\newtheorem{theorem}{Theorem}[section]
\newtheorem{lemma}[theorem]{Lemma}
\newtheorem{proposition}[theorem]{Proposition}
\newtheorem{corollary}[theorem]{Corollary}
\theoremstyle{definition}
\newtheorem{definition}[theorem]{Definition}
\newtheorem{remark}[theorem]{Remark}
\numberwithin{equation}{section}
\newcommand{\CC}{\mathbb C}
\newcommand{\HH}{\mathbb H}
\newcommand{\NN}{{\mathbb N}_0}
\newcommand{\PP}{\mathbb P}
\newcommand{\QQ}{\mathbb Q}
\newcommand{\RR}{\mathbb R}
\newcommand{\ZZ}{\mathbb Z}
\newcommand{\SL}{\mathop{\mathrm {SL}}\nolimits}
\newcommand{\Orth}{\mathop{\null\mathrm {O}}\nolimits}
\newcommand{\latt}[1]{{\langle{#1}\rangle}}
\newcommand{\orb}{\mathop{\mathrm {orb}}\nolimits}
\newcommand{\Borch}{\operatorname{Borch}}
\newcommand{\m}{\operatorname{mod}}
\newcommand{\w}{\operatorname{w}}
\newcommand{\cusp}{\operatorname{cusp}}
\tikzset{node distance=2em, ch/.style={circle,draw,on chain,inner sep=2pt},chj/.style={ch,join},line width=1pt,baseline=-1ex}
\let\dlabel=\alabel
\newcommand{\dnode}[2][chj]{%
\node[#1,label={below:\dlabel{#2}}] {};
}
\newcommand{\dnodebr}[1]{%
\node[chj,label={below right:\dlabel{#1}}] {};
}
\newcommand{\abs}[1]{\lvert#1\rvert}
\begin{document}

\title[Weyl invariant $E_8$ Jacobi forms]{Weyl invariant \texorpdfstring{$\boldsymbol{E_8}$}{E8} Jacobi forms}

\author{Haowu Wang}

\address{Max-Planck-Institut f\"{u}r Mathematik, Vivatsgasse 7, 53111 Bonn, Germany}

\email{haowu.wangmath@gmail.com}

\subjclass[2010]{11F50, 17B22, 11F55}

\date{\today}

\keywords{Jacobi forms, root system $E_8$, Weyl group, invariant theory.}

\begin{abstract}
We investigate the Jacobi forms for the root system $E_8$ invariant under the Weyl group. This type of Jacobi forms has significance in Frobenius manifolds, Gromov--Witten theory and string theory. In 1992, Wirthm\"{u}ller proved that the space of Jacobi forms for any irreducible root system not of type $E_8$ is a polynomial algebra.  But very little has been known about the case of $E_8$. In this paper we show that the bigraded ring of Weyl invariant $E_8$ Jacobi forms is not a polynomial algebra and prove that every such Jacobi form can be expressed uniquely as a polynomial in nine algebraically independent Jacobi forms introduced by Sakai with coefficients which are meromorphic $\SL_2(\ZZ)$ modular forms. The latter result implies that the space of Weyl invariant $E_8$ Jacobi forms of fixed index is a free module over the ring of $\SL_2(\ZZ)$ modular forms and that the number of generators can be calculated by a generating series. We determine and construct all generators of small index. These results give a proper extension of the Chevalley type theorem to the case of $E_8$.
\end{abstract}

\maketitle

\tableofcontents

\section{Introduction}
For the lattice constructed from the classical root system $R$, one can define the Jacobi forms which are invariant with respect to the Weyl group $W(R)$. Such Jacobi forms are called $W(R)$-invariant Jacobi forms. In this setting, the classical Jacobi forms due to Eichler and Zagier \cite{EZ} are actually the $W(A_1)$-invariant Jacobi forms. All $W(R)$-invariant weak Jacobi forms make up a bigraded ring graded by the weight and the index.  The problem on the algebraic structure of such bigraded ring was inspired by the work of E. Looijenga \cite{Loo76, Loo80} and K. Saito \cite{Sai90} on the invariants of generalized root systems. This problem is closely related to the theory of Frobenius manifolds (see \cite{Ber00a, Ber00b, Dub96, Dub98, Sat98}). The first solution to this problem was given by Wirthm\"{u}ller in 1992 \cite{Wi}, which stated that the bigraded ring of $W(R)$-invariant weak Jacobi forms is a polynomial algebra over $\CC$ except for the root system $E_8$. This result can be regarded as the Chevalley type theorem for affine root systems (see \cite{BS78, BS06}). But Wirthm\"{u}ller's solution was totally non-constructive because it did not give the construction of generators like in the case of $A_1$ considered in the book \cite{EZ} of Eichler-Zagier. On account of this defect, later Bertola \cite{Ber99} and Satake \cite{Sat98} reconsidered this problem and found explicit constructions of the generators for the root systems $A_n$, $B_n$, $G_2$, $D_4$ and $E_6$.

The case $R=E_8$ was not covered by Wirthm\"{u}ller's theorem and remains completely open for 27 years. In recent years, $W(E_8)$-invariant Jacobi forms appear in various contexts in mathematics and physics and have applications in Gromov-Witten theory and string theory. The original Seiberg-Witten curve for the $E$-string theory is expressed in terms of $W(E_8)$-invariant Jacobi forms \cite{Sa1}. The Gromov-Witten partition function $Z_{g;n}(\tau\lvert\mu)$ of genus $g$ and winding number $n$ of the local $B_9$ model is a $W(E_8)$-invariant quasi-Jacobi form of index $n$ and weight $2g-6+2n$ up to a factor \cite{M,MNVW}. The $\PP_1$-relative Gromov-Witten potentials of the rational elliptic surface are $W(E_8)$-invariant quasi-Jacobi forms numerically and the Gromov-Witten potentials of the Schoen Calabi-Yau threefold (relative to $\PP_1$) are $E_8 \times E_8$ quasi-bi-Jacobi forms \cite{OP}. But unfortunately, very little has been known about the ring of $W(E_8)$-invariant Jacobi forms. The purpose of this paper is to fill this gap in the literature.

Let $\tau \in \HH$ and $\mathfrak{z}\in E_8\otimes \CC$, a holomorphic function $\varphi(\tau,\mathfrak{z})$ is called a $W(E_8)$-invariant weak Jacobi form if it is invariant under the Jacobi group which is the semidirect product of $\SL_2(\ZZ)$ with the integral Heisenberg group of $E_8$, if it has a Fourier expansion of the form 
$$\varphi ( \tau, \mathfrak{z} )=\sum_{n\geq 0} \sum_{\ell\in E_8}f(n,\ell)e^{2\pi i (n\tau + (\ell,\mathfrak{z}))},$$
and if it satisfies $\varphi(\tau, \sigma(\mathfrak{z}))=\varphi(\tau, \mathfrak{z})$, for all $\sigma\in W(E_8)$.

Sakai \cite{Sa1} constructed nine independent $W(E_8)$-invariant holomorphic Jacobi forms, noted by $A_1$, $A_2$, $A_3$, $A_4$, $A_5$, $B_2$, $B_3$, $B_4$, $B_6$. The Jacobi forms $A_m$, $B_m$ are of weight $4$, $6$ and index $m$ respectively. Sakai \cite{Sa2} also conjectured that the number of generators of $W(E_8)$-invariant Jacobi forms of index $m$ coincides with the number of fundamental representations at level $m$. In this paper, we give an explicit mathematical description of his conjecture and prove it to be true. We first study the values of $W(E_8)$-invariant Jacobi forms at $q=e^{2\pi i \tau}=0$, which are called $q^0$-terms of Jacobi forms. We prove that the $q^0$-term of any $W(E_8)$-invariant Jacobi form can be written as a particular polynomial in terms of the Weyl orbits of eight fundamental weights of $E_8$ (see Lemma \ref{lem1}). We then deduce the following structure theorem which gives an extension of Wirthm\"{u}ller's theorem to the case of $E_8$.

\begin{theorem}[see Theorem \ref{MTH}] 
Let $t$ be a positive integer. Then the space $J_{*,E_8,t}^{\w ,W(E_8)}$ of $W(E_8)$-invariant weak Jacobi forms of index $t$ is a free module of rank $r(t)$ over the ring $M_*$ of $\SL_2(\ZZ)$ modular forms, where $r(t)$ is given by
\begin{equation*}
\frac{1}{(1-x)(1-x^2)^2(1-x^3)^2(1-x^4)^2(1-x^5)(1-x^6)}=\sum_{t\geq 0}r(t)x^t.
\end{equation*}
Equivalently, we have that
\begin{equation*}
J_{*,E_8,*}^{\w ,W(E_8)}=\bigoplus_{t\geq 0}J_{*,E_8,t}^{\w ,W(E_8)} \subsetneq \CC\left( E_4, E_6 \right) \left[A_1,A_2,A_3,A_4,A_5,B_2,B_3,B_4,B_6 \right],
\end{equation*}
and that $A_1$, $A_2$, $A_3$, $A_4$, $A_5$, $B_2$, $B_3$, $B_4$, $B_6$ are algebraically independent over $M_*$.
Here $\CC\left( E_4, E_6 \right)$ denotes the fractional field of $\CC[E_4,E_6]$.
\end{theorem}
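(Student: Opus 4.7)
The strategy rests on the $q^0$-term map $\varphi \mapsto \varphi|_{q=0}$, which by Lemma~\ref{lem1} lands in a very restricted space of Weyl-invariant polynomials on $E_8 \otimes \CC$. Observe that the exponents $1, 2, 2, 3, 3, 4, 4, 5, 6$ in the displayed generating series are exactly the indices of Sakai's nine Jacobi forms, so the series is the Hilbert series (graded by index) of a weighted polynomial ring in nine variables. This is the combinatorial shadow of the whole structure.

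I would begin by checking that the nine $q^0$-terms $p_{A_1}, \ldots, p_{B_6}$ of Sakai's forms are algebraically independent as polynomial functions on $E_8 \otimes \CC$, via a Jacobian computation on a suitable slice of the Cartan (or directly from their explicit description in terms of Weyl orbits of fundamental weights). Algebraic independence of these polynomials forces algebraic independence of $A_1, \ldots, B_6$ as Jacobi forms over $\CC$, and a Fourier-coefficient specialization promotes this to independence over $M_*$ and over $\CC(E_4, E_6)$. Combined with Lemma~\ref{lem1}, this shows that the $\CC$-vector space $V_t$ of all $q^0$-terms of weak $W(E_8)$-invariant Jacobi forms of index $t$ coincides with the degree-$t$ piece of $\CC[p_{A_1}, \ldots, p_{B_6}]$, which has dimension $r(t)$.

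Next, I would produce $r(t)$ weak Jacobi forms $\phi_1, \ldots, \phi_{r(t)}$ of index $t$ whose $q^0$-terms form a basis of $V_t$, built from monomials in $A_i, B_j$ of index $t$ together with appropriate products or ratios of $E_4$ and $E_6$ to reach the needed weights. To show these form a free $M_*$-basis of $J_{*,E_8,t}^{\w,W(E_8)}$, I proceed by a standard $\Delta$-induction: given $\varphi$, subtract a $\CC$-linear combination of the $\phi_i$ matching its $q^0$-term; the difference has vanishing $q^0$-term and so is divisible by the cusp form $\Delta$, yielding a weak Jacobi form of index $t$ and weight lowered by $12$, to which induction applies (the weight of weak Jacobi forms of fixed index is bounded below, so the recursion terminates). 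Linear independence over $M_*$ follows by running the same argument in reverse: the $q^0$-coefficient of a putative relation $\sum f_i \phi_i = 0$ forces each $f_i$ to be a cusp form, hence a multiple of $\Delta$, and induction on weight yields a contradiction. The polynomial expression with coefficients in $\CC(E_4, E_6)$ is then immediate from the free module structure together with algebraic independence, and the strict inclusion is clear since $\CC(E_4, E_6)[A_1, \ldots, B_6]$ contains elements with poles in $\tau$ (for example $E_4^{-1} A_1$) that are not holomorphic Jacobi forms.

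The main obstacle lies in the spanning step. Because $M_*$ has no nonzero elements of weight $2$, one cannot always kill the $q^0$-term of a weak Jacobi form of weight $k$ by an $M_*$-linear combination of monomials in $A_i, B_j$ of matching index --- the required weight-matching sometimes forces rational expressions in $E_4, E_6$ as coefficients. This is precisely the obstruction that prevents $J_{*,E_8,*}^{\w,W(E_8)}$ from being a polynomial algebra over $M_*$, and managing this weight-$2$ gap while maintaining the $\Delta$-induction is where the real work lies.
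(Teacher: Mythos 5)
Your overall architecture --- control $q^0$-terms via Lemma~\ref{lem1}, obtain the lower bound on the rank from algebraic independence of Sakai's nine forms, and obtain the upper bound by a $\Delta$-division argument --- matches the paper's, but two of your steps fail as written. The first is the proposed Jacobian computation on the $q^0$-terms $p_{A_1},\dots,p_{B_6}$: the $A_i$ and $B_j$ are \emph{holomorphic} Jacobi forms, so their $q^0$-terms are supported on $\ell=0$ (positive definiteness of $E_8$ forces $(\ell,\ell)\le 0\Rightarrow\ell=0$), and with Sakai's normalization all nine of these $q^0$-terms equal the constant $1$. They are therefore maximally algebraically \emph{dependent}, and no independence of the forms themselves can be extracted from them. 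The paper's actual route is to first manufacture nine auxiliary \emph{weak} Jacobi forms (polynomial expressions in the $A_i,B_j$ with denominators involving $\Delta$, following Sakai's $\alpha_{k,t}$) of indices $1,2,2,3,3,4,4,5,6$ whose $q^0$-terms are $1$ and the eight single Weyl orbits $\orb(w_1),\dots,\orb(w_8)$; the algebraic independence of these orbits over $\CC$ (Lemma~\ref{lem0}, from Bourbaki) then forces algebraic independence of the nine weak forms over $M_*$, hence of $A_1,\dots,B_6$. You need this detour, or some argument using higher Fourier--Jacobi coefficients; as literally proposed, your first step collapses.

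The second problem is the spanning step, which you leave unfinished: the ``weight-$2$ gap'' you flag at the end is not a loose end to be tidied later but exactly the point where a constructive $\Delta$-induction on an explicit list of monomials breaks down (indeed Theorem~\ref{polynomial} shows no polynomial basis exists). The paper never exhibits a basis in Theorem~\ref{MTH}. Freeness of $J_{*,E_8,t}^{\w,W(E_8)}$ is already supplied by Lemma~\ref{lem: module}, so only the rank needs an upper bound, and Lemma~\ref{lem2} gets it non-constructively: assuming the rank $R(t)$ exceeds $r(t)$, take a basis $\psi_i$ of weights $a_i$, set $a=\max a_i$, and form $E_{a+4-a_i}\psi_i$ --- every exponent $a+4-a_i$ is an even integer $\ge 4$, so the weight-$2$ obstruction never arises --- all of a single weight $a+4$. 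Their $q^0$-terms lie in a space of dimension at most $r(t)<R(t)$ by Lemma~\ref{lem1}, so some nontrivial combination is $O(q)$; dividing by $\Delta$ produces an element of the module that contradicts freeness (it would force each $E_{a+4-a_i}$ to be divisible by $\Delta$). Replacing your spanning argument by this counting argument is not optional polish; it is the missing proof of the upper bound.
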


It is well-known that $J_{*,E_8,1}^{\w ,W(E_8)}$ is generated over $M_*$ by the theta function of $E_8$. In this paper we further elucidate the structure of $J_{*,E_8,t}^{\w ,W(E_8)}$ for $t=2$, 3, 4. We next explain the main ideas.  The approach below can also be used to study the space of Jacobi forms associated to any positive-definite lattice.

Our main theorem says that the space $J_{*,E_8,t}^{\w ,W(E_8)}$ is a free module over $M_*$ and the number of generators is $r(t)$. It means that we only need to find all generators in order to characterize the structure of $J_{*,E_8,t}^{\w ,W(E_8)}$. We do this by analyzing the spaces of weak Jacobi forms of index $t$ and any non-positive weight.

We first construct many basic Jacobi forms of non-positive weight in terms of Jacobi theta functions. The main tool is the weight raising differential operators (see Lemma \ref{diffoperator}).

We then determine the dimension of the space of Jacobi forms of fixed index and any given negative weight. To do this, we study the orbits of $E_8$ vectors of fixed norm under the action of the Weyl group and represent $q^0$-terms of Jacobi forms as linear combinations of these orbits.  By means of the following two crucial facts, 
\begin{enumerate}
\item From a given Jacobi form of weight $k$, we can construct a Jacobi form of weight $k+2j$ for every positive integer $j$ by the differential operators. 
\item If one takes $\mathfrak{z}=0$, then the $q^0$-term of any Jacobi form of negative weight will be zero. In addition, the coefficients of $q^0$-term of a Jacobi form of weight zero satisfy a linear relation (see Lemma \ref{lf}).
\end{enumerate}
from a Jacobi form of negative weight with given $q^0$-term, we can build a certain system of linear equations defined by the coefficients of the orbits of $E_8$ vectors in $q^0$-terms of Jacobi forms. By solving the linear system, we can know if the above Jacobi form of negative weight with given $q^0$-term exists.  

The following theorem describes the structure of $J_{*,E_8,t}^{\w ,W(E_8)}$ for $t=2$, $3$, $4$.

\begin{theorem}[see Theorems \ref{Th index 2}, \ref{Th index 3}, \ref{Th index 4}]
\begin{align*}
J^{\w ,W(E_8)}_{*, E_8,2}&=M_* \langle \varphi_{-4,2},\varphi_{-2,2},\varphi_{0,2} \rangle,\\
J^{\w ,W(E_8)}_{*, E_8,3}&=M_* \langle \varphi_{-8,3}, \varphi_{-6,3}, \varphi_{-4,3},\varphi_{-2,3},\varphi_{0,3} \rangle,\\
J^{\w ,W(E_8)}_{*, E_8,4}&=M_* \langle \varphi_{-2k,4},\, 0\leq k \leq 8; \;\psi_{-8,4} \rangle,
\end{align*}
where $\varphi_{k,t}$ is a $W(E_8)$-invariant weak Jacobi form of weight $k$ and index $t$.
\end{theorem}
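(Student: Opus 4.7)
I plan to prove the three statements for $t=2,3,4$ in parallel, following the scheme outlined in the introduction. The strategy has three stages: construct the listed forms explicitly from theta functions and differential operators; verify they are $M_*$-linearly independent via their $q^0$-terms; and combine the rank formula $r(t)$ of Theorem~\ref{MTH} with a direct dimension count of $J^{\w,W(E_8)}_{k,E_8,t}$ for $k\leq 0$ to conclude that the listed forms exhaust the module.

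For the construction, the starting point is the pair of generators of $J^{\w,W(E_8)}_{*,E_8,1}$, namely the $E_8$ theta function $\varphi_{-4,1}$ (weight $-4$) together with its weight-$0$ companion. Products of these across indices summing to $t$ supply a large family of candidates, while the weight-raising differential operators of Lemma~\ref{diffoperator} climb from the lowest available weight up to weight $0$. The forms of lowest weight (weights $-4$, $-8$, $-16$ for $t=2,3,4$) and the extra weight-$(-8)$ form $\psi_{-8,4}$ at $t=4$ are built from theta products associated with orthogonal splittings of the $E_8$ lattice; $\psi_{-8,4}$ must be chosen so that its $q^0$-term is $\CC$-independent from that of $\varphi_{-8,4}$.

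Both $M_*$-linear independence and generation reduce to finite-dimensional linear algebra on $q^0$-terms. An $M_*$-relation $\sum f_i(\tau)\varphi_{k_i,t}=0$, restricted to its lowest $q$-order and to the lowest-weight summands, becomes a $\CC$-linear relation among polynomials in Weyl orbits of $E_8$ fundamental weights (Lemma~\ref{lem1}); exhibiting upper-triangular values of those polynomials on well-chosen $W(E_8)$-orbits of $E_8$ vectors of norm $\leq 2t$ excludes it. For generation, the rank formula gives $r(2)=3$, $r(3)=5$, $r(4)=10$, matching the count of listed candidates, so it suffices to show that $\dim_\CC J^{\w,W(E_8)}_{k,E_8,t}$ agrees with the Hilbert--Poincar\'e series of the $M_*$-submodule generated by the candidates. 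The needed dimensions at non-positive weights are extracted by the $q^0$-term method: the weight-raising differential operators link weight-$k$ and weight-$(k+2j)$ components, while the two constraints at $\mathfrak{z}=0$ (vanishing of the $q^0$-term for negative weight and the linear relation on weight-$0$ coefficients, Lemma~\ref{lf}) produce a finite-rank linear system on the coefficients in the Weyl-orbit basis whose solution space fixes the dimension.

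The main obstacle will be the case $t=4$ at weight $-8$: the analysis must yield $\dim_\CC J^{\w,W(E_8)}_{-8,E_8,4}=5$, so that after subtracting $E_4\varphi_{-12,4}$, $E_6\varphi_{-14,4}$, $E_4^2\varphi_{-16,4}$ inherited from lower-weight generators, exactly two new generators $\varphi_{-8,4}$ and $\psi_{-8,4}$ survive. This requires an accurate enumeration of the $W(E_8)$-orbits of $E_8$ vectors of norm at most $8$ and an exact rank computation for the associated linear system, which is combinatorially substantially heavier than at $t=2,3$.
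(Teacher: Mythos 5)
Your overall architecture is the same as the paper's: use the rank formula $r(t)$ from Theorem \ref{MTH}, bound $\dim J^{\w ,W(E_8)}_{k,E_8,t}$ for $k\leq 0$ by turning $q^0$-terms in the Weyl-orbit basis into a linear system via the weight-raising operators, the vanishing of $[\varphi]_{q^0}(\tau,0)$ in negative weight, and Lemma \ref{lf} at weight zero, and then exhibit explicit forms realizing those dimensions. That is exactly how Sections 5.2--5.4 proceed, including your correct observation that the hard point at $t=4$ is forcing two independent weight $-8$ generators $\varphi_{-8,4}$ and $\psi_{-8,4}$, distinguished by the two orbits $\sum_{8'}$ and $\sum_{8''}$ of norm-$8$ vectors.

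There is, however, a genuine error in your construction stage. You start from ``the pair of generators of $J^{\w,W(E_8)}_{*,E_8,1}$, namely the $E_8$ theta function $\varphi_{-4,1}$ (weight $-4$) together with its weight-$0$ companion.'' No such forms exist: $J^{\w ,W(E_8)}_{*,E_8,1}$ is free of rank one over $M_*$, generated by $\vartheta_{E_8}$, which has weight $+4$; since weak and holomorphic Jacobi forms coincide at index $1$ and the singular weight is $4$, there are no index-$1$ forms of weight below $4$ at all. You appear to be importing the Eichler--Zagier picture for $A_1$ (generators $\phi_{-2,1},\phi_{0,1}$), which fails here. Consequently ``products across indices summing to $t$'' can only produce forms of weight $\geq 4t$ and cannot supply any of the negative-weight generators. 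The actual constructions must divide by powers of $\Delta$ (or $\eta$): e.g.\ $\varphi_{-4,2}=\bigl(\vartheta_{E_8}^2-\tfrac19 E_4(\vartheta_{E_8}\lvert T_-(2))\bigr)/\Delta$, and $\varphi_{-16,4}$ is a $W(E_8)$-symmetrized product of eight theta factors divided by $\Delta^2$. This is not a cosmetic repair: to divide by $\Delta$ one must prove the numerator is $O(q)$, and at index $3$ the existence of $\varphi_{-8,3}$ hinges on showing that a specific weight-$4$ combination is $O(q)$ yet not identically zero (the paper rules out vanishing by deriving a contradiction with the nonexistence of weight $-12$ forms of index $3$). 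Your plan as written gives no mechanism for producing these forms, so the existence half of the argument --- without which the linear systems only yield upper bounds on dimensions --- is unsupported.
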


Combining the above two theorems together, we prove that the bigraded ring $J^{\w ,W(E_8)}_{*,E_8,*}$ is in fact not a polynomial algebra over $M_*$ and its structure is rather complicated (see Theorem \ref{polynomial}).  It means that the Chevalley type theorem does not hold for $E_8$.

Furthermore, we apply our results on weak Jacobi forms to study the structures of the modules of $W(E_8)$-invariant holomorphic and cusp Jacobi forms of index $2$, $3$, $4$. Our main results are as follows.

\begin{theorem}[see Theorems \ref{Th index 2}, \ref{Th index 3h}, \ref{Th index 4h}]
\begin{align*}
J^{W(E_8)}_{*, E_8,2}&=M_* \langle A_2, B_2, A_1^2 \rangle,\\
J^{W(E_8)}_{*, E_8,3}&=M_* \langle A_3, B_3, A_1A_2, A_1B_2, A_1^3 \rangle,
\end{align*}
and $J^{W(E_8)}_{*, E_8,4}$ is generated over $M_*$ by two Jacobi forms of weight $4$, two Jacobi forms of weight $6$, three Jacobi forms of weight $8$, two Jacobi forms of weight $10$ and one Jacobi form of weight $12$.
\end{theorem}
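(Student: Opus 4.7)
The plan is to extract the holomorphic submodule of $J^{\w,W(E_8)}_{*,E_8,t}$ for $t\in\{2,3,4\}$ by combining the weak Jacobi form classification (the preceding theorem) with a careful analysis of the $q^0$-term. The decisive observation is that, because the $E_8$ lattice is positive definite, the holomorphicity condition $2nt\geq (\ell,\ell)$ already at $n=0$ forces $\ell=0$; hence the $q^0$-term of any holomorphic Jacobi form is a constant in $\mathfrak{z}$. Conversely, invariance of the Fourier coefficients under the integral Heisenberg group of $E_8$ propagates the vanishing at $n=0$ to all $(n,\ell)$ with $2nt<(\ell,\ell)$, so a weak $W(E_8)$-invariant Jacobi form is holomorphic if and only if its $q^0$-term is independent of $\mathfrak{z}$.

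For each fixed index $t$, I would decompose a generic $\varphi\in J^{\w,W(E_8)}_{*,E_8,t}$ in the weak basis, writing $\varphi=\sum_i f_i\,\varphi_{-2k_i,t}$ with $f_i\in M_*$ of appropriate weight. Its $q^0$-term is $\sum_i f_i(\infty)\,[\varphi_{-2k_i,t}]_{q^0}(\mathfrak{z})$, and by Lemma \ref{lem1} each $[\varphi_{-2k_i,t}]_{q^0}$ is an explicit polynomial in Weyl orbit sums of fundamental weights of $E_8$. Demanding that the total be constant in $\mathfrak{z}$ becomes a finite linear system in the constants $f_i(\infty)$, whose solution set is exactly the weight component of $J^{W(E_8)}_{*,E_8,t}$.

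For $t=2,3$ the proposed generators $A_2,B_2,A_1^2$ (resp.\ $A_3,B_3,A_1A_2,A_1B_2,A_1^3$) are monomials in Sakai's nine holomorphic forms and so are automatically holomorphic of the claimed weights $4,6,8$ (resp.\ $4,6,8,10,12$). To conclude the equalities I would express each monomial in the weak basis, assemble the resulting transition matrix over $M_*$, and verify that the induced map $M_*^r\to J^{\w,W(E_8)}_{*,E_8,t}$ is injective with image coinciding with the solution module of the $q^0$-term linear system.

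The main obstacle lies in $t=4$. Enumerating index-$4$ monomials in Sakai's generators by weight yields $1,1,2,2,2,1,1$ at weights $4,6,8,10,12,14,16$, whereas the theorem demands the distribution $2,2,3,2,1$ at weights $4,6,8,10,12$; hence certain monomials at weight $\geq 12$ must become $M_*$-dependent, while extra holomorphic generators must appear at weights $4,6,8$. Constructing those extra generators requires identifying $M_*$-linear combinations of index-$4$ Sakai monomials that are divisible by specific modular forms (for instance by $E_4$ or $E_6$), so that the quotient is a genuine holomorphic Jacobi form at the lower weight; this is the most delicate step, and the one that most directly uses the fine structure of the $q^0$-terms. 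Once the ten generators are produced, their independence and the spanning of $J^{W(E_8)}_{*,E_8,4}$ follow, as in the previous cases, by matching the induced map $M_*^{10}\to J^{\w,W(E_8)}_{*,E_8,4}$ against the solution module of the $q^0$-term system.
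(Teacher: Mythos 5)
The criterion on which your entire argument rests --- that a weak $W(E_8)$-invariant Jacobi form of index $t$ is holomorphic if and only if its $q^0$-term is constant in $\mathfrak{z}$ --- is false for $t\geq 3$, which is precisely where the theorem requires work. The ``conversely'' direction fails: by Lemma \ref{lem2.3} the coefficient $f(n,\ell)$ depends only on the coset $\ell+tE_8$ and on the value $2nt-(\ell,\ell)$, so vanishing of the non-constant part of the $q^0$-term forces $f(n,\ell)=0$ for $2nt-(\ell,\ell)=-N<0$ only when $N$ is actually realized as the norm of a vector in the coset $\ell+tE_8$. Once $t\geq 3$ there are cosets whose minimal norm exceeds $2t$ (Lemma \ref{lem2.4}: the maximum of these minimal norms is $8$ for $t=3$ and $16$ for $t=4$), and the corresponding negative-hyperbolic-norm coefficients first appear in the $q^1$-term or later, invisible at $q^0$. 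Concretely, for $t=3$ the coset of $w_2$ has minimal norm $8>6=2t$, so $f(1,\ell)$ with $(\ell,\ell)=8$ is unconstrained by the $q^0$-term even though $2nt-(\ell,\ell)=-2<0$; this is exactly why the paper's Lemma \ref{Lemtest} characterizes holomorphicity by constancy of the $q^0$-term \emph{together with} the absence of $\sum_{8'}$ from the $q^1$-term (and, for $t=4$, further conditions on the $q^1$- and $q^2$-terms). An explicit counterexample to your criterion is $\Delta\varphi_{-8,3}$: its $q^0$-term is identically zero, yet it is not holomorphic, since $J^{W(E_8)}_{4,E_8,3}=\CC A_3$ is one-dimensional and spanned by a form with nonzero constant term. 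Consequently your ``solution module of the $q^0$-term linear system'' strictly contains $J^{W(E_8)}_{*,E_8,t}$ for $t=3,4$, and the proposed identification of the holomorphic submodule does not go through.

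Your criterion is correct for $t\leq 2$ (part (1) of Lemma \ref{Lemtest}), so the index-$2$ statement would survive; the paper gets it even more quickly from the rank count, the singular-weight bound $J^{W(E_8)}_{k,E_8,2}=\{0\}$ for $k<4$, and the independence of $A_1^2$ from $E_4A_2$ (or from the isomorphism $M_k(\Gamma_0(2))\cong J^{W(E_8)}_{k+4,E_8,2}$ of Corollary \ref{coro:index2}). For $t=3,4$ the paper instead proves $\dim J^{W(E_8)}_{2k,E_8,t}=\dim J^{\w ,W(E_8)}_{2k,E_8,t}-5$ (resp.\ $-13$) for $k\geq 2$ by exhibiting an explicit complement of weak forms containing no nonzero holomorphic form, and then writes down the generators directly: $A_3,B_3,A_2\vartheta_{E_8},B_2\vartheta_{E_8},\vartheta_{E_8}^3$ for $t=3$, and for $t=4$ the forms $A_4$, $\Delta\psi_{-8,4}$, $B_4$, $\Delta\varphi_{-6,4}$, $C_{8,4}$, $\Delta\varphi_{-4,4}$, $\Delta^2\varphi_{-16,4}$, $\Delta\varphi_{-2,4}$, $\Delta^2\varphi_{-14,4}$, $\Delta^2\varphi_{-12,4}$. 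Note that the extra low-weight generators at index $4$ arise by multiplying negative-weight weak generators by powers of $\Delta$, not by dividing monomials in Sakai's forms by $E_4$ or $E_6$ as you suggest; your proposal only announces that such generators ``must appear'' without producing them, so even apart from the flawed criterion the $t=4$ case is left unproved.
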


\begin{theorem}[see Theorems \ref{Th index 2}, \ref{thcusp3}, \ref{Th index 4h}]
Let $t=2$, $3$ or $4$. The numbers of generators of indicated weight of $J^{\cusp ,W(E_8)}_{*, E_8,t}$ are shown in Table \ref{tablecusp}.
\end{theorem}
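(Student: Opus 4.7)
The plan is to leverage the explicit structure of $J^{W(E_8)}_{*,E_8,t}$ for $t=2,3,4$ given by the preceding theorem, and then cut out the cusp submodule by imposing the standard vanishing condition on boundary Fourier coefficients.

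Recall that a holomorphic $W(E_8)$-invariant Jacobi form $\varphi=\sum c(n,\ell)\,q^{n}e^{2\pi i(\ell,\mathfrak{z})}$ of index $t$ is a cusp form precisely when $c(n,\ell)=0$ whenever $2nt=(\ell,\ell)$. Because the Fourier coefficients are $W(E_8)$-invariant and, by the Jacobi transformation law, depend only on the discriminant $2nt-(\ell,\ell)$ together with the class of $\ell$ in $E_8/tE_8$, this cusp condition amounts to one linear equation per $W(E_8)$-orbit of $E_8/tE_8$ that admits a representative of norm $\equiv 0\pmod{2t}$. For each $t\in\{2,3,4\}$, enumerating these orbits is a finite combinatorial problem on $E_8$ and can be read off from the known orbit structure of $E_8/tE_8$.

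I would then proceed in three steps. First, using the explicit descriptions of the Sakai generators $A_m,B_m$ and the products listed in the previous theorem for $J^{W(E_8)}_{*,E_8,t}$, I would compute, for each generator, the value of the boundary Fourier coefficient attached to each orbit identified above; thanks to the known Fourier expansions this is purely mechanical. Second, in each weight $k$ I would compute $\dim J^{\cusp,W(E_8)}_{k,E_8,t}$ as $\dim J^{W(E_8)}_{k,E_8,t}$ minus the rank, at weight $k$, of the boundary-coefficient linear map. Third, I would determine the weights at which minimal $M_*$-module generators of $J^{\cusp,W(E_8)}_{*,E_8,t}$ appear: a weight $k$ contributes as many new generators as the excess of $\dim J^{\cusp,W(E_8)}_{k,E_8,t}$ over the dimension of the weight-$k$ piece of the $\CC[E_4,E_6]$-submodule already generated by cusp forms of strictly smaller weight. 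Tabulating these excess dimensions for each $t\in\{2,3,4\}$ fills in Table \ref{tablecusp}.

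The principal obstacle is the last step, where one must verify minimality by ruling out, for each candidate new generator of weight $k$, that it already lies in the $\CC[E_4,E_6]$-span of cusp generators of weight $<k$. Because all generators have explicit Fourier expansions, this reduces to a finite linear-algebra computation, but the bookkeeping across weights for each of the three values of $t$ is the delicate part and must be carried out case by case.
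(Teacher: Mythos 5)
Your proposal follows essentially the same route as the paper: Lemma \ref{Lemtest} is exactly the orbit-by-orbit boundary-coefficient characterization you describe, and the paper likewise obtains the cusp dimensions by showing that the boundary map has constant rank ($2$, $2$, $4$ for $t=2,3,4$) in all sufficiently large weights and then reads off the generator weights from the resulting dimension counts. The one ingredient you leave implicit is that $J^{\cusp ,W(E_8)}_{*, E_8,t}$ is a free $M_*$-module of the same rank as the weak module (Lemma \ref{lem: module} together with Theorem \ref{MTH}); this is what guarantees that your \emph{excess dimension} at each weight really equals the number of new minimal generators and tells you at which weight the process terminates.
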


\begin{table}[ht]
\caption{Number of generators of $J^{\cusp ,W(E_8)}_{*, E_8,t}$}\label{tablecusp}
\renewcommand\arraystretch{1.5}
\noindent\[
\begin{array}{|c|c|c|c|c|c|}
\hline 
\text{weight} & 8 & 10 & 12 & 14 & 16 \\ 
\hline 
t=2 & 0 & 0 & 1 & 1 & 1 \\ 
\hline 
t=3 & 0 & 1 & 2 & 1 & 1 \\ 
\hline 
t=4 & 1 & 2 & 3 & 2 & 2 \\ 
\hline 
\end{array} 
\]
\end{table}

The plan of this paper is as follows. In \S \ref{Sec:2} we recall the definition of Weyl invariant Jacobi forms and Wirthm\"{u}ller's structure theorem. In \S \ref{Sec:3} we introduce some basic facts about root lattice $E_8$ and Jacobi forms, as well as several techniques to construct Jacobi forms. \S \ref{Sec:4}  is devoted to our first main theorem and its proof. In \S \ref{Sec:5} we focus attention on $W(E_8)$-invariant Jacobi forms of small index. We ascertain the structure of $J^{\w ,W(E_8)}_{*, E_8,t}$ for $t=2,3,4$ and construct all generators. We also present two isomorphisms between the spaces of weak Jacobi forms for lattices of different types, which give new descriptions of $J^{\w ,W(E_8)}_{*, E_8,2}$ and $J^{\w ,W(E_8)}_{*, E_8,3}$. Besides, we develop an approach based on pull-backs of Jacobi forms to discuss the possible minimum weight of generators of $J^{\w ,W(E_8)}_{*, E_8,t}$ for $t=5$ and $6$. In \S \ref{Sec:6} one application is given. We estimate the dimension of the space of modular forms for the orthogonal group $\Orth^{+}(2U\oplus E_8(-1))$ and give it an upper bound using our theory of $W(E_8)$-invariant Jacobi forms.  This upper bound almost coincides with the exact dimension obtained by \cite{HU}.

We close the introduction with many general notations.  Let $\ZZ$ and $\NN$ denote the sets of integers and non-negative integers, respectively. Let $(\cdot , \cdot)$ denote the standard scalar product on $\RR^n$.  The ring of $\SL_2(\ZZ)$ modular forms is always denoted by $M_*$. The function $\Delta=\eta^{24}$ denotes the holomorphic cusp form of weight $12$ on $\SL_2(\ZZ)$. We denote the Eisenstein series of weights $4$ and $6$ on $\SL_2(\ZZ)$ by $E_4$ and $E_6$, respectively.  The symbol $E_8$ always stands for the root system of type $E_8$ and we use $E_4^2$ to represent the Eisenstein series of weight $8$.

\section{Background: Weyl invariant Jacobi forms}\label{Sec:2}
In this section we define the Weyl invariant Jacobi forms and recall Wirthm\"{u}ller's structure theorem.  Let $R$ be an irreducible root system of rank $r$. Then $R$ is one of the following types (see \cite{B})
\begin{align*}
&A_n (n\geq 1),& &B_n (n\geq 2),& &C_n (n\geq 2),& &D_n (n\geq 3),& &E_6,& &E_7,& &E_8,& &G_2,& &F_4.&
\end{align*}
Let $L(R)$ be the root lattice generated by $R$. When $L(R)$ is an odd lattice, we equip $L(R)$ with the new bilinear form $2(\cdot,\cdot) $ rescaled by 2. We denote the normalized bilinear form of $L(R)$ by $\latt{\cdot,\cdot}$.  In what follows, let $L(R)^*$ denote the dual lattice of $L(R)$ and $W(R)$ denote the Weyl group of $R$. As $L(R)$ are now even positive-definite lattices, we can define $W(R)$-invariant Jacobi forms with respect to $L(R)$ in the following way.

\begin{definition}\label{def}
Let $\varphi : \HH \times (L(R) \otimes \CC) \rightarrow \CC$ be a holomorphic function and $k\in \ZZ$, $t\in \NN$. If $\varphi$ satisfies the following properties
\begin{itemize}
\item Weyl invariance:
\begin{equation}\label{def1}
\varphi(\tau, \sigma(\mathfrak{z}))=\varphi(\tau, \mathfrak{z}), \quad \sigma\in W(R),
\end{equation}
\item Quasi-periodicity:
\begin{equation}
\varphi (\tau, \mathfrak{z}+ x \tau + y)= \exp\left(-t\pi i [ \latt{x,x}\tau +2\latt{x,\mathfrak{z}} ]\right) \varphi ( \tau, \mathfrak{z} ), \quad x,y\in L(R),
\end{equation}
\item Modularity:
\begin{equation}
\varphi \left( \frac{a\tau +b}{c\tau + d},\frac{\mathfrak{z}}{c\tau + d} \right) = (c\tau + d)^k \exp\left( t\pi i \frac{c\latt{\mathfrak{z},\mathfrak{z}}}{c \tau + d}\right) \varphi ( \tau, \mathfrak{z} ), \quad \left( \begin{array}{cc}
a & b \\ 
c & d
\end{array} \right)   \in \SL_2(\ZZ),
\end{equation}
\item $\varphi ( \tau, \mathfrak{z} )$ admits a Fourier expansion of the form
\begin{equation}\label{def4}
\varphi ( \tau, \mathfrak{z} )= \sum_{ n\in \NN }\sum_{ \ell \in L(R)^*}f(n,\ell)e^{2\pi i (n\tau + \latt{\ell,\mathfrak{z}})},
\end{equation}
\end{itemize}
then $\varphi$ is called a $W(R)$-invariant weak Jacobi form of weight $k$ and index $t$. If for any $2nt - \latt{\ell,\ell} <0$ we have $f(n,\ell)= 0$, then $\varphi$ is called a $W(R)$-invariant holomorphic Jacobi form.  If for any $2nt - \latt{\ell,\ell} \leq 0$ we have $f(n,\ell)= 0$, then $\varphi$ is called a $W(R)$-invariant Jacobi cusp form.
We denote by 
$$
J^{\w ,W(R)}_{k,L(R),t} \supsetneq  J^{W(R)}_{k,L(R),t} \supsetneq J^{\cusp ,W(R)}_{k,L(R),t}
$$
the vector spaces of $W(R)$-invariant weak, holomorphic and cusp Jacobi forms of weight $k$ and index $t$, respectively.
\end{definition}

We next introduce many notations about root systems following \cite{B}. The dual root system of $R$ is defined as 
\begin{equation}
R^\vee=\{ r^\vee: r\in R \},
\end{equation}
where $r^\vee=\frac{2}{(r,r)}r$ is the coroot of $r$.
The weight lattice of $R$ is defined as
\begin{equation}
\Lambda(R)=\left\{ v\in R\otimes \QQ: (r^\vee, v)\in \ZZ,\; \forall\; r \in R\right\}.
\end{equation}
Let $\widetilde{\alpha}$ denote the highest root of $R^\vee$. In 1992,  Wirthm\"{u}ller  proved the following theorem.

\begin{theorem}[see Theorem 3.6 in \cite{Wi}]\label{thwi}
If $R$ is not of type $E_8$, then the bigraded ring of $W(R)$-invariant weak Jacobi forms over the ring of $\SL_2(\ZZ)$ modular forms is the polynomial algebra in $r+1$ basic $W(R)$-invariant weak Jacobi forms of weight $-k(j)$ and index $m(j)$
\begin{equation*}
\varphi_{-k(j),m(j)}(\tau,\mathfrak{z}), \quad j=0, 1,...,r.
\end{equation*}
Apart from $k(0)=0$ and $m(0)=1$, the indices $m(j)$ are the coefficients of $\widetilde{\alpha}^\vee$  written as a linear combination of the simple roots of $R$. The integers $k(j)$ are the degrees of the generators of the ring of $W(R)$-invariant polynomials and also the exponents of the Weyl group $W(R)$ increased by $1$.
\end{theorem}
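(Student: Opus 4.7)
The plan is to mirror the classical Chevalley--Shephard--Todd strategy, lifted to the Jacobi-form setting. The core object I would exploit is the ``$q^0$-term'' homomorphism
$$
\Phi \colon J^{\w,W(R)}_{*,L(R),*} \longrightarrow \bigoplus_{t\geq 0} A_{R,t}, \qquad \varphi \mapsto \varphi(\tau,\mathfrak{z})\vert_{q=0},
$$
where $A_{R,t}$ denotes the space of $W(R)$-invariant holomorphic functions on $L(R)\otimes\CC$ that are quasi-periodic of index $t$; equivalently, the $W(R)$-invariant combinations of level-$t$ affine characters. By the Bourbaki--Looijenga theorem (an affine analogue of Chevalley--Shephard--Todd), $\bigoplus_{t\geq 0} A_{R,t}$ is a polynomial algebra on $r+1$ generators whose indices are exactly the $m(j)$ appearing in the theorem, with $m(0)=1$ corresponding to the trivial level-one character.

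First I would construct, for each $j=0,\dots,r$, an explicit $W(R)$-invariant weak Jacobi form $\varphi_{-k(j),m(j)}$ whose $q^0$-term realises a chosen polynomial generator of $A_{R,m(j)}$. For the classical families $A_n$, $B_n$, $C_n$, $D_n$ these are assembled from theta series of the corresponding fundamental weights; for $G_2$, $F_4$, $E_6$, $E_7$ one may average theta series over the Weyl orbit of a fundamental weight or invoke affine-character formulas. The target weight $-k(j)$ is forced on us by the modular transformation behaviour of the underlying theta series, and the match with the exponents of $W(R)$ plus one is where the statement about the degrees of $W(R)$-invariant polynomials enters.

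Second, I would lift this $q^0$-level result to full Jacobi forms by induction on the $q$-order. Given $\varphi\in J^{\w,W(R)}_{k,L(R),t}$, polynomiality of the target ring yields a polynomial $P$ in the $\varphi_{-k(j),m(j)}$ with coefficients in $M_*=\CC[E_4,E_6]$, of total weight $k$ and index $t$, with $\Phi(\varphi)=\Phi(P)$. Then $\varphi-P$ vanishes at $q=0$, so $(\varphi-P)/\Delta \in J^{\w,W(R)}_{k-12,L(R),t}$, and induction on weight closes the argument. Algebraic independence of the generators over $M_*$ follows from algebraic independence of their $q^0$-terms in the polynomial ring $\bigoplus_t A_{R,t}$: any putative relation, after clearing factors of $\Delta$, would descend to a nontrivial polynomial relation at $q=0$, contradicting Bourbaki--Looijenga.

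The main obstacle is Step~1: producing a weak Jacobi form of the prescribed minimal weight $-k(j)$ with the correct $q^0$-term for every generator, together with the quantitative match between the $k(j)$ and the exponents of $W(R)$. This is precisely the point at which $E_8$ breaks down. For all other root systems, the ``short'' fundamental representations supply low-weight weak Jacobi forms compatible with the polynomial structure, but for $E_8$ every fundamental representation is large and no weak Jacobi form of the minimal weight demanded by the exponents exists; consequently the polynomiality over $M_*$ fails, which is exactly the phenomenon the rest of the paper is devoted to analysing.
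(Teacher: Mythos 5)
This theorem is not proved in the paper: it is quoted verbatim from Wirthm\"uller \cite{Wi}, whose actual argument is non-constructive and geometric (the introduction stresses precisely this point). The closest the paper comes to a proof is the Remark following Theorem \ref{MTH}, which pursues exactly your $q^0$-term strategy via the analogues of Lemmas \ref{lem1} and \ref{lem2}; but note that the author is careful to claim only the statement about the \emph{indices} $m(j)$ as actually provable this way, and presents the statement about the \emph{weights} $k(j)$ (their identification with the exponents of $W(R)$ increased by $1$) only as a heuristic based on Taylor expansions at $\mathfrak{z}=0$ and the Chevalley generators of $\CC[\mathfrak{z}]^{W(R)}$. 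Your proposal has the same asymmetry but does not acknowledge it: your Step~1, the existence of a weak Jacobi form of weight exactly $-k(j)$ with prescribed $q^0$-term, is the entire content of the hard half of the theorem, and ``the target weight is forced on us by the modular transformation behaviour of the underlying theta series'' is not an argument. Theta series have positive (singular) weight $r/2$; negative weights arise only after dividing by powers of $\Delta$ or $\eta$, and controlling the resulting weight --- i.e., proving the requisite vanishing orders --- is precisely where the exponents must enter. Nothing in your sketch supplies that mechanism.

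There is a second, independent gap in your Step~2. To realize $\Phi(\varphi)=\Phi(P)$ with $P$ a polynomial of homogeneous weight $k$ over $M_*$, each monomial $\prod_j\varphi_{-k(j),m(j)}^{a_j}$ (of weight $-\sum a_jk(j)\le 0$ and index $t$) must be multiplied by a holomorphic modular form of weight $k+\sum a_jk(j)$ with a prescribed nonzero constant term. When that weight equals $2$ no such form exists; this already occurs for $R=A_1$, $k=0$, $t=2$ with the monomial $\varphi_{0,1}\varphi_{-2,1}$, and one must separately prove that the offending coefficient in the $q^0$-expansion vanishes (this is exactly the role of the linear constraint of Lemma \ref{lf} in the paper's index-by-index analysis). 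Without closing this, your subtract-and-divide-by-$\Delta$ induction only yields coefficients in $M_*[\Delta^{-1}]$ --- the analogue of what Theorem \ref{MTH} actually establishes for $E_8$ --- and not polynomiality over $M_*$. Finally, your closing diagnosis of the $E_8$ failure is not what the paper proves: non-polynomiality for $E_8$ (Theorem \ref{polynomial}) is established by exhibiting an explicit algebraic relation among $\vartheta_{E_8}$, $\varphi_{-4,2}$, $\varphi_{-2,2}$, $\varphi_{0,2}$, while the \emph{index} pattern of Wirthm\"uller's theorem is shown to persist for $E_8$.
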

 
We formulate the weights and indices of these generators in Table \ref{tablewi}. For a lattice $L$, we write $\Orth(L)$ for the integral orthogonal group of $L$. It is known that $A_3\cong D_3$. Since $C_2$ is isomorphic to $B_2$ via scaling by $\sqrt{2}$ and a $45$ degree rotation, the spaces of Jacobi forms for them are isomorphic.   Note that $W(C_n)/W(D_n)=\ZZ/2\ZZ$ and $W(C_n)=\Orth(D_n)$ if $n\neq 4$. Remark that the $W(A_1)$-invariant Jacobi forms are the classical Jacobi forms in the sense of Eichler and Zagier \cite{EZ}. The generators for root systems of types $A_n$, $B_n$ and $D_4$ were constructed in \cite{Ber99}. The generators for root systems $E_6$ and $E_7$ can be found in \cite{Sa2, Sat98}.

\begin{table}[ht]
\caption{Weights and indices of generators of Weyl invariant weak Jacobi forms ($B_n: n\geq 2$, $C_n: n\geq 3$, $D_n: n\geq 4$)}\label{tablewi}
\renewcommand\arraystretch{1.5}
\noindent\[
\begin{array}{|c|c|c|c|}
\hline 
R & L(R) & W(R) & (k(j),m(j)) \\ 
\hline 
A_n & A_n & W(A_n) & (0,1), (j,1) : 2\leq j\leq n+1\\ 
\hline 
B_n & nA_1 & \Orth(nA_1) & (2j,1) : 0\leq j \leq n  \\ 
\hline 
C_n & D_n & W(C_n) & (0,1), (2,1), (4,1), (2j,2) : 3\leq j \leq n  \\ 
\hline 
D_n & D_n & W(D_n) &  (0,1), (2,1), (4,1), (n,1), (2j,2) : 3\leq j \leq n-1 \\ 
\hline
E_6 & E_6 & W(E_6) & (0,1), (2,1), (5,1), (6,2), (8,2), (9,2), (12,
3)  \\ 
\hline
E_7 & E_7 & W(E_7) & (0,1), (2,1), (6,2), (8,2), (10,2), (12,
3), (14,3), (18,4)  \\ 
\hline
G_2 & A_2 & \Orth(A_2) & (0,1), (2,1), (6,2)  \\ 
\hline
F_4 & D_4 & \Orth(D_4) &  (0,1), (2,1), (6,2), (8,2), (12,3) \\ 
\hline
\end{array} 
\]
\end{table}

Wirthm\"{u}ller's theorem does not cover the case $R=E_8$. In the next sections, we will focus on $W(E_8)$-invariant Jacobi forms and present a proper extension of Wirthm\"{u}ller's theorem to this case.

\section{ \texorpdfstring{$W(E_8)$}{W(E8)}-invariant Jacobi forms}\label{Sec:3}

In this section we give a brief overview of root lattice $E_8$ and introduce many useful facts about $W(E_8)$-invariant Jacobi forms.

\subsection{Notations and basic properties} 

We first introduce the Jacobi theta functions. Let $q=e^{2\pi i \tau}$ and $\zeta=e^{2\pi i z}$, where $\tau \in \HH$ and $z\in \CC$. The Jacobi theta functions of level two
(see \cite[Chapter 1]{Mu}) are defined as 
\begin{align*}
&\vartheta_{00}(\tau,z)=
\sum_{n\in \ZZ}q^{\frac{n^2}{2}}\zeta^n,& 
&\vartheta_{01}(\tau,z)=\sum_{n\in \ZZ}(-1)^nq^{\frac{n^2}{2}}\zeta^n,\\
&\vartheta_{10}(\tau,z)=q^{\frac 1{8}}\zeta^{\frac 1{2}}
\sum_{n\in \ZZ}q^{\frac{n(n+1)}{2}}\zeta^{n},&  &
\vartheta_{11}(\tau,z)=iq^{\frac 1{8}}\zeta^{\frac 1{2}}
\sum_{n\in \ZZ}(-1)^nq^{\frac{n(n+1)}{2}}\zeta^{n}.
\end{align*}
The Jacobi triple product formula
\begin{equation}
\vartheta(\tau,z) = 
 -q^{\frac{1}{8}} \zeta^{-\frac{1}{2}} \prod_{n= 1}^\infty (1-q^{n-1}\zeta)(1-q^n \zeta^{-1})(1-q^n) 
\end{equation}
defines a holomorphic Jacobi form of  weight $\frac{1}{2}$ and index $\frac{1}{2}$
with a multiplier system of order $8$ (see \cite{GN}). It plays an important role in the constructions of basic Jacobi forms. Note that $\vartheta(\tau,z) =-i\vartheta_{11}(\tau,z)$ and $\ZZ\tau+\ZZ$ is the set of its simple zeros. 

We next recall some standard facts about root lattice $E_8$. For a more careful treatment of this important lattice, we refer to \cite{B, SC}.  The lattice $E_8$ is the unique positive-definite even unimodular lattice of rank 8 and one of its constructions is as follows 
$$\left\{ (x_1, \dots , x_8) \in \frac{1}{2}\ZZ^8 : x_1\equiv \cdots \equiv x_8 \m 1, \, x_1+ \cdots + x_8 \equiv 0 \m  2\right\}. $$
The following eight vectors
\begin{align*}
&\alpha_1=\frac{1}{2}(1,-1,-1,-1,-1,-1,-1,1)& &\alpha_2=(1,1,0,0,0,0,0,0)&\\
&\alpha_3=(-1,1,0,0,0,0,0,0)& &\alpha_4=(0,-1,1,0,0,0,0,0)&\\
&\alpha_5=(0,0,-1,1,0,0,0,0)& &\alpha_6=(0,0,0,-1,1,0,0,0)&\\
&\alpha_7=(0,0,0,0,-1,1,0,0)& &\alpha_8=(0,0,0,0,0,-1,1,0)&
\end{align*}
are the simple roots of $E_8$ and the vectors
\begin{align*}
&w_1=(0,0,0,0,0,0,0,2)& &w_2=\frac{1}{2}(1,1,1,1,1,1,1,5)\\
&w_3=\frac{1}{2}(-1,1,1,1,1,1,1,7)& &w_4=(0,0,1,1,1,1,1,5)\\
&w_5=(0,0,0,1,1,1,1,4)& &w_6=(0,0,0,0,1,1,1,3)\\
&w_7=(0,0,0,0,0,1,1,2)& &w_8=(0,0,0,0,0,0,1,1)
\end{align*}
are the fundamental weights of $E_8$. The fundamental weights $w_j$ form the dual basis, so $(\alpha_i, w_j) = \delta_{ij}$. We remark that the highest root $\alpha_{E_8}$ of $E_8$ is $w_8$, which can be written as a linear combination of the simple roots
\begin{equation}
\alpha_{E_8}=w_8=2\alpha_1+3\alpha_2+4\alpha_3+6\alpha_4+5\alpha_5+4\alpha_6+
3\alpha_7+2\alpha_8.
\end{equation}
The exponents of the Weyl group $W(E_8)$ are 1, 7, 11, 13, 17, 19, 23, 29.
In Figure \ref{figE8} we give the extended Coxeter-Dynkin diagram of $E_8$.

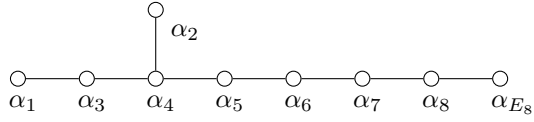
\begin{figure}[tb]
\noindent\[
\begin{tikzpicture}
\begin{scope}[start chain]
\foreach \dyni in {1,3,4,5,6,7,8,E_8} {
\dnode{\dyni}
}
\end{scope}
\begin{scope}[start chain=br going above]
\chainin (chain-3);
\dnodebr{2}
\end{scope}
\end{tikzpicture}
\]
\caption{Extended Coxeter-Dynkin diagram of $E_8$} 
\label{figE8}
\end{figure}

By \cite{SC}, the Weyl group $W(E_8)$ is of order $2^{14}3^55^27=696729600$ and it is generated by all permutations of $8$ letters, all even sign changes, and the matrix $\operatorname{diag}\{H_4, H_4\}$, where $H_4$ is the Hadamard matrix
$$H_4=\frac{1}{2}\left( \begin{array}{rrrr}
1 & 1 & 1 & 1 \\ 
1 & -1 & 1 & -1 \\ 
1 & 1 & -1 & -1 \\ 
1 & -1 & -1 & 1
\end{array}  \right).$$

In the sequel, we introduce many basic properties of $W(E_8)$-invariant Jacobi forms. We first remark that $W(E_8)$-invariant weak Jacobi forms are all of even weight  because the operator $\mathfrak{z} \mapsto -\mathfrak{z}$ belongs to $W(E_8)$. In view of the singular weight (see \cite{G} or \cite{G94}), we have
\begin{equation}
J_{k,E_8,t}^{W(E_8)}=\{0\} \quad \text{if}\quad  k<4,
\end{equation}
and if $\phi \in J_{4,E_8,t}^{W(E_8)}$ then for its non-zero Fourier coefficients $f(n,\ell)$ we have $2nt-(\ell,\ell)=0$. The two facts are also true for holomorphic Jacobi forms with a character.

The following fact is very standard. For a proof, we refer to \cite[Lemma 2.1]{G94}.

\begin{lemma}\label{lem2.3}
Let $\varphi \in J^{\w ,W(E_8)}_{k,E_8,t}$. Then the coefficients $f(n,\ell)$ of $\varphi$ depend only on the class of $\ell$ in $E_8/tE_8$ and on the value of $2nt-(\ell,\ell)$. Besides, if $f(n,\ell)\neq 0$ then we have
$$ 2nt- (\ell,\ell) \geq - \min\{(v,v): v\in \ell+ tE_8 \}.$$
\end{lemma}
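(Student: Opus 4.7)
The plan is to reduce the lemma to the quasi-periodicity axiom alone; neither Weyl invariance nor the $\SL_2(\ZZ)$-transformation law plays any role here. I would substitute the Fourier expansion (\ref{def4}) of $\varphi$ into both sides of the quasi-periodicity identity of Definition~\ref{def} for arbitrary $x, y \in E_8$, using that $E_8$ is unimodular so $E_8^{\vee}=E_8$ and the normalized form $\latt{\cdot,\cdot}$ coincides with the standard $(\cdot,\cdot)$. Because $\ell \in E_8^{\vee} = E_8$ and $y \in E_8$, the $y$-contribution produces a factor $e^{2\pi i (\ell, y)} = 1$ and is therefore trivial. Matching the coefficient of $e^{2\pi i (m\tau + (\ell', \mathfrak{z}))}$ on the two sides of the remaining $x$-translation identity yields the functional equation
$$ f(n, \ell) \;=\; f\!\left(n + (\ell, x) + \tfrac{t}{2}(x,x),\; \ell + tx\right) \qquad \text{for all } x \in E_8. $$

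The first assertion of the lemma then follows by a direct check that the hyperbolic norm $2nt - (\ell, \ell)$ is invariant under the substitution $(n, \ell) \mapsto (n + (\ell, x) + \frac{t}{2}(x, x),\; \ell + tx)$, combined with the observation that $\ell + tx$ sweeps out the entire coset $\ell + tE_8$ as $x$ runs through $E_8$. For the inequality, I would fix an arbitrary $v \in \ell + tE_8$, write $v = \ell + tx$, and apply the functional equation to obtain $f(n, \ell) = f(n_v, v)$ with $n_v := n + (\ell, x) + \frac{t}{2}(x,x)$ satisfying $2 n_v t - (v, v) = 2 n t - (\ell, \ell)$. Since $\varphi$ is a weak Jacobi form, its Fourier expansion is supported on $n_v \geq 0$, so the non-vanishing of $f(n, \ell)$ forces $2nt - (\ell, \ell) \geq -(v, v)$; optimizing over $v$ by minimizing $(v, v)$ on the coset yields the stated bound.

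I do not anticipate a conceptual obstacle: the argument is largely bookkeeping once the quasi-periodicity identity is expanded. The only places requiring care are the factor of $\tfrac{1}{2}$ that appears in $n_v$ from the $-t\pi i (x,x)\tau$ term in the automorphy factor (together with the compatibility $(x,x) \in 2\ZZ$ that ensures $n_v \in \ZZ$), and the implicit use of unimodularity of $E_8$ to trivialize the $y$-translation.
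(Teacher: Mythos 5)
Your argument is correct and is exactly the standard one: the paper gives no proof of its own but defers to \cite[Lemma 2.1]{G94}, whose argument is precisely this expansion of the quasi-periodicity identity into the functional equation $f(n,\ell)=f(n+(\ell,x)+\tfrac{t}{2}(x,x),\ell+tx)$ followed by invariance of $2nt-(\ell,\ell)$ and the support condition $n\geq 0$. The points you flag (evenness of $E_8$ making $\tfrac{t}{2}(x,x)\in\ZZ$, and unimodularity trivializing the $y$-shift) are the only delicate steps, and you handle both correctly.
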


Using the same technique as in the proof of \cite[Theorem 8.4]{EZ}, we  prove the following lemma.

\begin{lemma}\label{lem: module}
Let $t\in \NN$. The space $J^{\w ,W(E_8)}_{*, E_8,t}$ (resp. $J^{W(E_8)}_{*, E_8,t}$, $J^{\cusp ,W(E_8)}_{*, E_8,t}$) of $W(E_8)$-invariant weak Jacobi forms (resp. holomorphic Jacobi forms, Jacobi cusp forms) of index $t$ is a free module over $M_*$. Besides, the three modules have the same rank over $M_*$.
\end{lemma}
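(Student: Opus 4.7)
Following the strategy of \cite[Thm.~8.4]{EZ}, my plan is to reduce the freeness statement via theta decomposition to the well-known freeness of vector-valued modular forms for $\Mp_2(\ZZ)$ over $M_*$, and then to pass between the weak, holomorphic, and cuspidal cases by multiplication by a suitable power of $\Delta$.

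For $\varphi \in J^{\w ,W(E_8)}_{k,E_8,t}$, I would begin by writing the theta decomposition
\[
\varphi(\tau,\mathfrak{z})=\sum_{\mu\in E_8/tE_8} h_\mu(\tau)\,\Theta_{\mu}(\tau,\mathfrak{z}), \qquad \Theta_{\mu}(\tau,\mathfrak{z})=\sum_{\ell\in \mu+tE_8} q^{(\ell,\ell)/(2t)}e^{2\pi i(\ell,\mathfrak{z})},
\]
and observing that the transformation laws of $\varphi$ and of the $\Theta_\mu$ force the vector $(h_\mu)_\mu$ to be a vector-valued modular form of weight $k-4$ valued in $\CC[E_8/tE_8]$ for the (conjugate) Weil representation of $\Mp_2(\ZZ)$ attached to $E_8(t)$, which has finite image. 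Weyl invariance of $\varphi$ is equivalent to the function $\mu\mapsto h_\mu$ being constant on $W(E_8)$-orbits in $E_8/tE_8$, while the weak, holomorphic, and cusp distinctions translate respectively into allowing a pole at the cusp of bounded order (controlled by Lemma \ref{lem2.3}), being holomorphic at the cusp, or vanishing at the cusp on each component $h_\mu$.

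The space of vector-valued modular forms of integer weight for a finite-image representation of $\Mp_2(\ZZ)$ is a finitely generated free graded module over $M_*=\CC[E_4,E_6]$ by the classical structure theory. The $W(E_8)$-invariant subspace is a graded direct summand, because $W(E_8)$ is finite and hence acts semisimply, so it is a graded projective and therefore free $M_*$-module. Applying the same argument to the subspaces cut out by the cusp conditions yields the freeness of $J^{W(E_8)}_{*,E_8,t}$ and $J^{\cusp ,W(E_8)}_{*,E_8,t}$ as well.

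For the equality of ranks, the inclusions $J^{\cusp ,W(E_8)}_{*,E_8,t}\subseteq J^{W(E_8)}_{*,E_8,t}\subseteq J^{\w ,W(E_8)}_{*,E_8,t}$ are $M_*$-linear, giving rank inequalities in one direction. For the reverse, Lemma \ref{lem2.3} bounds $2nt-(\ell,\ell)$ from below by a constant depending only on $t$ for every non-vanishing Fourier coefficient of any weak Jacobi form. Thus, for $N$ large enough (depending only on $t$), multiplication by $\Delta^N$ furnishes an $M_*$-linear injection $J^{\w ,W(E_8)}_{*,E_8,t}\hookrightarrow J^{\cusp ,W(E_8)}_{*+12N,E_8,t}$, which yields the reverse inequality on ranks. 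The main obstacle is verifying the freeness of $W(E_8)$-invariant vector-valued modular forms over $M_*$, but this reduces to the classical theory combined with the semisimplicity of $W(E_8)$.
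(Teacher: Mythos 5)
Your proof is correct and follows essentially the same route as the paper, whose entire argument for this lemma is the remark that it follows by the technique of \cite[Theorem 8.4]{EZ}, namely the theta decomposition reducing everything to the classical freeness of modules of vector-valued modular forms for a finite-image representation over $M_*$, combined with the $\Delta^N$ squeeze between the weak, holomorphic and cuspidal modules for the equality of ranks. The only point you leave slightly implicit is that the same structure theory also gives freeness for vector-valued forms with componentwise bounded pole order at the cusp (needed for the weak case), but that is a standard extension of the result you invoke.
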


We next study carefully the Fourier expansion of $W(E_8)$-invariant Jacobi forms. 
For any $n\in \NN$, we define the $q^n$-term of $\varphi$ as 
$$
[\varphi]_{q^n}=\sum_{\ell\in E_8}f(n,\ell)e^{2\pi i (\ell,\mathfrak{z})},
$$
and for any $m\in E_8$ we denote the Weyl orbit of $m$ by
\begin{equation}
\orb(m)=\sum_{\sigma\in W(E_8)/W(E_8)_{m}}e^{2\pi i (\sigma(m),\mathfrak{z})},
\end{equation}
where $W(E_8)_{m}$ is the stabilizer subgroup of $W(E_8)$ with respect to $m$.

We see from Lemma \ref{lem2.3} that $[\varphi]_{q^n}$ is an exponential polynomial invariant under $W(E_8)$.  
By \cite[Th\'{e}or\`{e}me VI.3.1]{B} or \cite[Theorem 3.6.1]{L}, we get the following lemma.

\begin{lemma}\label{lem0}
Let $\varphi \in J^{\w ,W(E_8)}_{k,E_8,t}$ and $n\in \NN$. We have
$$ [\varphi]_{q^n} \in \CC[\,\orb(w_j):1\leq j \leq 8].$$
Moreover,  $\orb(w_j)$, $1\leq j \leq 8$, are algebraically independent over $\CC$.
\end{lemma}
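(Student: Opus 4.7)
The plan is to reduce Lemma \ref{lem0} to the classical invariant theory of Weyl groups acting on weight lattices, combined with a brief finiteness argument that turns the Fourier series defining $[\varphi]_{q^n}$ into a genuine element of a group algebra.

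First, I would observe that for each fixed $n$ only finitely many $\ell\in E_8$ satisfy $f(n,\ell)\ne 0$, so that $[\varphi]_{q^n}$ is a finite exponential polynomial in $\mathfrak{z}$. Indeed, by Lemma \ref{lem2.3} the coefficient $f(n,\ell)$ depends only on the coset $\ell+tE_8$ (there are $t^8$ such cosets) and it vanishes unless
\[
(\ell,\ell)\le 2nt+\min\{(v,v):v\in \ell+tE_8\}.
\]
Within each coset this bound confines $\ell$ to a ball, which meets the discrete set $E_8$ in a finite subset. Combining Weyl invariance \eqref{def1} with the Fourier expansion \eqref{def4} then allows us to regard $[\varphi]_{q^n}$ as a $W(E_8)$-invariant element of the group algebra $\CC[E_8]$, where we identify a lattice vector $\ell$ with the exponential $e^{2\pi i(\ell,\mathfrak{z})}$.

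Second, because $E_8$ is unimodular, the weight lattice $\Lambda(E_8)$ coincides with $E_8$ itself. The classical structure theorem cited in the statement of the lemma (Bourbaki, Ch.\ VI, \S 3, Th\'eor\`eme 1, or equivalently Lorenz Theorem 3.6.1) asserts that for any irreducible root system $R$, the ring of $W(R)$-invariants in $\CC[\Lambda(R)]$ is the polynomial algebra generated by the orbit sums of the fundamental weights, and that these orbit sums are algebraically independent over $\CC$. Specialising to $R=E_8$ yields
\[
\CC[E_8]^{W(E_8)}=\CC[\orb(w_1),\ldots,\orb(w_8)],
\]
with algebraically independent generators, which is exactly both assertions of the lemma applied to the invariant element $[\varphi]_{q^n}$.

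In summary, the entire content of the proof is the finiteness observation for the $q^n$-term together with the fact that $E_8$ equals its own weight lattice; the algebraic heart is the Bourbaki/Lorenz theorem quoted above. I do not foresee any genuine obstacle: the finiteness claim is a direct consequence of Lemma \ref{lem2.3}, and once made it places $[\varphi]_{q^n}$ in the setting where the classical invariant-theoretic result applies verbatim.
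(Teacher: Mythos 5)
Your proof is correct and follows essentially the same route as the paper, which likewise deduces the lemma from Weyl invariance, the Fourier expansion, and the Bourbaki/Lorenz theorem that $\CC[\Lambda(R)]^{W(R)}$ is a polynomial ring on the orbit sums of the fundamental weights (with $\Lambda(E_8)=E_8$ by unimodularity). Your explicit finiteness argument via Lemma \ref{lem2.3}, ensuring $[\varphi]_{q^n}$ really lies in the group algebra rather than a completion, is a small point the paper leaves implicit, but it does not constitute a different approach.
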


\subsection{Constructions of Jacobi forms}
In this subsection we introduce some techniques to construct Jacobi forms.  We first recall the following weight raising differential operators, which play a crucial role in this paper. Such operators can also be found in \cite{CK} for the general case or in \cite{EZ} for classical Jacobi forms.
\begin{lemma}\label{diffoperator}
Let $\varphi(\tau,\mathfrak{z})=\sum f(n,\ell)e^{2\pi i (n\tau + (\ell,\mathfrak{z}))}$ be a $W(E_8)$-invariant weak Jacobi form of weight $k$ and index $t$. Then $H_{k}(\varphi)$ is a $W(E_8)$-invariant weak Jacobi form of weight $k+2$ and index $t$, where 
\begin{align*}
H_k(\varphi)(\tau,\mathfrak{z})&=H(\varphi)(\tau,\mathfrak{z})+\frac{4-k}{12}E_2(\tau)\varphi(\tau,\mathfrak{z}),\\
H(\varphi)(\tau,\mathfrak{z})&=\sum_{n\in \NN}\sum_{\ell\in E_8} \left[n-\frac{1}{2t}(\ell,\ell) \right]f(n,\ell)e^{2\pi i (n\tau + (\ell,\mathfrak{z}))},
\end{align*}
and $E_2(\tau)=1-24\sum_{n\geq 1}\sigma(n)q^n$ is the Eisenstein series of weight $2$ on $\SL_2(\ZZ)$.
\end{lemma}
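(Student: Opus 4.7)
The plan is to rewrite $H$ as a heat-type differential operator on $\HH\times(E_8\otimes\CC)$ and then verify the four defining properties of a $W(E_8)$-invariant weak Jacobi form for $H_k(\varphi)$ in turn, following the classical argument of \cite{EZ} and its multivariable generalization in \cite{CK}. A term-by-term computation of $\partial_\tau$ and the Laplacian $\Delta_{\mathfrak{z}}=\sum_{j=1}^{8}\partial^{2}/\partial z_j^{2}$ on the Fourier series gives
\[
H(\varphi)(\tau,\mathfrak{z})=\frac{1}{2\pi i}\frac{\partial \varphi}{\partial \tau}+\frac{1}{8\pi^{2}t}\Delta_{\mathfrak{z}}\varphi,
\]
which is the form in which I would carry out the rest of the argument.

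From this expression, the Fourier expansion of $H_k(\varphi)$ still has the shape required by \eqref{def4} (only the Fourier coefficients are modified, and indices $n\geq 0$ are preserved), and the Weyl invariance is inherited from $\varphi$ because any $\sigma\in W(E_8)\subset \Orth(E_8)$ commutes with $\Delta_{\mathfrak{z}}$, while $E_2(\tau)\varphi$ is manifestly Weyl invariant. For the quasi-periodicity, I would differentiate the identity $\varphi(\tau,\mathfrak{z}+x\tau+y)=e^{-t\pi i[(x,x)\tau+2(x,\mathfrak{z})]}\varphi(\tau,\mathfrak{z})$ once in $\tau$ and twice in $\mathfrak{z}$ and combine the results. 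The only point to check is that the cross terms produced by the $\tau$-dependence of the prefactor and by the translation $\mathfrak{z}\mapsto\mathfrak{z}+x\tau$ cancel exactly, precisely because the relative coefficients $\tfrac{1}{2\pi i}$ and $\tfrac{1}{8\pi^{2}t}$ were chosen for this purpose; this is the standard anomaly computation for the heat operator.

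The main obstacle is modularity. Differentiating the modular transformation
\[
\varphi\!\left(\tfrac{a\tau+b}{c\tau+d},\tfrac{\mathfrak{z}}{c\tau+d}\right)=(c\tau+d)^{k}\exp\!\left(t\pi i\,\tfrac{c(\mathfrak{z},\mathfrak{z})}{c\tau+d}\right)\varphi(\tau,\mathfrak{z})
\]
in $\tau$ and combining with the action of $\Delta_{\mathfrak{z}}$ on the same identity produces, after collecting and simplifying terms, a residual anomaly proportional to $kc(c\tau+d)^{k+1}\exp(\cdots)\varphi(\tau,\mathfrak{z})$ that the heat operator alone cannot absorb into the factor $(c\tau+d)^{k+2}$. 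This single error is exactly compensated by the quasi-modular transformation
\[
E_2\!\left(\tfrac{a\tau+b}{c\tau+d}\right)=(c\tau+d)^{2}E_2(\tau)-\tfrac{6ic(c\tau+d)}{\pi},
\]
and matching the two anomalies pins down the coefficient $\tfrac{4-k}{12}$ in front of $E_2\varphi$. With this correction in place, $H_k(\varphi)$ satisfies the modularity axiom of weight $k+2$ and index $t$, completing the verification of all four axioms.
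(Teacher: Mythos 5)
Your proposal is correct and follows essentially the same route as the paper: rewrite $H$ as the heat operator $\frac{1}{2\pi i}\partial_\tau+\frac{1}{8\pi^2 t}\Delta_{\mathfrak{z}}$ and check that the elliptic transformation is preserved exactly while the $E_2$-term cancels the modular anomaly, the only difference being that the paper outsources the transformation computations to \cite[Lemma 3.3]{CK} rather than carrying them out. One small imprecision: the residual modular anomaly of $H(\varphi)$ is proportional to $(k-4)\,c/(c\tau+d)$ rather than $kc$ --- the shift by $4$ (half the rank of $E_8$) comes from the divergence term produced when $\Delta_{\mathfrak{z}}$ hits the automorphy factor $\exp\bigl(t\pi i\,c(\mathfrak{z},\mathfrak{z})/(c\tau+d)\bigr)$ --- and it is exactly this shift that forces the coefficient $\tfrac{4-k}{12}$ you correctly state.
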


\begin{proof}
Let $\mathfrak{z}=\sum_{j=1}^8z_j\alpha_j \in E_8\otimes \CC$, $z_j\in\CC$. We define $\frac{\partial}{\partial\mathfrak{z}}=\sum_{j=1}^8 w_j \frac{\partial}{\partial z_j}$. Then 
$$
\left(\frac{\partial}{\partial\mathfrak{z}},\frac{\partial}{\partial\mathfrak{z}}\right)e^{2\pi i(\ell,\mathfrak{z})}=-4\pi^2(\ell,\ell)e^{2\pi i(\ell,\mathfrak{z})}
$$
and the operator $H(\cdot)$ is equal to the heat operator
$$
H=\frac{1}{2\pi i}\frac{\partial}{\partial\tau}+\frac{1}{8\pi^2 t}\left(\frac{\partial}{\partial\mathfrak{z}},\frac{\partial}{\partial\mathfrak{z}}\right).
$$
The transformations of $H(\varphi)$ with respect to the actions of $\SL_2(\ZZ)$ and the Heisenberg group of $E_8$ were calculated in \cite[Lemma 3.3]{CK}. From these transformations and the transformation of $E_2$ under $\SL_2(\ZZ)$,  we can show that $H_k(\varphi)$ is indeed invariant under $\SL_2(\ZZ)$ and the Heisenberg group. Therefore, it is a $W(E_8)$-invariant weak Jacobi form of weight $k+2$ and index $t$.
\end{proof}

The next lemma gives a quite useful identity related to the $q^0$-term of any weak Jacobi form of weight zero. It is a particular case of \cite[Proposition 2.6]{G18}. 

\begin{lemma}\label{lf}
Let $\varphi(\tau,\mathfrak{z})=\sum f(n,\ell)e^{2\pi i (n\tau + (\ell,\mathfrak{z}))}$ be a $W(E_8)$-invariant weak Jacobi form of weight $0$ and index $t$. Then we have the following identity
\begin{equation*}
2t\sum_{\ell\in E_8}f(0,\ell)=3\sum_{\ell\in E_8}f(0,\ell)(\ell,\ell).
\end{equation*}
\end{lemma}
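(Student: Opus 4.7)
The plan is to produce, from $\varphi$, a genuine holomorphic modular form of weight $2$ on $\SL_2(\ZZ)$ whose constant term is, up to a simple rescaling, the difference of the two sides of the claimed identity. Since $M_2(\SL_2(\ZZ))=\{0\}$, the identity will follow automatically.

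First I would apply the weight raising operator $H_0$ of Lemma \ref{diffoperator} to $\varphi$. This produces
\begin{equation*}
H_0(\varphi)(\tau,\mathfrak{z})
=\sum_{n\in \NN}\sum_{\ell\in E_8}\Bigl[n-\tfrac{1}{2t}(\ell,\ell)\Bigr]f(n,\ell)\,e^{2\pi i(n\tau+(\ell,\mathfrak{z}))}
+\tfrac{1}{3}E_2(\tau)\varphi(\tau,\mathfrak{z}),
\end{equation*}
which by Lemma \ref{diffoperator} lies in $J^{\w,W(E_8)}_{2,E_8,t}$.

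Next I would restrict to $\mathfrak{z}=0$. Because $H_0(\varphi)$ transforms like a Jacobi form of weight $2$ and index $t$, the specialization $\psi(\tau):=H_0(\varphi)(\tau,0)$ transforms like a modular form of weight $2$ on $\SL_2(\ZZ)$; the Fourier expansion of $\psi$ only has non-negative powers of $q$ since $\varphi$ is weak, so $\psi$ is holomorphic at the cusp. Thus $\psi\in M_2(\SL_2(\ZZ))=\{0\}$, so $\psi\equiv 0$; in particular, its constant term in $q$ vanishes.

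Finally I would read off that constant term. Using $E_2(\tau)=1+O(q)$, the coefficient of $q^0$ in $\psi$ equals
\begin{equation*}
\sum_{\ell\in E_8}\Bigl[-\tfrac{1}{2t}(\ell,\ell)\Bigr]f(0,\ell)+\tfrac{1}{3}\sum_{\ell\in E_8}f(0,\ell).
\end{equation*}
Setting this equal to zero and clearing denominators by $6t$ yields exactly $2t\sum_{\ell}f(0,\ell)=3\sum_{\ell}f(0,\ell)(\ell,\ell)$, as desired. There is no serious obstacle here: the only things used are Lemma \ref{diffoperator} (providing the correction term $\tfrac{4-k}{12}E_2\varphi$ which, at $k=0$, contributes the coefficient $\tfrac{1}{3}$), the triviality of $M_2(\SL_2(\ZZ))$, and the holomorphicity at the cusp of $\psi$, which is automatic because $\varphi$ is a weak Jacobi form.
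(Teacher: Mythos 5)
Your proposal is correct and is essentially identical to the paper's proof: both apply $H_0$ from Lemma \ref{diffoperator}, specialize to $\mathfrak{z}=0$, invoke $M_2(\SL_2(\ZZ))=\{0\}$, and extract the identity from the vanishing of the constant term. The explicit bookkeeping of the coefficient $\tfrac{1}{3}$ and the factor $6t$ checks out.
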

\begin{proof}
By Lemma \ref{diffoperator}, we have $H_0(\varphi)\in J^{\w ,W(E_8)}_{2,E_8,t}$. It follows that $H_0(\varphi)(\tau,0)=0$ because it is a holomorphic modular form of weight 2 on $\SL_2(\ZZ)$. Therefore $H_0(\varphi)(\tau,0)$ has zero constant term, which establishes the desired identity.
\end{proof}

The following index raising operator can be found in \cite[Corollary 1]{G}.
\begin{lemma}\label{lem: index}
Let $s$ be a positive integer and $\varphi \in J^{\w ,W(E_8)}_{k,E_8,t}$. Then we have
$$ \varphi\lvert_k T_{-}(s)=s^{-1}\sum_{\substack{ ad=s\\ b\m d }}a^{k}\varphi\left(\frac{a\tau+b}{d},a\mathfrak{z}\right) \in J^{\w ,W(E_8)}_{k,E_8,st}.$$
Moreover, the function $\varphi\lvert_k T_{-}(s)$ has a Fourier expansion of the form 
$$\left(\varphi\lvert_k T_{-}(s)\right)(\tau,\mathfrak{z})=\sum_{\substack{n\in \NN\\ \ell\in E_8}}\sum_{\substack{d\in \NN\\ d\vert (n,\ell,s)}}d^{k-1}f\left(\frac{ns}{d^2},\frac{\ell}{d}\right)e^{2\pi i (n\tau + (\ell,\mathfrak{z}))},$$
where $f(n,\ell)$ are the Fourier coefficients of $\varphi$ and the notation $d\vert (n,\ell,s)$ means that $d\vert (n,s)$ and $d^{-1}\ell \in E_8$.
\end{lemma}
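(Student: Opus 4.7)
The plan is to check that $\Phi := \varphi|_k T_-(s)$, defined as the finite sum on the right-hand side, satisfies each of the four defining axioms of Definition \ref{def} for weight $k$ and index $st$, and then read off the Fourier expansion by direct substitution. Weyl invariance is the easiest: the action of $W(E_8)$ on $E_8\otimes\CC$ is linear, so $\sigma(a\mathfrak{z})=a\,\sigma(\mathfrak{z})$, and each summand $\varphi((a\tau+b)/d,\,a\mathfrak{z})$ inherits Weyl invariance from $\varphi$.

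For $\SL_2(\ZZ)$-modularity I would use the classical Hecke-type argument. Let $M_s$ denote the set of integer $2\times 2$ matrices of determinant $s$. The upper-triangular matrices $\begin{pmatrix}a & b\\ 0 & d\end{pmatrix}$ with $ad=s$ and $0\leq b<d$ form a complete system of representatives for $\SL_2(\ZZ)\backslash M_s$, and these are exactly the matrices appearing in $T_-(s)$. For any $\gamma\in\SL_2(\ZZ)$, right multiplication by $\gamma$ permutes these representatives modulo left multiplication by $\SL_2(\ZZ)$; combined with the automorphy factor of $\varphi$ at weight $k$ and index $t$, this produces the required weight-$k$, index-$st$ transformation for $\Phi$. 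For quasi-periodicity with $x,y\in E_8$, in each summand one rewrites
\[
a\mathfrak{z}+ax\tau+ay \;=\; a\mathfrak{z}+(dx)\cdot\frac{a\tau+b}{d}+(ay-bx),
\]
where $dx$ and $ay-bx$ both lie in $E_8$. Applying the index-$t$ quasi-periodicity of $\varphi$ in the variable $\tau'=(a\tau+b)/d$ produces an exponential factor which simplifies, using $ad=s$, to $\exp(-st\pi i[(x,x)\tau+2(x,\mathfrak{z})])$, independently of the triple $(a,b,d)$; this common factor pulls out of the sum.

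Finally, for the Fourier expansion I substitute $\varphi(\tau,\mathfrak{z})=\sum f(n',\ell')e^{2\pi i(n'\tau+(\ell',\mathfrak{z}))}$ into each summand. The averaging $\sum_{b\m d}e^{2\pi in'b/d}$ vanishes unless $d\mid n'$, where it equals $d$; writing $n'=dn''$ and then setting $n=an''$, $\ell=a\ell'$ (which requires $a\mid n$ and $a^{-1}\ell\in E_8$), the inner contribution becomes $s^{-1}\cdot a^k\cdot d\cdot f(sn/a^2,\ell/a)=a^{k-1}f(sn/a^2,\ell/a)$. Relabeling $a$ as $d$ yields precisely the asserted formula, and since $n'\geq 0$ forces $n\geq 0$, only non-negative powers of $q$ appear, so $\Phi$ is genuinely a weak Jacobi form.

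The main obstacle is purely bookkeeping: one must track carefully how the index rescales from $t$ to $st$ consistently across the modular, quasi-periodic, and Fourier calculations, and verify that the divisibility condition $a^{-1}\ell\in E_8$ is respected at each stage. Beyond this, no feature specific to $E_8$ enters — the argument is a direct transcription of the classical Hecke theory of Jacobi forms \cite{EZ} to the lattice-valued Weyl-invariant setting, which is why it can simply be cited from \cite{G}.
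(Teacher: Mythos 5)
Your proposal is correct. The paper does not actually prove this lemma --- it simply cites \cite[Corollary 1]{G} --- and your direct verification is precisely the standard Hecke-operator argument that the cited reference carries out: coset representatives for $\SL_2(\ZZ)\backslash M_s$ give modularity, the substitution $a\mathfrak{z}+ax\tau+ay=a\mathfrak{z}+(dx)\tau'+(ay-bx)$ gives index $st$, and the character-sum over $b\bmod d$ gives the Fourier expansion, which I have checked reproduces the stated formula exactly. One small point you glide over: after expanding $(dx,dx)\tau'=s(x,x)\tau+bd(x,x)$, the leftover factor $\exp(-t\pi i\, bd(x,x))$ equals $1$ only because $E_8$ is an even lattice, so the simplification is not a consequence of $ad=s$ alone; this is worth a half-sentence but is not a gap.
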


Since weight $4$ is the singular weight, it is impossible to construct $W(E_8)$-invariant holomorphic Jacobi forms of weight $6$ from Jacobi forms of weight $4$ by the differential operators introduced in Lemma \ref{diffoperator}. Next, we give a resultful method to construct $W(E_8)$-invariant holomorphic Jacobi forms of weight $6$. It is well-known that the theta function of the root lattice $E_8$ defined as
\begin{equation}
\begin{split}
\vartheta_{E_8}(\tau,\mathfrak{z})=&\sum_{\ell \in E_8}\exp\left(\pi i (\ell,\ell)\tau+2\pi i (\ell, \mathfrak{z})\right)\\
  =&\frac{1}{2}\left[\prod_{j=1}^8\vartheta(\tau,z_j) +\prod_{j=1}^8\vartheta_{00}(\tau,z_j)+\prod_{j=1}^8\vartheta_{01}(\tau,z_j)+\prod_{j=1}^8\vartheta_{10}(\tau,z_j) \right]
\end{split}
\end{equation}
is a $W(E_8)$-invariant holomorphic Jacobi form of weight $4$ and index $1$. One can check that $\vartheta_{E_8}(t\tau,t\mathfrak{z})$ is a $W(E_8)$-invariant holomorphic Jacobi form of weight $4$ and index $t$ for the congruence subgroup $\Gamma_0(t)$. We take a modular form of weight $2$ on $\Gamma_0(t)$ and note it by $g(\tau)$. Then $g(\tau)\vartheta_{E_8}(t\tau,t\mathfrak{z})$ is a $W(E_8)$-invariant holomorphic Jacobi form of weight $6$ and index $t$ for $\Gamma_0(t)$. Therefore, the trace operator of $g(\tau)\vartheta_{E_8}(t\tau,t\mathfrak{z})$ defined by
\begin{equation}
Tr_{\SL_2(\ZZ)}(g(\tau)\vartheta_{E_8}(t\tau,t\mathfrak{z}))=\sum_{\gamma\in \Gamma_0(t)\backslash \SL_2(\ZZ)}(g(\tau)\vartheta_{E_8}(t\tau,t\mathfrak{z}))\lvert_{6,t}\gamma
\end{equation}
is a $W(E_8)$-invariant holomorphic Jacobi form of weight $6$ and index $t$, where $\lvert_{k,t}\gamma$ is the slash action of $\gamma\in \SL_2(\ZZ)$ defined as
$$ 
(\phi \lvert_{k,t}\gamma)(\tau,\mathfrak{z})=(c\tau + d)^{-k} \exp\left(- t\pi i \frac{c(\mathfrak{z},\mathfrak{z})}{c \tau + d}\right) \phi \left( \frac{a\tau +b}{c\tau + d},\frac{\mathfrak{z}}{c\tau + d} \right). 
$$

By the index raising operators introduced in Lemma \ref{lem: index}, one can construct a $W(E_8)$-invariant holomorphic Jacobi form of weight $4$ and index $t\geq 1$:
\begin{equation}\label{X}
X_t(\tau,\mathfrak{z})=*(\vartheta_{E_8}\lvert_4T_{-}(t))(\tau,\mathfrak{z})=1+O(q) \in J^{W(E_8)}_{4,E_8,t},
\end{equation}
where $*$ is a constant such that $X_t(\tau,0)=E_4(\tau)$.  Sakai  \cite{Sa1} constructed five $W(E_8)$-invariant holomorphic Jacobi forms of weight $4$ as
\begin{align}
&A_j(\tau,\mathfrak{z})=X_j(\tau,\mathfrak{z}),\;j=1,2,3,5,\\
& A_4(\tau,\mathfrak{z})=A_1(\tau,2\mathfrak{z}).
\end{align}

Since $E_2(\tau)-pE_2(p\tau) \in M_2(\Gamma_0(p))$ if $p$ is a prime number, one can construct $W(E_8)$-invariant holomorphic Jacobi forms of weight $6$ and prime index. Furthermore, one can construct $W(E_8)$-invariant holomorphic Jacobi forms of weight $6$ and any index bigger than $1$ using the index raising operators. These Jacobi forms may reduce to Eisenstein series $E_6(\tau)$ by taking $\mathfrak{z}=0$.  Sakai \cite[Appendix A.1]{Sa1} also constructed $W(E_8)$-invariant holomorphic Jacobi forms of weight $6$ and index $2$, $3$, $4$, $6$ by choosing particular modular forms of weight $2$ on congruence subgroups. More precisely, they were constructed in the following way
\begin{align}
B_2(\tau,\mathfrak{z})&=*Tr_{\SL_2(\ZZ)}\left[(2E_2(2\tau)-E_2(\tau))\vartheta_{E_8}(2\tau,2\mathfrak{z})\right],\\
B_3(\tau,\mathfrak{z})&=*Tr_{\SL_2(\ZZ)}\left[(3E_2(3\tau)-E_2(\tau))\vartheta_{E_8}(3\tau,3\mathfrak{z})\right],\\
B_4(\tau,\mathfrak{z})&=*Tr_{\SL_2(\ZZ)}\left[\vartheta_{01}^4(2\tau)\vartheta_{E_8}(4\tau,4\mathfrak{z})\right],\\
B_6(\tau,\mathfrak{z})&=*Tr_{\SL_2(\ZZ)}\left[(3E_2(3\tau)-E_2(\tau))\vartheta_{E_8}(6\tau,6\mathfrak{z})\right],
\end{align}
here, these constants $*$ are chosen such that $B_j(\tau,0)=E_6(\tau)$.

\subsection{Lifting elliptic modular forms to Jacobi forms}
In this subsection we give another way to construct $W(E_8)$-invariant Jacobi forms.
In \cite{Sch1, Sch2}, Scheithauer constructed a map which lifts scalar-valued modular forms on congruence subgroups to modular forms for the Weil representation. In view of the isomorphism between modular forms for the Weil representation and Jacobi forms, we can easily build a lifting  from scalar-valued modular forms on congruence subgroups to Jacobi forms. 

For our purpose, we focus on the lattices $E_8(p)$, which is the group $E_8$ equipped with the following rescaled bilinear form 
$$\latt{\cdot,\cdot}_p= p(\cdot,\cdot),$$ 
where $p$ is a prime number. Let $D(p)=E_8(p)^*/E_8(p)$ be the discriminant group of $E_8(p)$. Then $D(p)$ is of level $p$. Let $\{e_\gamma: \gamma \in D(p)\}$ be the basis of the group ring $\CC[D(p)]$. We denote the Weil representation of $\SL_2(\ZZ)$ on $\CC[D(p)]$ by $\rho_{D(p)}$ and the orthogonal group of $D(p)$ by $\Orth(D(p))$. Let $M^{inv}_k(\rho_{D(p)})$ be the space of holomorphic modular forms for $\rho_{D(p)}$ of weight $k$ which are invariant under the action of $\Orth(D(p))$. By \cite[Theorem 6.2]{Sch1}, we have the following proposition.

\begin{proposition}\label{propsch}
Let $f\in M_{k}(\Gamma_0(p))$ be a scalar-valued holomorphic modular form for $\Gamma_0(p)$ of weight $k$. Then 
\begin{equation}
F_{\Gamma_0(p),f,0}(\tau):= \sum_{M\in \Gamma_0(p) \backslash \SL_2(\ZZ)} f\lvert_M(\tau) \rho_{D(p)}(M^{-1})e_0 \in M^{inv}_k(\rho_{D(p)}).
\end{equation}
If we write 
$$ f\lvert_S (\tau)=\sum_{t=0}^{p-1}g_t(\tau),  \quad S=\left(\begin{array}{cc}
0 & -1 \\ 
1 & 0
\end{array}  \right),$$
where 
$$g_t(\tau+1)=\exp\left(\frac{2t\pi i}{p}\right) g_t(\tau), \quad 0\leq t \leq p-1,$$
then we have 
\begin{equation}\label{fs1}
F_{\Gamma_0(p),f,0}(\tau)=f(\tau)e_0+\frac{1}{p^3}\sum_{\gamma\in D(p)} g_{j_\gamma}(\tau) e_\gamma,
\end{equation}
here $j_\gamma/p = -\latt{\gamma,\gamma}_p/2 \mod 1$ for $\gamma\in D(p)$.

Moreover, the map 
\begin{equation}\label{maps2}
f\in M_{k}(\Gamma_0(p)) \mapsto F_{\Gamma_0(p),f,0}\in M^{inv}_k(\rho_{D(p)})
\end{equation}
is an isomorphism.
\end{proposition}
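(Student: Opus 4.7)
The plan is to specialize Scheithauer's Theorem 6.2 of \cite{Sch1} to the discriminant form $D(p) = E_8(p)^\vee/E_8(p)$, which has order $p^8$, level $p$, and signature $8$. First I would verify that the sum defining $F_{\Gamma_0(p),f,0}$ is well-defined on cosets: the factor $f|_M$ depends only on the coset $\Gamma_0(p)M$ by the $\Gamma_0(p)$-modularity of $f$, while $\rho_{D(p)}(\gamma) e_0 = e_0$ for every $\gamma \in \Gamma_0(p)$, which follows from the standard $S$- and $T$-formulas for the Weil representation once one uses that $\operatorname{sign}(E_8(p)) \equiv 0 \pmod 8$ trivializes the eighth-root-of-unity character appearing in $\rho_{D(p)}(S)$. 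With well-definedness in hand, modularity of the averaged sum under $\SL_2(\ZZ)$ reduces to the usual permutation-of-coset-representatives argument, and the $\Orth(D(p))$-invariance is immediate because $\Orth(D(p))$ fixes $e_0$ and commutes with $\rho_{D(p)}$.

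To derive the explicit formula (\ref{fs1}), I would choose the coset representatives $\{I\} \cup \{ST^j : 0 \le j \le p-1\}$ for $\Gamma_0(p) \backslash \SL_2(\ZZ)$; there are exactly $p+1$ cosets since $T \in \Gamma_0(p)$. The identity coset contributes $f(\tau)\, e_0$. For $M = ST^j$, the vector $\rho_{D(p)}(T^{-j} S^{-1}) e_0$ equals $p^{-4}\sum_{\gamma \in D(p)} e^{-\pi i j \latt{\gamma,\gamma}_p} e_\gamma$, using $\rho_{D(p)}(S^{-1}) e_0 = p^{-4}\sum_\gamma e_\gamma$ together with $\rho_{D(p)}(T^{-j}) e_\gamma = e^{-\pi i j \latt{\gamma,\gamma}_p} e_\gamma$. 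Summing over $j$ weighted by $f|_{ST^j}(\tau)$ and interchanging the order of summation, the inner $j$-sum for fixed $\gamma$ is precisely the Fourier projector onto the $T$-isotypic component of $f|_S$ with eigenvalue $e^{2\pi i j_\gamma/p}$, which equals $p\, g_{j_\gamma}(\tau)$. The combined prefactor $p/p^4 = 1/p^3$ then matches (\ref{fs1}).

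The main obstacle is the isomorphism assertion (\ref{maps2}). Injectivity can be extracted from (\ref{fs1}) by examining the $e_\gamma$-components for $\gamma \ne 0$: each such component equals $p^{-3} g_{j_\gamma}(\tau)$, and since the values $(\alpha,\alpha)/2 \bmod p$ for $\alpha \in E_8 \setminus pE_8$ realize every residue class in $\ZZ/p\ZZ$, the residues $j_\gamma$ attain every value in $\ZZ/p\ZZ$ as $\gamma$ ranges over $D(p) \setminus \{0\}$. This forces $g_t \equiv 0$ for all $t$, hence $f|_S = 0$ and $f = 0$. For surjectivity, given any $F = \sum_\gamma F_\gamma e_\gamma \in M^{inv}_k(\rho_{D(p)})$, the invariance $\rho_{D(p)}(\gamma) e_0 = e_0$ on $\Gamma_0(p)$ shows $F_0 \in M_k(\Gamma_0(p))$, and one then verifies the identity $F = F_{\Gamma_0(p), F_0, 0}$. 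This reduces to showing that every $\Orth(D(p))$-invariant component $F_\gamma$ is determined by $F_0$ through the averaging construction, which is the technical heart of Theorem 6.2 in \cite{Sch1} and follows from a compatibility check combining the $\SL_2(\ZZ)$-equivariance of the Weil representation with the orbit structure of $\Orth(D(p))$ acting on $D(p)$.
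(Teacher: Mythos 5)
Your derivation of the averaging construction and of the explicit formula (\ref{fs1}) is sound, and in fact more detailed than the paper, which simply cites Theorems 6.2 and 6.5 of \cite{Sch1} for these points. Your injectivity argument also works, though it proves more than necessary: the paper only uses that some $\gamma\neq 0$ in $D(p)$ has $\latt{\gamma,\gamma}_p\in 2\ZZ$, which forces $g_0=0$ and then $f=0$ from the $e_0$-component. One caveat on the computation: with your normalization $\rho_{D(p)}(T^{-j})e_\gamma=e^{-\pi i j\latt{\gamma,\gamma}_p}e_\gamma$, the $j$-sum selects the component with $t/p\equiv +\latt{\gamma,\gamma}_p/2 \pmod 1$, whereas the proposition asserts $j_\gamma/p\equiv-\latt{\gamma,\gamma}_p/2$; this is a sign convention in the Weil representation that you need to align with the statement.

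The genuine gap is in surjectivity. The identity you propose to ``verify,'' namely $F=F_{\Gamma_0(p),F_0,0}$, is false even for $F$ in the image of the map: by (\ref{fs1}) the $e_0$-component of $F_{\Gamma_0(p),f,0}$ is $f+p^{-3}g_0$, not $f$, so by the injectivity you just established $F_{\Gamma_0(p),F_0,0}\neq F_{\Gamma_0(p),f,0}$ whenever $g_0\neq 0$. More fundamentally, the assertion that every $\Orth(D(p))$-invariant modular form for $\rho_{D(p)}$ arises from a scalar form on $\Gamma_0(p)$ by this averaging is not ``the technical heart of Theorem 6.2 in \cite{Sch1}'' --- that theorem only shows the lift lands in $M_k^{inv}(\rho_{D(p)})$ --- and it does not reduce to a routine compatibility check with the orbit structure. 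It is a separate, substantive theorem; the paper invokes \cite[Corollary 5.5]{Sch2}, which states that for a discriminant form of prime level every $\Orth(D)$-invariant modular form for the Weil representation is a linear combination of liftings $F_{\Gamma_0(p),f,\beta}$ over isotropic $\beta$ (and here only $\beta=0$ occurs). Without citing or reproving that result, your surjectivity argument is incomplete, and surjectivity is precisely the half of the isomorphism that cannot be read off from the explicit formula (\ref{fs1}).
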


\begin{proof}
The formula (\ref{fs1}) can be proved by \cite[Theorem 6.5]{Sch1}. Since there exists $\gamma\neq 0$ in $D(p)$ with $\latt{\gamma,\gamma}_p\in 2\ZZ$, we conclude from (\ref{fs1}) that the map (\ref{maps2}) is injective. The surjectivity of (\ref{maps2}) follows from \cite[Corollary 5.5]{Sch2}.
\end{proof}

Recall that the theta function for the lattice $E_8(p)$ are defined by
\begin{equation}
\Theta_{\gamma}^{E_8(p)}(\tau,\mathfrak{z})=\sum_{\ell \in \gamma+E_8}\exp\left(\pi i\latt{\ell,\ell}_p \tau + 2\pi i\latt{\ell,\mathfrak{z}}_p \right), \quad \gamma\in D(p).
\end{equation}

By means of \cite[Lemma 2.3]{G94}, we obtain the following result.

\begin{proposition}\label{Prop:lift}
Under the assumptions of Proposition \ref{propsch}, if we write 
$$F_{\Gamma_0(p),f,0}(\tau)=\sum_{\gamma\in D(p)} F_{\Gamma_0(p),f,0;\gamma}(\tau)e_\gamma,$$ then the function
\begin{equation}
\Phi_{\Gamma_0(p),f,0}(\tau,\mathfrak{z})=\sum_{\gamma\in D(p)} F_{\Gamma_0(p),f,0;\gamma}(\tau)\Theta_{\gamma}^{E_8(p)}(\tau,\mathfrak{z})
\end{equation}
is a $W(E_8)$-invariant holomorphic Jacobi form of weight $k+4$ and index $p$. Moreover, the map sends cusp forms to Jacobi cusp forms.
\end{proposition}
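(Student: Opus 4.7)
The plan is to invoke the well-known dictionary between vector-valued modular forms for the Weil representation $\rho_{D(p)}$ and scalar-valued Jacobi forms of lattice index $E_8(p)$, which is exactly the content of \cite[Lemma 2.3]{G94} cited by the author. Concretely, I would argue that pairing the theta decomposition of a Jacobi form of index $p$ on $E_8$ with any vector-valued modular form in $M_k(\rho_{D(p)})$ yields a Jacobi form of weight $k+4$ and index $p$; then the stated formula is just this pairing applied to $F_{\Gamma_0(p),f,0}\in M^{inv}_k(\rho_{D(p)})$ produced by Proposition~\ref{propsch}.

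Step one, modularity. The family $\{\Theta_\gamma^{E_8(p)}\}_{\gamma\in D(p)}$ transforms, as a vector in $\CC[D(p)]$, under the representation dual to $\rho_{D(p)}$ with weight $\operatorname{rank}(E_8)/2=4$ and with the Jacobi transformation factor of index $1$ with respect to the rescaled form $\langle\cdot,\cdot\rangle_p$, i.e. of index $p$ in the original $E_8$. Because $F_{\Gamma_0(p),f,0}$ transforms under $\rho_{D(p)}$ of weight $k$, the diagonal pairing $\sum_\gamma F_{\Gamma_0(p),f,0;\gamma}(\tau)\,\Theta_\gamma^{E_8(p)}(\tau,\mathfrak{z})$ is a scalar $\SL_2(\ZZ)$-Jacobi form of weight $k+4$ and index $p$; quasi-periodicity with respect to $E_8\oplus E_8\tau$ comes entirely from the theta factors.

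Step two, Weyl invariance. Any $\sigma\in W(E_8)$ acts on $E_8$ by an isometry, so it descends to an element of $\Orth(D(p))$ on the discriminant group. A change of summation variable $\ell\mapsto\sigma^{-1}(\ell)$ yields
\begin{equation*}
\Theta_\gamma^{E_8(p)}(\tau,\sigma(\mathfrak{z}))=\Theta_{\sigma^{-1}\gamma}^{E_8(p)}(\tau,\mathfrak{z}).
\end{equation*}
By construction $F_{\Gamma_0(p),f,0}\in M^{inv}_k(\rho_{D(p)})$ is $\Orth(D(p))$-invariant, so the components satisfy $F_{\Gamma_0(p),f,0;\sigma\gamma}=F_{\Gamma_0(p),f,0;\gamma}$. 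Re-indexing the sum $\sum_\gamma$ by $\gamma\mapsto\sigma\gamma$ therefore leaves $\Phi_{\Gamma_0(p),f,0}$ unchanged.

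Step three, holomorphic/cusp conditions. The Fourier expansion of $\Phi_{\Gamma_0(p),f,0}$ is the convolution of the expansions of $F_{\Gamma_0(p),f,0}$ and of the theta series; the theta contributions already satisfy $2\cdot 1\cdot n-\langle\ell,\ell\rangle_p\geq 0$ with equality along the isotropic vectors, so if each component $F_{\Gamma_0(p),f,0;\gamma}(\tau)$ is holomorphic at $\infty$ (resp.\ vanishes there), the hyperbolic norm inequality $2nt-(\ell,\ell)\geq 0$ (resp.\ $>0$) for $\Phi_{\Gamma_0(p),f,0}$ follows. By explicit formula \eqref{fs1}, each component $F_{\Gamma_0(p),f,0;\gamma}$ is, up to rescaling, a translate of $f$ or of the Fricke transform $f|_S$, so holomorphy (resp.\ cuspidality) of $f$ propagates to the whole vector. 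The main bookkeeping hurdle is simply to pin down the weight and index conventions so that the pairing with $\{\Theta_\gamma^{E_8(p)}\}$ really lies in $\CC[D(p)]^{\vee}\otimes\CC[D(p)]$ and collapses to an $\SL_2(\ZZ)$-invariant scalar; once this is correctly set up, as in \cite[Lemma 2.3]{G94}, every other claim of the proposition is a direct verification.
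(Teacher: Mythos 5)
Your proposal is correct and takes essentially the same approach as the paper: the paper's entire proof of this proposition is the single citation of \cite[Lemma 2.3]{G94}, i.e.\ the theta-decomposition dictionary pairing a vector-valued modular form for $\rho_{D(p)}$ with the weight-$4$, index-$p$ theta vector $\{\Theta_\gamma^{E_8(p)}\}$, and your three steps (modularity from the pairing, Weyl invariance from the $\Orth(D(p))$-invariance supplied by Proposition \ref{propsch}, and the holomorphy/cusp conditions from the vanishing of the hyperbolic norm on the theta terms) are precisely the content that citation is meant to carry.
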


As an application of the above map, we can construct $W(E_8)$-invariant Jacobi forms of small weight. For example,
\begin{align}
\Phi_{\Gamma_0(p),1,0}(\tau,\mathfrak{z})&\in J_{4,E_8,p}^{W(E_8)},\\
\Phi_{\Gamma_0(p),pE_2(p\tau)-E_2(\tau),0}(\tau,\mathfrak{z})&\in J_{6,E_8,p}^{W(E_8)}.
\end{align}

We remark that the map $f\mapsto \Phi_{\Gamma_0(p),f,0}$ is injective. But it is not surjective in general unless the homomorphism $\Orth(E_8(p))=W(E_8) \to \Orth(D(p))$ is surjective. 
The map $\Orth(E_8(p)) \to \Orth(D(p))$ is surjective if and only if $p=2$. Therefore, as an analogue of the natural isomorphism
\begin{align*}
M_{k}(\SL_2(\ZZ)) &\longrightarrow J_{k+4,E_8,1}^{W(E_8)}\\
f(\tau)&\longmapsto f(\tau)\vartheta_{E_8}(\tau,\mathfrak{z}),
\end{align*}
we can build the following isomorphism.

\begin{corollary}\label{coro:index2}
We have the following isomorphism
\begin{align*}
M_k(\Gamma_0(2)) &\longrightarrow J_{k+4,E_8,2}^{W(E_8)}\\
f(\tau) &\longmapsto  \Phi_{\Gamma_0(2),f,0}(\tau,\mathfrak{z})
\end{align*}
and it induces an isomorphism between the subspaces of cusp forms.
\end{corollary}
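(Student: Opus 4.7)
The plan is to combine the scalar-to-vector-valued lifting of Proposition \ref{propsch} with the standard theta decomposition that identifies Jacobi forms of a given lattice index with vector-valued modular forms for the associated Weil representation. Concretely, a $W(E_8)$-invariant Jacobi form $\varphi$ of weight $k+4$ and index $2$ admits a unique expansion
\begin{equation*}
\varphi(\tau,\mathfrak{z})=\sum_{\gamma\in D(2)} h_\gamma(\tau)\,\Theta_\gamma^{E_8(2)}(\tau,\mathfrak{z}),
\end{equation*}
and the vector $(h_\gamma)_{\gamma\in D(2)}$ transforms as a holomorphic modular form of weight $k$ for the Weil representation $\rho_{D(2)}$. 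Well-definedness of the map $f\mapsto \Phi_{\Gamma_0(2),f,0}$ is already granted by Proposition \ref{Prop:lift}.

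For injectivity, I would simply chain two facts already in hand: Proposition \ref{propsch} asserts that $f\mapsto F_{\Gamma_0(2),f,0}$ is an isomorphism (in particular injective), and the theta decomposition above is unique, so the Jacobi form $\Phi_{\Gamma_0(2),f,0}$ determines $F_{\Gamma_0(2),f,0}$ and hence $f$.

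The real content is surjectivity. Given $\varphi\in J^{W(E_8)}_{k+4,E_8,2}$, I would apply the theta decomposition to obtain a vector-valued modular form $(h_\gamma)_\gamma$ for $\rho_{D(2)}$. The $W(E_8)$-invariance of $\varphi$, together with the $W(E_8)$-equivariance of the theta functions $\Theta_\gamma^{E_8(2)}$, forces $(h_\gamma)_\gamma$ to be invariant under the image of $W(E_8)=\Orth(E_8(2))$ in $\Orth(D(2))$. The crucial input, pointed out in the paragraph preceding the corollary, is that for $p=2$ the natural homomorphism $\Orth(E_8(2))\to \Orth(D(2))$ is surjective; this is precisely the numerical accident that makes this corollary work and would fail for odd primes. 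Granting this surjectivity, $(h_\gamma)_\gamma$ lies in $M_k^{inv}(\rho_{D(2)})$, and Proposition \ref{propsch} then produces a (unique) $f\in M_k(\Gamma_0(2))$ with $F_{\Gamma_0(2),f,0}=(h_\gamma)_\gamma$; by construction $\Phi_{\Gamma_0(2),f,0}=\varphi$.

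The main obstacle is not any computation but rather the reliance on the surjectivity $\Orth(E_8(2))\twoheadrightarrow \Orth(D(2))$. If I had to verify it in place rather than cite it, I would use that $D(2)\cong (E_8/2E_8,\tfrac{1}{2}(\cdot,\cdot)\bmod \ZZ)$ as a quadratic form over $\mathbb{F}_2$, identify $\Orth(D(2))$ with the orthogonal group of this non-degenerate $\mathbb{F}_2$-quadratic form of plus type in dimension $8$, and then exhibit enough reflections in roots of $E_8$ mapping onto generating reflections of this finite orthogonal group; this is a standard calculation with $E_8\bmod 2$ that fits naturally with the description of $W(E_8)$ recalled in \S\ref{Sec:3}.
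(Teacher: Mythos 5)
Your proposal is correct and follows essentially the same route as the paper: the paper's justification (given in the paragraph preceding the corollary rather than as a formal proof) is exactly the combination of the theta decomposition, the isomorphism $f\mapsto F_{\Gamma_0(p),f,0}$ of Proposition \ref{propsch}, and the fact that $\Orth(E_8(p))=W(E_8)\to\Orth(D(p))$ is surjective precisely for $p=2$. Your additional sketch of how one would verify that surjectivity via the identification of $\Orth(D(2))$ with the orthogonal group of the plus-type quadratic form on $E_8/2E_8$ over $\mathbb{F}_2$ is a reasonable way to fill in the one fact the paper merely asserts.
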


\section{The ring of \texorpdfstring{$W(E_8)$}{W(E8)}-invariant Jacobi forms}\label{Sec:4}
In this section we study the bigraded ring of $W(E_8)$-invariant weak Jacobi forms.
Lemma \ref{lem: module} says that the space of $W(E_8)$-invariant weak Jacobi forms of fixed index is a free module over $M_*$. In what follows, we give an explicit formula to compute the number of generators. The following is our main theorem, which is  an explicit version of the Sakai conjecture in \cite[Page 55]{Sa2}.

\begin{theorem}\label{MTH}
Let $t$ be a positive integer. The space $J_{*,E_8,t}^{\w ,W(E_8)}$ of $W(E_8)$-invariant weak Jacobi forms of index $t$ is a free module of rank $r(t)$ over $M_*$, where $r(t)$ is given by the generating series
\begin{equation}\label{eq1}
\frac{1}{(1-x)(1-x^2)^2(1-x^3)^2(1-x^4)^2(1-x^5)(1-x^6)}=\sum_{t\geq 0}r(t)x^t.
\end{equation}
Equivalently, the bigraded algebra $J_{*,E_8,*}^{\w ,W(E_8)}$ of $W(E_8)$-invariant weak Jacobi forms is contained in the polynomial algebra in nine variables over the fractional field of $\CC[E_4,E_6]$. More precisely, 
\begin{equation}
J_{*,E_8,*}^{\w ,W(E_8)} \subsetneq \CC\left( E_4, E_6 \right) \left[A_1,A_2,A_3,A_4,A_5,B_2,B_3,B_4,B_6 \right],
\end{equation}
and the functions $A_1$, $A_2$, $A_3$, $A_4$, $A_5$, $B_2$, $B_3$, $B_4$, $B_6$ are algebraically independent over $M_*$.
\end{theorem}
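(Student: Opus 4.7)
The plan is to leverage the structural description of $q^0$-terms provided by Lemma \ref{lem1} (established just before this theorem) and to prove three claims in sequence: algebraic independence of the Sakai generators over $M_*$, containment of $J^{\w,W(E_8)}_{*,E_8,*}$ in $\CC(E_4,E_6)[A_1,\ldots,B_6]$, and the rank equality, from which the generating series follows.

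For algebraic independence, I would suppose a non-trivial identity $\sum_{I,J} c_{I,J}(\tau) A^I B^J = 0$ with $c_{I,J} \in M_*$ and split by total index, reducing to monomials of a fixed index $t$. Although $[A_m]_{q^0} = [B_m]_{q^0} = 1$ (since $A_m, B_m$ are holomorphic so the bound $2nm-(\ell,\ell)\ge 0$ forces $\ell = 0$ at $n = 0$), applying the heat operator of Lemma \ref{diffoperator} repeatedly and reading off higher $q$-coefficients yields Weyl-invariant polynomials in the orbit sums $\orb(w_i)$. By Lemma \ref{lem0} these orbit sums are algebraically independent over $\CC$, and the distinct index-$t$ monomials $A^I B^J$ produce distinct orbit-sum signatures at the appropriate weights of their higher $q$-coefficients; comparing forces each $c_{I,J} = 0$.

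For the containment, I would induct on the $\Delta$-depth of $\varphi\in J^{\w,W(E_8)}_{k,E_8,t}$: a weak Jacobi form with vanishing $q^0$-term is divisible by the cusp form $\Delta$, so after dividing out the maximal power of $\Delta$ one may assume $[\varphi]_{q^0}\neq 0$. At this base, Lemma \ref{lem1} writes $[\varphi]_{q^0}$ as a prescribed polynomial in the $\orb(w_i)$, corresponding via Sakai's identification to a combination of Weyl characters of fundamental $E_8$-representations of level $t$. Since $\Delta = (E_4^3 - E_6^2)/1728 \in \CC[E_4,E_6]$ vanishes at $q = 0$, inverting it inside $\CC(E_4,E_6)$ introduces controlled poles, and this extra flexibility allows every admissible $q^0$-term to be realized by a polynomial in $A_1,\ldots,B_6$ with coefficients in $\CC(E_4,E_6)$. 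Subtracting the matching polynomial kills $[\varphi]_{q^0}$ and strictly lowers the induction parameter, so the procedure terminates after finitely many steps.

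Combining the two steps with freeness (Lemma \ref{lem: module}) pins down the rank of $J^{\w,W(E_8)}_{*,E_8,t}$ over $M_*$ as exactly $r(t)$: algebraic independence supplies $r(t)$ linearly independent index-$t$ monomials in the Sakai forms, and the containment (after clearing denominators in $E_4, E_6$) bounds the rank above by the same count. The product formula $\prod_j(1-x^{m_j})^{-1}$ with exponents $(m_j) = (1,2,2,3,3,4,4,5,6)$ is then just the Poincar\'e series of the polynomial ring on the nine Sakai generators, which also matches the list of levels of trivial and fundamental $E_8$-representations. The main obstacle is the $q^0$-matching step in the containment argument: establishing that every polynomial in $\orb(w_i)$ permitted by Lemma \ref{lem1} is realized as the $q^0$-term of some $\CC(E_4,E_6)$-polynomial in the Sakai forms — essentially, proving Sakai's conjecture along the way — is the technical core of the proof.
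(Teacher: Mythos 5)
Your overall architecture (lower bound from algebraic independence of the Sakai forms, upper bound from the $q^0$-term analysis of Lemma \ref{lem1}, then rank counting) matches the paper's, but both of your main steps contain genuine gaps that the paper's actual proof is specifically built to avoid.

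First, the algebraic independence of $A_1,\dots,A_5,B_2,B_3,B_4,B_6$. You correctly observe that $[A_m]_{q^0}=[B_m]_{q^0}=1$, so $q^0$-terms cannot separate the index-$t$ monomials $A^IB^J$, and you propose to separate them instead by ``distinct orbit-sum signatures'' of higher $q$-coefficients extracted with the heat operator. This is asserted, not proved: nothing guarantees that the $q^1$-coefficients (or any finite collection of higher coefficients) of the index-$t$ monomials are linearly independent for every $t$, and the heat operator gives no uniform handle on products. The paper instead imports Sakai's explicit forms $\alpha_{k,t}$: nine $M_*$-polynomial combinations of the $A_i,B_j$ whose $q^0$-terms are exactly $1,\orb(w_1),\dots,\orb(w_8)$ at indices $1,2,2,3,3,4,4,5,6$. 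Their algebraic independence follows from that of the $\orb(w_i)$ over $\CC$ (Lemma \ref{lem0}) by taking $q^0$-terms and dividing by $\Delta$, and the independence of the $A_i,B_j$ is a formal consequence. Without this input, or a substitute for it, your first step does not close.

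Second, the upper bound. Your induction on $\Delta$-depth requires that every polynomial in the $\orb(w_i)$ permitted by Lemma \ref{lem1} is actually realized as the $q^0$-term of some $\CC(E_4,E_6)$-polynomial in the Sakai forms; you flag this yourself as ``the technical core'' and leave it open. The paper never proves such a realization statement and does not need it. Lemma \ref{lem2} obtains $R(t)\le r(t)$ from just two facts: the space of possible $q^0$-terms of index-$t$ forms has $\CC$-dimension at most $r(t)$ (Lemma \ref{lem1}), and $J^{\w ,W(E_8)}_{*,E_8,t}$ is a free $M_*$-module (Lemma \ref{lem: module}). If the rank exceeded $r(t)$, one multiplies the basis elements by Eisenstein series to a common weight, finds a $\CC$-linear combination with vanishing $q^0$-term, and divides by $\Delta$ to contradict freeness. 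The containment in $\CC\left(E_4,E_6\right)\left[A_1,\dots,B_6\right]$ is then a corollary of the rank equality (the $r(t)$ index-$t$ monomials already span after tensoring with $\CC\left(E_4,E_6\right)$), not an input to it. Reorganizing your argument in this order removes the need for the surjectivity claim entirely.
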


\begin{proof}
We denote the rank of $J_{*,E_8,t}^{\w ,W(E_8)}$ over $M_*$ by $R(t)$. It is sufficient to show $R(t)=r(t)$ in order to prove the theorem. 

On the one hand, the nine Jacobi forms $A_i$ and $B_j$  are algebraically independent over $M_*$. Sakai constructed  nine meromorphic Jacobi forms $\alpha_{k,t}$ in \cite[Appendix A.2]{Sa1} and \cite[\S 3.2 and page 74]{Sa2} whose $q^0$-terms involve the eight fundamental Weyl orbits $\orb(w_i)$. For each $\alpha_{k,t}$, we multiply it by a certain power of $E_4$ to cancel the pole. In this way, we get a weak Jacobi form $\widehat{\alpha}_{k,t}$ whose $q^0$-term is the same as that of $\alpha_{k,t}$. By considering the polynomial combinations of these $\widehat{\alpha}_{k,t}$ over $M_*$, we can construct nine basic $W(E_8)$-invariant weak Jacobi forms whose $q^0$-term is a single fundamental Weyl orbit: one Jacobi form of index 1 with $q^0$-term $1$; two Jacobi forms of index $2$ with $q^0$-terms $\orb(w_1)$ and $\orb(w_8)$ respectively; two Jacobi forms of index $3$ with $q^0$-terms $\orb(w_2)$ and $\orb(w_7)$ respectively; two Jacobi forms of index $4$ with $q^0$-terms $\orb(w_3)$ and $\orb(w_6)$ respectively; one Jacobi form of index $5$ with $q^0$-term $\orb(w_5)$; one Jacobi form of index $6$ with $q^0$-term $\orb(w_4)$. For example, the index one and index two Jacobi forms can be constructed as follows:
\begin{align*}
-\frac{1}{4}E_4\alpha_{0,1}&=1+O(q) \in J_{4,E_8,1}^{W(E_8)},\\
-\frac{9}{2}E_6E_4^2\alpha_{-6,2}-\frac{9}{8}E_6^2E_4\alpha_{-8,2}+135E_4^2\alpha_{0,1}^2&=\orb(w_1)+O(q)\in J_{8,E_8,2}^{\w, W(E_8)},\\
-\frac{1}{4}E_6E_4^2\alpha_{-6,2}+\frac{1}{48}E_6^2E_4\alpha_{-8,2}+15E_4^2\alpha_{0,1}^2&=\orb(w_8)+O(q)\in J_{8,E_8,2}^{\w, W(E_8)}.
\end{align*}
We note that $\widehat{\alpha}_{0,1}=E_4\alpha_{0,1}$, $\widehat{\alpha}_{-6,2}=E_4^2\alpha_{-6,2}$ and $\widehat{\alpha}_{-8,2}=E_4\alpha_{-8,2}$.
Since $\orb(w_i)$, $1\leq i \leq 8$, are algebraically independent over $\CC$, it is easy to show that the nine basic weak Jacobi forms are algebraically independent over $M_*$. We notice that the nine basic weak Jacobi forms are in fact polynomials in $A_i$ and $B_j$ over $M_*[\frac{1}{\Delta}]$. Therefore, the nine Jacobi forms $A_i$ and $B_j$ are also algebraically independent over $M_*$. This fact yields $R(t)\geq r(t)$. 

On the other hand, Lemma \ref{lem2} below gives $R(t)\leq r(t)$. Then the proof is completed.
\end{proof}

The next lemma is crucial to the proof of the above theorem. It also has its own interest because the values of $W(E_8)$-invariant Jacobi forms at $q=0$ are very interesting in physics (see \cite{Sa1, Sa2}).

\begin{lemma}\label{lem1}
Assume that 
$$\phi_t(\tau,\mathfrak{z})=\sum_{n\geq 0 }\sum_{\ell\in E_8}f(n,\ell)e^{2\pi i(n\tau+(\ell,\mathfrak{z}))}$$
is a $W(E_8)$-invariant weak Jacobi form of index $t$. Then we have
\begin{equation}
\sum_{\ell\in E_8}f(0,\ell)e^{2\pi i (\ell,\mathfrak{z)}} = \sum_{\substack{X\in \NN^8\\T(X)\leq t}}c(X)\prod_{i=1}^8\orb(w_i)^{x_i},
\end{equation}
where $c(X)\in \CC$ are constants, $X=(x_1,x_2,\dots,x_8)\in \NN^8$ and 
$$
T(X)=2x_1+3x_2+4x_3+6x_4+5x_5+4x_6+3x_7+2x_8.
$$
Moreover, $\orb(w_i)$, $1\leq i \leq 8$, are algebraically independent over $\CC$.
\end{lemma}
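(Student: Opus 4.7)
The plan is to combine the polynomial expression for the $q^0$-term from Lemma \ref{lem0} with the shortest-vector constraint coming from Lemma \ref{lem2.3}, and then cut off the allowed monomials by interpreting the form $T$ as the affine level of a dominant weight.

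By Lemma \ref{lem0}, I may write $[\phi_t]_{q^0}=\sum_{X\in\NN^8}c(X)\prod_{i=1}^8\orb(w_i)^{x_i}$ as a finite sum, and algebraic independence of the $\orb(w_i)$ is already part of that lemma. So it remains to show that $c(X)=0$ whenever $T(X)>t$. Applying Lemma \ref{lem2.3} at $n=0$ forces every $\ell$ with $f(0,\ell)\neq 0$ to be a shortest vector in its coset $\ell+tE_8$.

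Next, I identify $T$ with an affine level. Since $\theta:=\alpha_{E_8}=w_8$ is the highest root of $E_8$ and $(\alpha_i,w_j)=\delta_{ij}$, for any dominant weight $\lambda=\sum x_i w_i$ one has $(\theta,\lambda)=T(X)$, the marks $2,3,4,6,5,4,3,2$ of $\theta$ being exactly the coefficients defining $T$. The claim is then that a dominant $\lambda$ is a shortest vector in $\lambda+tE_8$ if and only if $T(X)\leq t$. The forward direction is immediate from $(\lambda-t\theta,\lambda-t\theta)-(\lambda,\lambda)=-2t(\theta,\lambda)+2t^2\geq 0$; the reverse direction is the classical description of the Voronoi cell of the origin in $tE_8$ as the union of the $W(E_8)$-translates of $t\bar A$, where $\bar A=\{x:(\alpha_i,x)\geq 0,\ (\theta,x)\leq 1\}$ is the closure of the fundamental alcove of the affine Weyl group $W(E_8)\ltimes E_8$.

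Finally I pass between two bases of the Weyl-invariant exponential polynomial ring on $E_8\otimes\CC$. The monomials $m_X:=\prod\orb(w_i)^{x_i}$ and the Weyl orbits $\orb(\mu)$ of the dominant weights $\mu(X):=\sum x_iw_i$ are both bases indexed by $X\in\NN^8$, and since each $\orb(w_i)$ contains $e^{2\pi i(w_i,\mathfrak{z})}$ with multiplicity one while all other terms lie strictly below $w_i$ in dominance order, the change of basis is unitriangular: $m_X=\orb(\mu(X))+\sum_{\nu\prec\mu(X)}\gamma(X,\nu)\orb(\nu)$. Writing $[\phi_t]_{q^0}=\sum_\mu d_\mu\orb(\mu)$ gives $d_\mu=f(0,\mu)$ for dominant $\mu$, so the previous step forces $d_\mu=0$ whenever $T(\mu)>t$. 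If the set $\{X:c(X)\neq 0,\ T(X)>t\}$ were nonempty, pick $X^*$ with $\mu(X^*)$ maximal in dominance within it; inverting the unitriangular expansion yields $c(X^*)=d_{\mu(X^*)}-\sum_{\mu(X)\succ\mu(X^*)}c(X)\gamma(X,\mu(X^*))$, in which the first term vanishes because $T(X^*)>t$, and any contributing $X$ in the sum has $\mu(X)-\mu(X^*)$ in the nonnegative $\ZZ$-span of the simple roots, so $T(X)=(\theta,\mu(X))\geq(\theta,\mu(X^*))=T(X^*)>t$ (using $(\theta,\alpha_i)\geq 0$) and $\mu(X)\succ\mu(X^*)$, contradicting the maximal choice of $X^*$. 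Hence $c(X^*)=0$, a contradiction. The main obstacle is the reverse direction of the alcove characterization, which I will invoke from the classical theory of affine Weyl groups; the remaining steps are formal bookkeeping with the unitriangular change of basis.
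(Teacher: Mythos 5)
Your proof is correct and follows essentially the same route as the paper: Lemma \ref{lem0} for the polynomial expression, Lemma \ref{lem2.3} together with the norm computation $(\lambda-tw_8,\lambda-tw_8)-(\lambda,\lambda)=-2tT(\lambda)+2t^2$ to kill the orbit coefficients with $T>t$, and the unitriangular change of basis plus the monotonicity of $T$ under the dominance order to transfer that vanishing to the monomial coefficients. One remark: the ``reverse direction of the alcove characterization'' that you flag as the main obstacle is never actually used in your argument --- you only need the implication ``shortest $\Rightarrow T\leq t$'' (equivalently, $T(\mu)>t$ implies $\mu$ is not shortest in its coset, hence $f(0,\mu)=0$), which you already establish by the explicit computation, so no appeal to the classical theory of affine Weyl groups is required.
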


\begin{proof}
By Lemma \ref{lem0}, we know that
$$ 
[\phi_t]_{q^0}=\sum_{\ell\in E_8}f(0,\ell)e^{2\pi i (\ell,\mathfrak{z)}} \in \CC[\orb(w_i),1\leq i \leq 8]
$$
and $\orb(w_i)$, $1\leq i \leq 8$, are algebraically independent over $\CC$. We put 
$$
\Lambda_{+}=\left\{m\in E_8: (\alpha_i,m)\geq 0, 1\leq i \leq 8 \right\}=\bigoplus_{i=1}^8\NN w_i,
$$
which is the closure of a Weyl chamber.
We recall the following standard facts:
\begin{enumerate}
\item[(a)] Every $W(E_8)$-orbit in $E_8$ meets the set $\Lambda_{+}$ in exactly one point (see \cite[Th\'{e}or\`{e}me VI.1.2(ii)]{B}).
\item[(b)] Define a partial order on $E_8$ by $m\geq m'$ if $m-m'\in\bigoplus_{i=1}^8 \RR_{+}\alpha_i$. Then $m\geq \sigma(m)$ holds for all $m\in \Lambda_{+}$ and $\sigma\in W(E_8)$  (see \cite[Prop.VI.1.18]{B}).
\item[(c)] For each $m\in \Lambda_{+}$, there are only finitely many $m'\in \Lambda_{+}$ satisfying $m\geq m'$ (see \cite[p.187]{B}).
\end{enumerate}

By the fact (a), we have 
\begin{equation}\label{*}
[\phi_t]_{q^0}=\sum_{m\in \Lambda_{+}} c(m) \orb(m).
\end{equation}

For $m\in \Lambda_{+}$ with $m=\sum_{i=1}^8x_iw_i$, $x_i\in \NN$, we define 
\begin{align*}
T(m)=&(m,w_8)\\
=&2x_1+3x_2+4x_3+6x_4+5x_5+4x_6+3x_7+2x_8.
\end{align*}
When $T(m)>t$, we have 
\begin{align*}
(m-tw_8,m-tw_8)=(m,m)-2t(m,w_8)+2t^2< (m,m).
\end{align*}
In this case, by Lemma \ref{lem2.3}, there will be a Fourier coefficient of type $q^{n_0}e^{2\pi i (m-tw_8,\mathfrak{z})}$ with $n_0<0$, which contradicts the definition of weak Jacobi forms. Thus, we obtain 
\begin{equation}\label{**}
[\phi_t]_{q^0}=\sum_{\substack{ m\in \Lambda_{+}\\ T(m)\leq t}} c(m) \orb(m), 
\end{equation}
Define 
$$f_m=\prod_{i=1}^8\orb(w_i)^{x_i}, \quad m=\sum_{i=1}^8x_iw_i.$$
By the facts (b) and (c), the product $f_m$ can be written as a finite sum 
\begin{equation}\label{***}
f_m=\orb(m) +\sum_{\substack{ m_1\in \Lambda_{+}\\ m_1<m}}c_{m,m_1}\orb(m_1).
\end{equation}
We note that $ m_1<m$ implies $T(m_1)\leq T(m)$ because $m-m_1>0$ and $w_8$ is the highest root of $E_8$, which yield
\begin{align*}
T(m)- T(m_1)=(m,w_8)-(m_1,w_8)=(m-m_1,w_8)\geq 0.
\end{align*}
We therefore establish the desired formula by equations \eqref{**}, \eqref{***} and $(c)$.
\end{proof}

\begin{lemma}\label{lem2}
The space $J_{*,E_8,t}^{\w ,W(E_8)}$ of $W(E_8)$-invariant weak Jacobi forms of index $t$ is a free module of rank $\leq r(t)$ over $M_*$, where $r(t)$ is defined by $(\ref{eq1})$.
\end{lemma}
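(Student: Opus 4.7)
The plan is to bound $\dim_\CC J_{k,E_8,t}^{\w,W(E_8)}$ weight-by-weight using the $q^0$-term map, and then extract the rank $R(t)$ by comparing asymptotic growth with the free $M_*$-module structure provided by Lemma \ref{lem: module}.

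First, I would use Lemma \ref{lem1} to describe the target of the $q^0$-term map. For each $k$ the $\CC$-linear map
$$F_k \colon J_{k,E_8,t}^{\w,W(E_8)} \longrightarrow V_t := \operatorname{span}_{\CC}\Bigl\{ \prod_{i=1}^8 \orb(w_i)^{x_i} : X\in\NN^8,\ T(X)\leq t \Bigr\},\qquad \varphi \longmapsto [\varphi]_{q^0},$$
is well-defined, and since $\orb(w_1),\ldots,\orb(w_8)$ are algebraically independent over $\CC$, $\dim_\CC V_t$ counts the tuples $X\in\NN^8$ with $T(X)\leq t$. With the weights $(w_i,w_8)=(2,3,4,6,5,4,3,2)$ for $i=1,\dots,8$, a routine partition generating function computation gives
$$\sum_{t\geq 0}(\dim V_t)\,x^t \;=\; \frac{1}{1-x}\prod_{i=1}^{8}\frac{1}{1-x^{(w_i,w_8)}} \;=\; \frac{1}{(1-x)(1-x^2)^2(1-x^3)^2(1-x^4)^2(1-x^5)(1-x^6)},$$
so $\dim_\CC V_t = r(t)$.

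Next, I would identify the kernel of $F_k$ as $\Delta\cdot J_{k-12,E_8,t}^{\w,W(E_8)}$. Indeed, if $\varphi\in\ker F_k$ then $\varphi=O(q)$ at the cusp, and since $\Delta=q\prod_{n\geq 1}(1-q^n)^{24}$ is non-vanishing on $\HH$ with a simple zero at the cusp, the quotient $\varphi/\Delta$ is holomorphic everywhere, has the modular transformation of weight $k-12$ and Heisenberg index $t$, and inherits $W(E_8)$-invariance; the converse inclusion is immediate. Writing $d(k):=\dim_\CC J_{k,E_8,t}^{\w,W(E_8)}$, this yields
$$d(k)\;\leq\;d(k-12)+r(t).$$
Since $d(k)=0$ for $k$ below the smallest weight $k_{\min}$ of a basis element of the finite-rank free $M_*$-module (Lemma \ref{lem: module}), iterating this inequality gives the linear upper bound $d(k)\leq r(t)\cdot(k-k_{\min})/12 + r(t)$ for all $k\geq k_{\min}$.

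On the other hand, by Lemma \ref{lem: module}, $J_{*,E_8,t}^{\w,W(E_8)}$ is free over $M_*=\CC[E_4,E_6]$ of rank $R(t)$; fixing a basis of weights $k_1,\ldots,k_{R(t)}$ yields the Hilbert--Poincar\'e series
$$\sum_k d(k)\,x^k \;=\; \frac{x^{k_1}+\cdots+x^{k_{R(t)}}}{(1-x^4)(1-x^6)},$$
which has a double pole at $x=1$ with $\lim_{x\to 1^-}(1-x)^2\sum_k d(k)x^k = R(t)/24$. The linear upper bound of the previous paragraph bounds the same limit above by $r(t)/24$, forcing $R(t)\leq r(t)$. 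The main obstacle is the description $\ker F_k=\Delta\cdot J_{k-12,E_8,t}^{\w,W(E_8)}$: one must verify that division by $\Delta$ preserves every axiom of Definition \ref{def}, in particular the condition of having a Fourier expansion in nonnegative powers of $q$. The combinatorial verification that the generating function for $\dim V_t$ matches \eqref{eq1} is then essentially an inspection.
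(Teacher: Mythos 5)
Your argument is correct in substance, but it extracts the rank bound by a genuinely different mechanism than the paper. Both proofs rest on the same two ingredients: Lemma \ref{lem1}, which confines $q^0$-terms to the $r(t)$-dimensional span of the monomials $\prod\orb(w_i)^{x_i}$ with $T(X)\leq t$, and the observation that a weak form with vanishing $q^0$-term is $\Delta$ times a weak form of weight $12$ less. The paper, however, works at a \emph{single} weight: it takes a homogeneous $M_*$-basis $\{\psi_i\}$ of weights $a_i$, multiplies by Eisenstein series $E_{a+4-a_i}$ to bring everything to weight $a+4$, and notes that if $R(t)>r(t)$ some nontrivial combination has zero $q^0$-term; dividing that combination by $\Delta$ and re-expanding in the basis forces $c_iE_{a+4-a_i}=\Delta g_i$, which is absurd since Eisenstein series are not cuspidal. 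You instead run the exact sequence $0\to\Delta J_{k-12,E_8,t}^{\w,W(E_8)}\to J_{k,E_8,t}^{\w,W(E_8)}\to V_t$ over all weights and compare the resulting linear growth of $\dim J_{k,E_8,t}^{\w,W(E_8)}$ with the order-two pole of the Hilbert series of a free $\CC[E_4,E_6]$-module of rank $R(t)$. Your route is slightly longer but arguably more robust (it never needs to choose Eisenstein series of the right weights, and it makes the role of the Poincar\'e series explicit); the paper's is shorter and avoids any asymptotics.

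One point you must make explicit for the constants to come out right: the residue $\lim_{x\to 1^-}(1-x)^2\sum_k d(k)x^k\leq r(t)/24$ only follows from the bound $d(k)\leq d(k-12)+r(t)$ if you also use that $d(k)=0$ for odd $k$ (so only the six even residue classes mod $12$ contribute, giving $6\cdot r(t)/144=r(t)/24$). Summing your stated linear bound over \emph{all} $k\geq k_{\min}$ would give $r(t)/12$ and hence only $R(t)\leq 2r(t)$, which does not suffice. The even-weight fact is available — the paper records in \S\ref{Sec:3} that $-\id\in W(E_8)$ forces all $W(E_8)$-invariant weak Jacobi forms to have even weight — but your proof genuinely needs it, whereas the paper's single-weight argument does not.
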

\begin{proof}
Conversely, suppose that the rank $R(t)$ of $J_{*,E_8,t}^{\w ,W(E_8)}$ over $M_*$ is larger than $ r(t)$. We assume that $\{\psi_i: 1\leq i \leq R(t)\}$ is a basis of $J_{*,E_8,t}^{\w ,W(E_8)}$ over $M_*$ and the weight of $\psi_i$ is $a_i$. We put $a=\max\{a_i:1\leq i \leq R(t)\}$.
According to Lemma \ref{lem1}, $q^0$-terms of $W(E_8)$-invariant weak Jacobi forms of index $t$ can be written as $\CC$-linear combinations of $r(t)$ fundamental elements $f_m=\prod_{i=1}^8\orb(w_i)^{x_i}, m=\sum_{i=1}^8x_iw_i$ with $T(m)\leq t$.  Since $R(t)>r(t)$, there exists a homogenous polynomial $P\neq 0$ of degree one over $M_*$ such that 
$$P(\psi_i,1\leq i \leq R(t))=\sum_{i=1}^{R(t)}c_iE_{a+4-a_i}\psi_i=O(q)\in J_{a+4,E_8,t}^{\w ,W(E_8)},$$
where $c_i$ are constants and $E_{a+4-a_i}$ are Eisenstein series of weight $a+4-a_i$ on $\SL_2(\ZZ)$.
Hence $P(\psi_i,1\leq i \leq R(t))/\Delta \neq 0 \in J_{a-8,E_8,t}^{\w ,W(E_8)}$ and thus it is a linear combination of $\psi_i$ over $M_*$, which is impossible. 
\end{proof}

Some values of rank $r(t)$ are shown in Table \ref{tablerank}.

\begin{table}[ht]
\caption{Rank of $J_{*,E_8,t}^{\w ,W(E_8)}$ over $M_*$}\label{tablerank}
\renewcommand\arraystretch{1.5}
\noindent\[
\begin{array}{|c|c|c|c|c|c|c|c|c|c|c|c|c|c|c|}
\hline 
t & 1 & 2 & 3 & 4 & 5 & 6 & 7 & 8 & 9 & 10 & 11 & 12 & 13 & 14 \\ 
\hline 
r(t) & 1 & 3 & 5 & 10 & 15 & 27 & 39 & 63 & 90 & 135 & 187 & 270 & 364 & 505 \\ 
\hline 
\end{array} 
\]
\end{table}

\begin{remark}\label{remark:Wir}
If Wirthm\"{u}ller's theorem holds for $R=E_8$, then the weights and indices of the nine generators are as follows (see Theorem \ref{thwi})
\begin{align*}
&(0,1)& &(-2,2)& &(-8,2)& &(-12,3)& &(-14,3)& \\
&(-18,4)& &(-20,4)& &(-24,5)& &(-30,6).&
\end{align*}
Therefore, Theorem \ref{MTH} implies that the statement about the indices of generators in Wirthm\"{u}ller's theorem holds for $R=E_8$.

We note that Lemma \ref{lem1} can be extended to any irreducible root system using the following facts:
\begin{itemize}
\item The weight lattice $\Lambda(R^\vee)$ of $R^\vee$ is isomorphic to the dual lattice of $L(R)$.
\item The multiplicative invariant algebra $\ZZ[\Lambda(R^\vee)]^{W(R)}$ is a polynomial algebra over $\ZZ$: the Weyl orbits of the fundamental weights of $R^\vee$ are algebraically independent generators (see \cite[Th\'{e}or\`{e}me VI.3.1 and Exemple 1]{B}).
\item $T(l)$ can be defined as $(l,\widetilde{\alpha}^\vee)$ (see \S \ref{Sec:2} for $\widetilde{\alpha}^\vee$). If $T(l)$ is greater than the index $t$, then the norm of $l-t\widetilde{\alpha}^\vee$ is smaller than the norm of $l$.
\item $l_1 < l_2$ implies  $T(l_1)\leq T(l_2)$, for any $l_1$, $l_2$ in the dual lattice of $L(R)$.
\end{itemize}
Thus, by virtue of the analogues of Lemmas \ref{lem1}, \ref{lem2}, we can give a new proof of the fact about the indices of generators in Wirthm\"{u}ller's theorem. 

The fact about the weights of generators is related to the Taylor expansion of basic weak Jacobi forms at the point $\mathfrak{z}=0$.  We next explain it more precisely. Given an irreducible root system $R$ of rank $r$, if we could find $r+1$ basic weak Jacobi forms $\phi_j$ of expected indices whose $q^0$-terms contain the corresponding Weyl orbits of the fundamental weights, then we may show by the above arguments that these Jacobi forms are algebraically independent over $M_*$ and that for each Jacobi form $\varphi$, there exist a modular form $f$ with non-zero constant term and a non-zero polynomial $P\in M_*[X_j,0\leq j \leq r]$ such that $f\varphi=P(\phi_j,0\leq j \leq r)$. If $f$ is not constant, then $f$ vanishes at a point $\tau_0\in\HH$. We observe that the leading terms of the Taylor expansion of $\phi_j$ are a homogenous $W(R)$-invariant polynomial of degree equal to the absolute value of the weight of $\phi_j$. Therefore, if these $\phi_j$ further have the expected weights and the generators of $W(R)$-invariant polynomials appear in their leading terms of Taylor expansions, then it is possible to prove that $\phi_j(\tau_0,\mathfrak{z})$ are algebraically independent over $\CC$ using the fact that the $r$ generators of $W(R)$-invariant polynomials are algebraically independent over $\CC$, which gives a  contradiction. Then $f$ is a constant and we deduce that the ring of Jacobi forms is the polynomial algebra generated by $\phi_j$ over $M_*$. The above discussions give an explanation of why the ring of Weyl invariant weak Jacobi forms is possible to be a polynomial algebra.
\end{remark}

\begin{remark}
Our main theorem shows that every $W(E_8)$-invariant weak Jacobi form can be expressed uniquely as a polynomial in $A_i$ and $B_j$ with coefficients which are meromorphic $\SL_2(\ZZ)$ modular forms (the quotients of holomorphic modular forms). By the structure results in the next section, these meromorphic modular forms are in fact holomorphic except at infinity when the index is less than or equal to $3$. In other words, we have
$$    
J_{*,E_8,t}^{\w ,W(E_8)} \subsetneq M_*\left[\frac{1}{\Delta}\right]\left[A_1,A_2,A_3,A_4,A_5,B_2,B_3,B_4,B_6 \right], \quad t=1,2,3.
$$

But when the index is larger than $3$, it is very likely that the above meromorphic modular forms have a pole at one point $\tau_0\in \HH$, which is different from the case in \cite{EZ}. In \cite{ZGHPKL}, the authors checked numerically that the $W(E_8)$-invariant holomorphic Jacobi form of weight $16$ and index $5$ defined as
$$
P= 864A_1^3A_2+21E_6^2A_5-770E_6A_3B_2+3825A_1B_2^2-840E_6A_2B_3+60E_6A_1B_4
$$
vanishes at the zero points $\tau=\pm \frac{1}{2}+\frac{\sqrt{3}}{2}i$ of $E_4$ for general $E_8$ elliptic parameters.  If the zeros of $P$ and $E_4$ do indeed coincide, then $P/E_4$ will be a $W(E_8)$-invariant holomorphic Jacobi form of weight $12$ and index $5$.
\end{remark}

\begin{remark}
In some sense, the choice of generators $A_i$ and $B_j$ in our main theorem is optimal. By the structure theorems in the next section, it is very natural to choose $A_1$, $A_2$, $A_3$, $A_5$, $B_2$, $B_3$ as generators because the corresponding spaces are all  one-dimensional. There are $2$ independent $W(E_8)$-invariant holomorphic Jacobi forms of weight $4$ and index $4$. One is $A_4$ and the other is $X_4$ (see (\ref{X})). But $\Delta X_4$ can be expressed as a polynomial in our generators $A_1$, $A_2$, $A_3$, $A_4$, $B_2$, $B_3$ and Eisenstein series $E_4$, $E_6$. Therefore, we cannot choose $X_4$ instead of $B_4$. Besides, $B_6$ cannot be replaced by $X_6$ because $\Delta^2E_4 X_6$ can be expressed as a polynomial in our generators $A_1$, $A_2$, $A_3$, $A_4$, $A_5$, $B_2$, $B_3$, $B_4$ and $E_4$, $E_6$.
\end{remark}

\section{ \texorpdfstring{$W(E_8)$}{W(E8)}-invariant Jacobi forms of small index}\label{Sec:5}

It is well-known that the space $J^{\w ,W(E_8)}_{*, E_8,1}=J^{W(E_8)}_{*, E_8,1}$ of $W(E_8)$-invariant weak (or holomorphic) Jacobi forms of index $1$ is a free module over $M_*$ generated by the theta function  $\vartheta_{E_8}$. In this big section, we give explicit descriptions of the structure of $J^{\w ,W(E_8)}_{*, E_8,t}$ and construct the generators when $t=2$, $3$, $4$. The cases of index $5$ and $6$ are also discussed. We develop two approaches to do this. The first one is based on the differential operators and the second relies on the pull-backs from $W(E_8)$-invariant Jacobi forms to the classical Jacobi forms for $A_1$.

\subsection{Notations and basic lemmas}\label{Subsec:5.1}
In this subsection we present a new way to characterize $q^0$-terms of $W(E_8)$-invariant Jacobi forms. This new way is convenient to calculate $q^0$-terms of Jacobi forms under the action of differential operators. Let us denote by $R_{2n}$ the set of all vectors $\ell \in E_8$ with $(\ell,\ell)=2n$. The Weyl group $W(E_8)$ acts on $R_{2n}$ in the usual way. The next lemma shows the orbits of $R_{2n}$ under the action of $W(E_8)$.

\begin{lemma}\label{lemorbit}
The orbits of $R_{2n}$ under the action of $W(E_8)$ are given by
\begin{align*}
&W(E_8)\backslash R_2=\{ w_8 \}& &W(E_8)\backslash R_4=\{ w_1 \}&\\
&W(E_8)\backslash R_6=\{ w_7 \}& &W(E_8)\backslash R_8=\{ 2w_8,w_2 \}&\\
&W(E_8)\backslash R_{10}=\{ w_1+w_8 \}& &W(E_8)\backslash R_{12}=\{ w_6 \}&\\
&W(E_8)\backslash R_{14}=\{w_3, w_7+w_8 \}& &W(E_8)\backslash R_{16}=\{2w_1, w_2+w_8 \}\\
&W(E_8)\backslash R_{18}=\{w_1+w_7, 3w_8 \}& &W(E_8)\backslash R_{20}=\{w_5, w_1+2w_8 \}\\
&W(E_8)\backslash R_{22}=\{w_6+w_8, w_1+w_2 \}& &W(E_8)\backslash R_{24}=\{2w_7, w_3+w_8 \}.&
\end{align*}
\end{lemma}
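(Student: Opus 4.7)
The plan is to use the bijection between $W(E_8)$-orbits in $E_8$ and dominant weights in $\Lambda_+=\bigoplus_{i=1}^8\NN w_i$, already recalled as fact (a) in the proof of Lemma \ref{lem1}. Under this bijection, finding $W(E_8)\backslash R_{2n}$ amounts to listing all $m=\sum_{i=1}^8 x_iw_i$ with $x_i\in\NN$ and $(m,m)=2n$, which is a purely combinatorial/arithmetic task.

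Concretely, I would first record the Gram matrix $G=\bigl((w_i,w_j)\bigr)_{1\le i,j\le 8}$ of the fundamental weights (equivalently, the inverse of the Cartan matrix of $E_8$). Using the explicit coordinates of $w_1,\ldots,w_8$ listed in the preceding subsection, one computes the diagonal
\[
(w_i,w_i)=4,\,8,\,14,\,30,\,20,\,12,\,6,\,2\qquad (i=1,\ldots,8),
\]
while all off-diagonal entries of $G$ are strictly positive integers. In particular $w_8$ is the unique fundamental weight of minimal squared length $2$, consistent with the fact that $w_8$ is the highest root.

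With $G$ in hand, for each $n\in\{1,\ldots,12\}$ one enumerates all tuples $X=(x_1,\ldots,x_8)\in\NN^8$ solving the quadratic equation
\[
(m,m)=\sum_{i,j=1}^8 x_ix_j(w_i,w_j)=2n.
\]
Because every entry of $G$ is non-negative, the individual bound $x_i^2(w_i,w_i)\le 2n$ holds, so for the range $1\le n\le 12$ only a very short finite list of candidates needs to be inspected. A direct arithmetic check with $G$ then identifies the candidates satisfying $(m,m)=2n$ and matches them with the representatives in the statement.

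The only real obstacle is the case-by-case bookkeeping in the larger cases $n=8,\ldots,12$, where several dominant weights must be both exhibited and shown to exhaust the list. Both directions are handled uniformly by the a priori coordinate bound together with the tabulated Gram matrix, and the outputs are exactly the orbit representatives displayed in the lemma.
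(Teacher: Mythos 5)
Your proposal is correct and is essentially the paper's own proof: the paper likewise invokes fact (a) from the proof of Lemma \ref{lem1} (each $W(E_8)$-orbit meets $\Lambda_+=\bigoplus_i\NN w_i$ in exactly one point) and then identifies the orbits "by direct calculations," which is precisely the finite enumeration of dominant weights of given norm via the Gram matrix $\bigl((w_i,w_j)\bigr)$ that you describe. Your diagonal norms $(w_i,w_i)=4,8,14,30,20,12,6,2$ and the positivity of the off-diagonal entries are correct, so the a priori bounds do make the check finite and the resulting lists agree with the lemma.
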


\begin{proof}
Applying the fact (a) in the proof of Lemma \ref{lem1}, we can prove the lemma by direct calculations.
\end{proof}

Corresponding to the above orbits, we define the following Weyl orbits.
\begin{align*}
&\sum_2=  \orb(w_8)& &\sum_4= * \orb(w_1)&
&\sum_6= * \orb(w_7)&\\
&\sum_{8'}= * \orb(w_2)&
&\sum_{8{''}}= * \orb(2w_8)& &\sum_{10}= * \orb(w_1+w_8)&\\
&\sum_{12}= * \orb(w_6)& &\sum_{{14}'}= * \orb(w_3)&
&\sum_{{14}{''}}= * \orb(w_7+w_8)&\\
&\sum_{{16}'}= * \orb(2w_1)&
&\sum_{{16}{''}}= * \orb(w_2+w_8)& &\sum_{{18}'}= * \orb(w_1+w_7)&\\
&\sum_{{18}{''}}= * \orb(3w_8)& &\sum_{{20}'}= * \orb(w_5)&
&\sum_{{20}{''}}= * \orb(w_1+2w_8)& \\
&\sum_{{22}'}= * \orb(w_1+w_2)&
&\sum_{{22}{''}}= * \orb(w_6+w_8)& &\sum_{{24}'}= * \orb(2w_7)&  \\
&\sum_{{24}{''}}= * \orb(w_3+w_8)& &\sum_{{26}'}= * \orb(2w_1+w_8)&  
&\sum_{{26}{''}}= * \orb(w_2+w_7)&\\ &\sum_{{28}'}= * \orb(w_1+w_6)& 
&\sum_{{30}'}= * \orb(w_4)& &\sum_{{32}'}= * \orb(w_1+w_3)&  \\
&\sum_{{32}{''}}= * \orb(2w_2)& &\sum_{{36}'}= * \orb(3w_1)&  
\end{align*}

The normalizations of these Weyl orbits are chosen such that they reduce to $240$ if one takes $\mathfrak{z}=0$. By Lemma \ref{lem2.3} and Equation \eqref{**}, it is easy to prove the next three lemmas.

\begin{lemma}\label{lem2.4}
We have the following estimations
\begin{align*}
&\max\left\{\min\{(v,v): v\in l+ 2E_8 \}: l\in E_8 \right\}=4,\\
&\max\left\{\min\{(v,v): v\in l+ 3E_8 \}: l\in E_8 \right\}=8,\\
&\max\left\{\min\{(v,v): v\in l+ 4E_8 \}: l\in E_8 \right\}=16,\\
&\max\left\{\min\{(v,v): v\in l+ 5E_8 \}: l\in E_8 \right\}=22,\\
&\max\left\{\min\{(v,v): v\in l+ 6E_8 \}: l\in E_8 \right\}=36.
\end{align*}
\end{lemma}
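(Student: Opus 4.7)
The quantity being computed is, for each $t$, the maximum over cosets of $tE_8$ in $E_8$ of the minimum squared norm of a coset representative. I would establish the five identities separately, in each case exhibiting a specific $\ell \in E_8$ attaining the lower bound and independently proving the matching upper bound that every coset contains a sufficiently short representative.

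For the lower bound I would choose the following witnesses. For the even indices $t \in \{2, 4, 6\}$, take $\ell = (t, 0, \ldots, 0) \in D_8 \subset E_8$, which has norm $t^2$; since $e_1 \notin E_8$ but every $E_8$-vector closest to $e_1$ (namely $0$ and $e_1 \pm e_j$ for $j > 1$) lies at distance exactly $1$, one has $|\ell - tw|^2 = t^2 |e_1 - w|^2 \geq t^2$ for every $w \in E_8$, giving $M_2 \geq 4$, $M_4 \geq 16$, $M_6 \geq 36$. For $t = 3$ the witness is the fundamental weight $w_2 = \tfrac{1}{2}(1,1,1,1,1,1,1,5) \in E_8$ of norm $8$, and for $t = 5$ the witness is $w_1 + w_2 = \tfrac{1}{2}(1,1,1,1,1,1,1,9) \in E_8$ of norm $22$. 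In both of these cases a finite enumeration of $w \in E_8$ with $|\ell - tw|^2 \leq M_t$ confirms that $\ell$ is itself a shortest representative of its coset.

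For the upper bound, the case $t = 2$ admits a clean counting proof. Distinct roots $\alpha \neq \beta$ congruent modulo $2E_8$ force $|\alpha - \beta|^2 \geq 8$ and hence $\beta = -\alpha$, so the $240$ roots occupy exactly $120$ cosets. An analogous analysis of norm-$4$ vectors (where the inner product of two distinct congruent vectors is forced into $\{-4, 0\}$, yielding $16$ norm-$4$ vectors per coset) shows that the $2160$ norm-$4$ vectors account for exactly $135$ cosets; together with the unique zero coset, this gives $1 + 120 + 135 = 256 = |E_8/2E_8|$, exhausting all cosets. For $t \in \{3, 4, 5, 6\}$ I would use the action of $W(E_8)$ on the finite quotient $E_8/tE_8$, which preserves shortest norms, to reduce the problem to checking the minimum norm on each of a finite list of orbit representatives.

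The principal obstacle is the sharp upper bound for the odd cases $t = 3, 5$. The global covering radius $\sqrt{2}$ of $E_8$ only yields the weaker estimate $M_t \leq 2t^2$, giving $18$ for $t = 3$ and $50$ for $t = 5$, both well above the announced values $8$ and $22$. The gap reflects the subtler fact that the deepest holes of $E_8$ do not lie in $\tfrac{1}{t}E_8$ for these two indices, so the tight bound must be proved by honest orbit-by-orbit analysis. The case $t = 5$ is the most computationally involved, since the number of $W(E_8)$-orbits on $E_8/5E_8$ is substantial and one must identify $w_1 + w_2$ as representing an extremal orbit while verifying that every other orbit contains a representative of squared norm at most $22$.
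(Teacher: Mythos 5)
Your lower bounds are correct and efficiently organized: the witnesses $te_1$ for $t=2,4,6$, $w_2$ for $t=3$ and $w_1+w_2$ for $t=5$ are the right ones, and verifying that each is shortest in its coset reduces, via $t(w,w)<2(\ell,w)$ together with Cauchy--Schwarz, to a check over roots $w$, where $\max_{r\in R_2}(\ell,r)=(\ell,w_8)=T(\ell)\le t$ settles it. The counting proof of the upper bound for $t=2$ is also correct ($1+120+135=256$). The genuine gap is the upper bound for $t=3,4,5,6$: you reduce it to ``checking the minimum norm on each of a finite list of orbit representatives'' of $W(E_8)$ acting on $E_8/tE_8$, but you neither produce such a list nor explain how its completeness would be certified; for $t=5$ the quotient has $5^8$ elements, and this finite check is exactly where all the content of the lemma lives. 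The reduction the paper relies on (already established in the proof of Lemma \ref{lem1}) is what is missing from your argument: if $m\in\Lambda_{+}$ is dominant with $T(m)=(m,w_8)>t$, then $(m-tw_8,m-tw_8)=(m,m)-2tT(m)+2t^2<(m,m)$, so $m$ is not a shortest vector of its coset modulo $tE_8$. Since every coset has a shortest vector whose dominant Weyl conjugate is shortest in its own coset, the max--min is attained among the dominant weights with $T(m)\le t$ --- a list of only $r(t)\le 27$ elements on which the maximal norm is read off directly ($w_1$, $w_2$, $2w_1$, $w_1+w_2$, $3w_1$ for $t=2,\dots,6$).

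Separately, your appeal to the covering radius contains a factual error that works against you: the covering radius of $E_8$ is $1$, not $\sqrt{2}$ (the deep holes are the points equivalent to $e_1$ under $W(E_8)\ltimes E_8$). The correct value gives $\min\{(v,v):v\in l+tE_8\}\le t^2$ for every $l\in E_8$, which combined with your witnesses finishes the even cases $t=2,4,6$ outright, with no counting argument needed; only the improvements $9\to 8$ for $t=3$ and $25\to 22$ for $t=5$ remain, and for those the dominant-weight reduction above is still the efficient route.
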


\begin{lemma}
Let $\varphi_t$ be a $W(E_8)$-invariant weak Jacobi form of index $t$. Then its $q^0$-term can be written as
\begin{align*}
[\varphi_2]_{q^0}=&240c_0 +c_1 \sum_{2} + c_2 \sum_{4},\\
[\varphi_3]_{q^0}=&240c_0 +c_1 \sum_{2} + c_2 \sum_{4}+c_3\sum_{6}+c_4\sum_{8'},\\
[\varphi_4]_{q^0}=&240c_0 +c_1 \sum_{2} + c_2 \sum_{4}+c_3\sum_{6}+c_4'\sum_{8'}+c_4{''}\sum_{8{''}}+c_5\sum_{10}+c_6\sum_{12}+c_7\sum_{{14}'}+c_8\sum_{{16}'},
\end{align*}
\begin{align*}
[\varphi_5]_{q^0}=&240c_0 +c_1 \sum_{2} + c_2 \sum_{4}+c_3\sum_{6}+c_4'\sum_{8'}+c_4{''}\sum_{8{''}}+c_5\sum_{10}+c_6\sum_{12}
+c_7'\sum_{{14}'}+c_7{''}\sum_{{14}{''}}\\
&+c_8'\sum_{{16}'}+c_8{''}\sum_{{16}{''}}+ c_9\sum_{{18}'}+c_{10}\sum_{{20}'}+c_{11}\sum_{{22}'},\\
[\varphi_6]_{q^0}=&240c_0 +c_1 \sum_{2} + c_2 \sum_{4}+c_3\sum_{6}+c_4'\sum_{8'}+c_4{''}\sum_{8{''}}+c_5\sum_{10}+c_6\sum_{12}+c_7'\sum_{{14}'}+c_7{''}\sum_{{14}{''}}\\&+c_8'\sum_{{16}'}+c_8{''}\sum_{{16}{''}}+ c_9'\sum_{{18}'}+c_9{''}\sum_{{18}{''}}+c_{10}'\sum_{{20}'}+c_{10}{''}\sum_{{20}{''}}+c_{11}'\sum_{{22}'}+c_{11}{''}\sum_{{22}{''}}+c_{12}'\sum_{{24}'}\\
&+c_{12}{''}\sum_{{24}{''}}+c_{13}'\sum_{{26}'}+c_{13}{''}\sum_{{26}{''}}+c_{14}\sum_{{28}'}+c_{15}\sum_{{30}'}+c_{16}'\sum_{{32}'}+c_{16}{''}\sum_{{32}{''}}+c_{18}\sum_{{36}'},
\end{align*}
where $c_i \in \CC$ are constants.
\end{lemma}

\begin{lemma}
\label{Lemtest}
Assume that $\varphi$ is a $W(E_8)$-invariant weak Jacobi form of index $t$.
\begin{enumerate}
\item Let $t=2$. Then $\varphi$ is a holomorphic Jacobi form if and only if its $q^0$-term is a constant. Moreover, $\varphi$ is a Jacobi cusp form if and only if its $q^0$-term is $0$ and its $q^1$-term is of the form $ c_0 +c_1 \sum_{2}$.
\item Let $t=3$. Then $\varphi$ is a holomorphic Jacobi form if and only if its $q^0$-term is a constant and its $q^1$-term is of the form
$$ 240c_0+c_1\sum_{2}+c_2 \sum_{4}+c_3\sum_{6}.$$
Moreover, a holomorphic Jacobi form $\varphi$ is a Jacobi cusp form if and only if $c_3=0$ and its $q^0$-term is $0$.
\item Let $t=4$. Then $\varphi$ is a holomorphic Jacobi form if and only if its $q^0$-term is a constant and its $q^1$-term is of the form
$$ 240c_0+c_1\sum_{2}+c_2 \sum_{4}+c_3\sum_{6}+c_4'\sum_{8'}+c_4{''}\sum_{8{''}}.$$
Moreover, a holomorphic Jacobi form $\varphi$ is a Jacobi cusp form if and only if $c_4'=c_4{''}=0$ and its $q^0$-term is $0$ and its $q^2$-term does not contain the term $\sum_{16'}$.
\item Let $t=5$. Then $\varphi$ is a holomorphic Jacobi form if and only if its $q^0$-term is a constant and its $q^1$-term is of the form
$$240c_0 +c_1 \sum_{2} + c_2 \sum_{4}+c_3\sum_{6}+c_4'\sum_{8'}+c_4{''}\sum_{8{''}}+c_5\sum_{10}$$
and its $q^2$-term does not contain the term $\sum_{22'}$.
Moreover, a holomorphic Jacobi form $\varphi$ is a Jacobi cusp form if and only if $c_5=0$ and its $q^0$-term is $0$ and its $q^2$-term does not contain the term $\sum_{20'}$.
\item Let $t=6$. Then $\varphi$ is a holomorphic Jacobi form if and only if its $q^0$-term is a constant and its $q^1$-term is of the form
$$240c_0 +c_1 \sum_{2} + c_2 \sum_{4}+c_3\sum_{6}+c_4'\sum_{8'}+c_4{''}\sum_{8{''}}+c_5\sum_{10}+c_6\sum_{12}$$
and its $q^2$-term does not contain the terms $\sum_{26'}$, $\sum_{26{''}}$, $\sum_{28'}$, $\sum_{30'}$, $\sum_{32'}$, $\sum_{32{''}}$, $\sum_{36'}$.
Moreover, a holomorphic Jacobi form $\varphi$ is a Jacobi cusp form if and only if $c_6=0$ and its $q^0$-term is $0$ and its $q^2$-term does not contain the terms $\sum_{24'}$, $\sum_{24{''}}$ and its $q^3$-term does not contain the term $\sum_{36'}$.
\end{enumerate}

\end{lemma}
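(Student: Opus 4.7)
The plan is to apply Lemma \ref{lem2.3} in a finite case analysis. For a $W(E_8)$-invariant weak Jacobi form $\varphi$ of index $t$, that lemma asserts that the Fourier coefficient $f(n,\ell)$ depends only on the class $c=\ell+tE_8$ and on $N=2nt-(\ell,\ell)$; holomorphicity then amounts to $f(n,\ell)=0$ whenever $N<0$, and cuspidality to the same with $N\le 0$. Hence the problem reduces to determining, for each pair $(c,N)$ of interest, the smallest $n_0$ at which it becomes visible in the Fourier expansion of $\varphi$, and translating the vanishing of $f(n_0,\ell_0)$ into a condition on the Weyl orbits $\sum_{2m}$ introduced in Subsection \ref{Subsec:5.1}.

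First I would, for each $t\in\{2,3,4,5,6\}$, enumerate the $W(E_8)$-orbits of small norm together with the information of which orbits fall into the same class of $E_8/tE_8$. Lemma \ref{lem2.4} bounds the maximum of $\mu_c=\min\{(v,v):v\in c\}$ and therefore controls how deep into the $q$-expansion one needs to look before every class is seen. For every admissible $N\le 0$ and every class $c$, the smallest $n_0$ realizing $(c,N)$ is found by choosing $\ell_0\in c$ of smallest norm congruent to $-N$ modulo $2t$ and setting $n_0=(N+(\ell_0,\ell_0))/(2t)$.

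Next I would translate these minimal representatives into the explicit conditions in the statement. All $N<0$ constraints at $q^0$ say exactly that $[\varphi]_{q^0}$ is a constant; any further $N<0$ constraint at $q^1$ or $q^2$ is either automatically implied by the $q^0$ condition (when the relevant class already meets a vector of strictly smaller norm) or corresponds to a genuinely new vanishing condition (when $\mu_c$ is maximal). For $t=2$ nothing new arises beyond $q^0$; for $t=3,4$ one obtains the stated shape of $[\varphi]_{q^1}$; for $t=5,6$ one additionally obtains a restriction on $[\varphi]_{q^2}$. The same bookkeeping with $N=0$ in place of $N<0$ yields the cuspidality conditions, producing exactly the $N=0$ orbits that are newly visible at each $q^n$.

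The main obstacle is the case-by-case verification of minimal norms inside the classes $c\in E_8/tE_8$ for $t=4,5,6$. Concretely, for each orbit $\sum_{2m}$ in Subsection \ref{Subsec:5.1} one must check whether a representative of it is congruent modulo $tE_8$ to a vector of strictly smaller norm: if so, the corresponding condition is already implied and contributes nothing new; if not, it yields a new vanishing condition appearing in the statement. These checks are elementary but numerous, and they are carried out using the explicit coordinate description of $E_8$ and of $W(E_8)$ recorded in Section \ref{Sec:3} together with Lemma \ref{lemorbit}.
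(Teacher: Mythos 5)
Your proposal is correct and follows essentially the same route as the paper, which disposes of this lemma with the single remark that it follows from Lemma \ref{lem2.3} (the coefficients $f(n,\ell)$ depend only on $\ell+tE_8$ and on $2nt-(\ell,\ell)$, with the stated lower bound) together with the orbit decomposition of the $q^0$-term; your reduction to the first occurrence $n_0$ of each pair $(c,N)$ with $N\le 0$, controlled by Lemma \ref{lem2.4} and the orbit list of Lemma \ref{lemorbit}, is exactly the intended case analysis.
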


We next explain how to determine holomorphic Jacobi forms of singular weight.
Let $\varphi_t$ be a $W(E_8)$-invariant holomorphic Jacobi form of weight $4$ and index $t$. In view of the singular weight, we have
$$ 
\varphi_{t}(\tau,\mathfrak{z})=\sum_{ n\in \NN}\sum_{\substack{\ell \in E_8\\ (\ell,\ell)=2nt}}f(n,\ell)e^{2\pi i (n\tau + (\ell,\mathfrak{z}))}.
$$
Therefore, the coefficients $f(n,\ell)$ depend only on the class of $\ell$ in $E_8/tE_8$.
Let $n\geq 1$ and  assume that
$$
\phi_t(\tau,\mathfrak{z})=q^n \sum_{\substack{\ell \in E_8\\ (\ell,\ell)=2nt}}f(n,\ell)e^{2\pi i (\ell,\mathfrak{z})}+O(q^{n+1})\in J^{W(E_8)}_{4,E_8,t}. 
$$
Since $\phi_t(\tau,0)=0$,
if there exists $\ell\in E_8$ such that $f(n,\ell)\neq 0$, then there exist $\ell_1,\ell_2 \in E_8$ satisfying $(\ell_1,\ell_1)=(\ell_2,\ell_2)=2nt$, $\orb(\ell_1)\neq \orb(\ell_2)$ and $(\ell_i,\ell_i)= \min\{(v,v): v\in \ell_i+tE_8  \}$, for $i=1,2$. From this, we deduce
\begin{align*}
&J^{W(E_8)}_{4,E_8,t}=\CC A_t, t=1,2,3,5,\\
&J^{W(E_8)}_{4,E_8,4}=\CC A_4 \oplus \CC X_4,\\
&1\leq \dim J^{W(E_8)}_{4,E_8,6} \leq 2.
\end{align*}
If $\dim J^{W(E_8)}_{4,E_8,6} =2$, then there exists a $W(E_8)$-invariant holomorphic Jacobi form of weight $4$ and index $6$ with Fourier expansion of the form
$$ 
F_{4,6}(\tau,\mathfrak{z})= q^2(\sum_{{24}'}-\sum_{{24}{''}})+O(q^3).
$$
Let $v_4$ be a vector of norm $4$ in $E_8$ and $z\in\CC$. By direct calculations, we see that   
$$
\frac{F_{4,6}(\tau,zv_4)}{\Delta^2(\tau)} =c_9\zeta^{\pm 9}+\sum_{1\leq j \leq 8} c(j)\zeta^{\pm j} +c(0) +O(q), \quad c_9\neq 0
$$
is a non-zero weak Jacobi form of weight $-20$ and index $12$ in the sense of Eichler and Zagier \cite{EZ}, where $\zeta=e^{2\pi i z}$ and $c(j)\in \CC$ are constants (see \S \ref{Subsec:5.6}). By \cite{EZ}, $J_{-20,12}^{\w}$ is generated by $E_4\phi_{-2,1}^{12}$ and $\phi_{-2,1}^{10}\phi_{0,1}^2$. Therefore there is a $\zeta^{\pm m}$ in the $q^0$-term of any weak Jacobi form of weight $-20$ and index $12$ satisfying $m\geq 11$, which leads to a contradiction. We have thus proved the following.
\begin{lemma}\label{Lem:singular}
\begin{align*}
&J^{W(E_8)}_{4,E_8,t}=\CC A_t, \quad t=1, 2, 3, 5,\\
&J^{W(E_8)}_{4,E_8,4}=\CC A_4 \oplus \CC X_4,\\
&J^{W(E_8)}_{4,E_8,6} =\CC X_6.
\end{align*}
\end{lemma}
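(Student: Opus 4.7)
The plan is to exploit that weight $4$ is the singular weight for $E_8$-indexed Jacobi forms. By Lemma~\ref{lem2.3} any $\varphi \in J^{W(E_8)}_{4,E_8,t}$ has Fourier expansion supported only on pairs $(n,\ell)$ with $(\ell,\ell) = 2nt$, and $f(n,\ell)$ is constant on each Weyl-orbit of $E_8/tE_8$. Evaluating at $\mathfrak{z} = 0$ yields a holomorphic modular form of weight $4$ on $\SL_2(\ZZ)$, forcing $\varphi(\tau,0) = c\, E_4(\tau)$. Since the form $X_t$ from (\ref{X}) satisfies $X_t(\tau,0) = E_4(\tau)$, I would subtract an appropriate multiple of $X_t$ to reduce the problem to analysing the subspace $K_t := \{\varphi \in J^{W(E_8)}_{4,E_8,t} : \varphi(\tau,0) = 0\}$.

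For a nonzero $\varphi \in K_t$ with lowest nonvanishing Fourier block at $q^{n_0}$, the cancellation at $\mathfrak{z}=0$ forces the support to involve at least two distinct $W(E_8)$-orbits of vectors $\ell \in E_8$ with $(\ell,\ell) = 2n_0 t$ and $\ell$ minimal in its class $\ell + tE_8$; the minimality is needed since otherwise a shorter representative would produce a nonzero block at a strictly lower level, contradicting the choice of $n_0$. Lemma~\ref{lem2.4} bounds the norms to inspect, and case analysis via Lemma~\ref{lemorbit} shows that for $t \in \{1,2,3,5\}$ only one minimal Weyl-orbit arises at each relevant norm, so $K_t = 0$ and $\dim J^{W(E_8)}_{4,E_8,t} = 1$. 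For $t = 4$, the two Weyl-orbits $\orb(2w_8)$ and $\orb(w_2)$ of $R_8$ are both minimal and give distinct classes modulo $4E_8$, contributing exactly one additional dimension, so $J^{W(E_8)}_{4,E_8,4} = \CC A_4 \oplus \CC X_4$.

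The main obstacle will be the case $t = 6$: orbit inspection alone only yields $1 \leq \dim J^{W(E_8)}_{4,E_8,6} \leq 2$, since both $\orb(2w_7)$ and $\orb(w_3 + w_8)$ in $R_{24}$ are minimal and distinct modulo $6E_8$, and an extra argument is required to exclude dimension $2$. I would argue by pulling back to the $A_1$-case: assuming a nonzero $F_{4,6} \in K_6$ with leading Fourier term $q^2(\sum_{{24}'} - \sum_{{24}{''}}) + O(q^3)$, I would fix any norm-$2$ vector $v_2 \in E_8$ and note that $F_{4,6}(\tau, z v_2)$ is a classical holomorphic Jacobi form of weight $4$ and index $6$ whose $q$-expansion begins at $q^2$. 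Dividing by $\Delta(\tau)^2$ then produces a nonzero classical weak Jacobi form of weight $-20$ and index $6$, contradicting $J^{\w}_{-20, 6} = \{0\}$, which follows immediately from the Eichler-Zagier structure theorem since there are no $\SL_2(\ZZ)$ modular forms of sufficiently negative weight to contribute. This rules out dimension $2$ and gives $J^{W(E_8)}_{4,E_8,6} = \CC X_6$.
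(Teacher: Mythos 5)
Your proposal is correct and follows essentially the same route as the paper: singular weight forces the support onto $(\ell,\ell)=2nt$, vanishing at $\mathfrak{z}=0$ after subtracting a multiple of $X_t$ forces the leading block to involve two distinct Weyl orbits that are minimal in their classes mod $tE_8$, and the orbit/minimality inspection via Lemmas \ref{lemorbit} and \ref{lem2.4} settles $t\le 5$. Even your treatment of the delicate case $t=6$ — restricting $F_{4,6}$ to $zv_2$, dividing by $\Delta^2$, and invoking $J^{\w}_{-20,6}=\{0\}$ — is exactly the paper's argument.
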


\subsection{The case of index 2}\label{Subsec:5.2}
In this subsection we discuss the structure of the space of $W(E_8)$-invariant Jacobi forms of index $2$. Firstly, Theorem \ref{MTH} shows that $J^{W(E_8)}_{*,E_8,2}$ is a free $M_*$-module of rank $3$. It is obvious that $A_2$ and $B_2$ must be generators of weight $4$ and weight $6$ respectively. As $A_1^2$ and $E_4A_2$ are linearly independent, $A_1^2$ is a generator of weight $8$. Hence $J^{W(E_8)}_{*,E_8,2}$ is a free $M_*$-module generated by $A_2, B_2$ and $A_1^2$. This fact can also be proved by Corollary \ref{coro:index2}.

 If $\phi\in J^{\w ,W(E_8)}_{k,E_8,2}$, then $\Delta\phi \in J^{W(E_8)}_{k+12,E_8,2}$ by Lemma \ref{Lemtest}.  From $J^{W(E_8)}_{4,E_8,2}=\CC A_2$ and $J^{W(E_8)}_{6,E_8,2}=\CC B_2$, we obtain $k+12\geq 8$. Thus, $\dim J^{\w ,W(E_8)}_{k,E_8,2}=0$ for $k\leq -6$. 
We next construct many basic Jacobi forms of index 2.
\begin{align}
\varphi_{-4,2}&=\frac{\vartheta_{E_8}^2- \frac{1}{9} E_4 \left(\vartheta_{E_8} \lvert T_{-} (2)\right)}{\Delta} = 2\sum_{2} - \sum_{4}-240 + O(q)\in  J^{\w ,W(E_8)}_{-4, E_8,2} \\
\varphi_{-2,2}&= 3H_{-4}(\varphi_{-4,2})=\sum_{2} + \sum_{4}-480 + O(q)\in  J^{\w ,W(E_8)}_{-2, E_8,2} \\
\varphi_{0,2}&= \frac{1}{2}E_4 \varphi_{-4,2}- H_{-2}(\varphi_{-2,2})=\sum_{2} +120 + O(q)\in  J^{\w ,W(E_8)}_{0, E_8,2}
\end{align}

\begin{remark}There is another construction of $\varphi_{0,2}$
$$\varphi_{0,2}= * \sum_{\sigma \in W(E_8)} f(\tau, \sigma(\mathfrak{z})),$$
where $*$ is a constant and
$$ f(\tau, \mathfrak{z})= -\frac{[\vartheta(\tau,z_1+z_2)\vartheta(\tau,z_1-z_2) \cdots \vartheta(\tau,z_7+z_8)\vartheta(\tau,z_7-z_8)]\lvert T_{-}(2)}{\vartheta(\tau,z_1+z_2)\vartheta(\tau,z_1-z_2)\cdots \vartheta(\tau,z_7+z_8)\vartheta(\tau,z_7-z_8)}.$$
\end{remark}

It is easy to check the following constructions.
\begin{equation}
\begin{split}
A_2=&\frac{1}{9}\vartheta_{E_8}\lvert T_{-} (2)=\frac{8}{9}\Phi_{\Gamma_0(2),1,0}
=\frac{1}{1080} \left(3E_4 \varphi_{0,2}-E_4^2 \varphi_{-4,2}-E_6 \varphi_{-2,2}\right)\\
=&  1 + q\cdot  \sum_{4} +O(q^2).
\end{split}
\end{equation}
\begin{equation}
\begin{split}
B_2 =& \frac{16}{15}\Phi_{\Gamma_0(2),2E_2(2\tau)-E_2(\tau),0}
= \frac{1}{1080} \left(3E_6 \varphi_{0,2}-E_{4}E_{6} \varphi_{-4,2}-E_4^2 \varphi_{-2,2}\right)\\
=& 1 + q\left[-\frac{8}{5} \sum_{2}-\frac{3}{5} \sum_{4}+24 \right] \\
&+q^2\left[\sum_{8{''}}-\frac{24}{5}\sum_{8'}-\frac{224}{5}\sum_6-\frac{72}{5}\sum_4-\frac{32}{5}\sum_2+24 \right]+O(q^3)
\end{split}
\end{equation}

\begin{align}
 U_{12,2}&= \Delta \varphi_{0,2}= q\left(\sum_{2} +120\right) +O(q^2) \in J^{\cusp ,W(E_8)}_{12, E_8,2} \\
V_{14,2}&= \frac{1}{3}\Delta \left(E_6 \varphi_{-4,2}+E_4\varphi_{-2,2}\right) = q \left[\sum_{2} -240\right] +O(q^2) \in J^{\cusp ,W(E_8)}_{14, E_8,2}\\
 W_{16,2}&= \frac{1}{3}\Delta \left(E_4^2 \varphi_{-4,2}+E_6\varphi_{-2,2}\right) = q \left[\sum_{2} -240\right] +O(q^2) \in J^{\cusp ,W(E_8)}_{16, E_8,2}
\end{align}

It is easily seen that $\varphi_{-4,2}$ and $\varphi_{-2,2}$ must be generators of $J^{\w ,W(E_8)}_{*,E_8,2}$ over $M_*$. Since $[\varphi]_{q^0}(\tau,0)=0$ if $\varphi$ is a weak Jacobi form of negative weight, we claim that $\varphi_{0,2}$ is also a generator of $J^{\w ,W(E_8)}_{*,E_8,2}$ over $M_*$ due to $[\varphi_{0,2}]_{q^0}(\tau,0)=360$. We then arrive at the following structure theorem.

\begin{theorem}\label{Th index 2}
The spaces $J^{\w ,W(E_8)}_{*, E_8,2}$, $J^{W(E_8)}_{*, E_8,2}$ and $J^{\cusp ,W(E_8)}_{*, E_8,2}$ are all free $M_*$-modules generated by three Jacobi forms. More exactly, we have 
\begin{align*}
J^{\w ,W(E_8)}_{*, E_8,2}&=M_* \langle \varphi_{-4,2},\varphi_{-2,2},\varphi_{0,2} \rangle,\\
J^{W(E_8)}_{*, E_8,2}&=M_* \langle A_2,B_2,\vartheta_{E_8}^2 \rangle,\\
J^{\cusp ,W(E_8)}_{*, E_8,2}&=M_* \langle U_{12,2},V_{14,2},W_{16,2} \rangle.
\end{align*}
\end{theorem}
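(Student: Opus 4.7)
The plan hinges on the fact, from Theorem \ref{MTH} and Lemma \ref{lem: module}, that each of the three modules is free of rank $r(2)=3$ over $M_*$. Hence for each identification I need only (i) show that the three listed forms are $M_*$-linearly independent, and (ii) confirm that their $M_*$-span exhausts the ambient module.

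For $J^{\w,W(E_8)}_{*,E_8,2}$, the $q^0$-terms of $\varphi_{-4,2},\varphi_{-2,2},\varphi_{0,2}$ are $\CC$-linearly independent in the three-dimensional space $\CC\langle 1,\sum_{2},\sum_{4}\rangle$ provided by Lemma \ref{lem1} (the coordinate matrix with rows $(-240,2,-1),(-480,1,1),(120,1,0)$ has determinant $1080\neq 0$). Given any homogeneous $M_*$-relation $\sum f_i\varphi_i=0$, evaluating at $\tau\to i\infty$ forces each $f_i(i\infty)=0$, so $f_i\in\Delta M_*$; dividing by $\Delta$ yields a relation in weight twelve lower, and the descent terminates because $J^{\w,W(E_8)}_{k,E_8,2}=0$ for $k\leq -6$, as already established in this subsection from $\Delta\phi\in J^{W(E_8)}_{k+12,E_8,2}$ and Lemma \ref{Lem:singular}. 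This same vanishing forces the weights of any minimal generating set into $\{-4,-2,0,\ldots\}$: weight $-4$ must appear since $\varphi_{-4,2}\neq 0$ sits at the minimum weight; weight $-2$ must appear since $M_2=0$ forbids promoting $\varphi_{-4,2}$; and weight $0$ must appear because $\varphi_{0,2}(\tau,0)$ has leading $q^0$-value $360$, whereas any $M_*$-combination of $\varphi_{-4,2},\varphi_{-2,2}$ of weight $0$ is, once again by $M_2=0$, of the form $f_1\varphi_{-4,2}$ with $f_1\in M_4$, and hence vanishes at $\mathfrak{z}=0$.

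The holomorphic case follows the same template: $A_2$ is forced as a weight-$4$ generator by Lemma \ref{Lem:singular}, $B_2$ as a weight-$6$ generator since $M_2\cdot A_2=0$, and $\vartheta_{E_8}^2\in J^{W(E_8)}_{8,E_8,2}$ is $M_*$-independent of $E_4A_2$ because at the $q^1$-level $\vartheta_{E_8}^2$ yields $2\sum_{2}$ while $E_4A_2$ yields $240+\sum_{4}$. For the cusp case one first verifies $J^{\cusp,W(E_8)}_{k,E_8,2}=0$ for $k\leq 10$: any cusp form $\phi$ gives $\phi/\Delta\in J^{\w,W(E_8)}_{k-12,E_8,2}$ whose $q^0$-term must be free of the $\sum_{4}$-component by Lemma \ref{Lemtest}, and this rules out the candidates proportional to $\varphi_{-4,2}$ or $\varphi_{-2,2}$. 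The forms $U_{12,2},V_{14,2},W_{16,2}$ then realize the minimum weights $12,14,16$, and their $M_*$-linear independence is a single determinant computation: expressing them over $\Delta\cdot\langle\varphi_{-4,2},\varphi_{-2,2},\varphi_{0,2}\rangle$, the transition matrix has determinant proportional to $E_4^3-E_6^2=1728\Delta\neq 0$.

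The main obstacle is the generation step in the weak case: $M_*$-linear independence combined with the rank-three bound does not, on its own, force the submodule to equal the full ambient module. The correct dimension identity $\dim J^{\w,W(E_8)}_{k,E_8,2}=\dim M_{k+4}+\dim M_{k+2}+\dim M_k$ for every $k$ must be pinned down separately, and this is where the machinery developed earlier must be deployed in concert: Lemma \ref{lem1} bounds the $q^0$-terms inside $\CC\langle 1,\sum_{2},\sum_{4}\rangle$; Lemma \ref{lf} and the vanishing of $H^j_k(\phi)(\tau,0)$ for $k+2j\leq 0$ impose the additional linear constraints in the borderline weights $-4,-2,0$; and the inductive observation $\phi-\sum f_i\varphi_i\in\Delta\cdot J^{\w,W(E_8)}_{k-12,E_8,2}$ propagates the identity to all higher weights.
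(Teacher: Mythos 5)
Your proposal is correct and follows essentially the same route as the paper: combine the rank-three freeness from Theorem \ref{MTH} and Lemma \ref{lem: module} with the vanishing in low weight, pin down the generator weights via $q^0$-term evaluations at $\mathfrak{z}=0$ (resp. the singular-weight and $q^1$-term arguments), and check linear independence explicitly. Your bookkeeping for the cusp module (dividing by $\Delta$ and applying the criterion of Lemma \ref{Lemtest} to the resulting weak forms, plus the $E_4^3-E_6^2$ determinant) is a mild reorganization of the paper's complement-and-dimension count, and the ``main obstacle'' you flag in the last paragraph is in fact already disposed of by your weight-pinning argument, since freeness plus the determined generator weights forces the dimension identity.
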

\begin{proof}
It remains to prove the third claim. The third claim can be covered by Corollary \ref{coro:index2}. But, we here use another way to prove it.  For arbitrary $f\in J^{W(E_8)}_{2k, E_8,2}$ with $k\geq 4$, there exist two complex numbers $c_1,c_2$ such that 
$$f-c_1E_{2k-4}A_2-c_2E_{2k-8}\Delta\varphi_{-4,2} \in J^{\cusp ,W(E_8)}_{2k, E_8,2},$$ 
we replace $E_{2k-8}\Delta\varphi_{-4,2}$ with $\Delta\varphi_{-2,2}$ when $k=5$. From this, we have
$$ \dim J^{\cusp ,W(E_8)}_{2k, E_8,2} = \dim J^{W(E_8)}_{2k, E_8,2} -2, \; k\geq 4.$$
We then assert that $\dim J^{\cusp ,W(E_8)}_{2k, E_8,2}=0$ for $k\leq 5$, and $\dim J^{\cusp ,W(E_8)}_{2k, E_8,2}=1$ for $2k=12,14$, and $\dim J^{\cusp ,W(E_8)}_{16, E_8,2}=2$. In view of the fact that $W_{16,2}$ is independent of $E_4U_{12,2}$, we complete the proof.
\end{proof}

As an application of our results, we prove that Wirthm\"{u}ller's theorem does not hold for $E_8$.

\begin{theorem}\label{polynomial}
The bigraded ring $J^{\w ,W(E_8)}_{*, E_8,*}$ over $M_*$ is not a polynomial algebra.
\end{theorem}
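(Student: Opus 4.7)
The plan is to argue by contradiction using the index-$2$ structure provided by Theorem \ref{Th index 2}. Suppose $J^{\w,W(E_8)}_{*,E_8,*}$ is a polynomial algebra over $M_*$. By Theorem \ref{MTH} it must then be freely generated by nine algebraically independent elements $\phi_0,\dots,\phi_8$ of weights $k_j$ and indices $m_j$. Comparing the Poincar\'e series in the index variable with \eqref{eq1} and counting orders of poles of $\prod_j(1-x^{m_j})^{-1}$ at primitive roots of unity (equivalently, M\"obius inversion applied to the multiplicities $\#\{j:n\mid m_j\}$), the multiset $\{m_0,\dots,m_8\}$ is forced to equal $\{1,2,2,3,3,4,4,5,6\}$. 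Relabel so that $m_0=1$ and $m_1=m_2=2$. Since $J^{\w,W(E_8)}_{*,E_8,1}=M_*\vartheta_{E_8}$ is free of rank one with generator of weight $4$, the index-$1$ generator $\phi_0$ must be a nonzero scalar multiple of $\vartheta_{E_8}$, so $k_0=4$.

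Next I would focus on index $2$. In the assumed polynomial structure the monomials in the $\phi_j$ of total index $2$ are exactly $\phi_0^2,\phi_1,\phi_2$, and they constitute a free $M_*$-basis of $J^{\w,W(E_8)}_{*,E_8,2}$ of weights $8,k_1,k_2$, respectively. Now $M_*=\CC[E_4,E_6]$ is positively graded with unique maximal homogeneous ideal $\mathfrak{m}=(E_4,E_6)$, so the graded Nakayama lemma applies to the finitely generated free graded $M_*$-module $J^{\w,W(E_8)}_{*,E_8,2}$ and identifies the weight multiset of any minimal system of $M_*$-generators with the Hilbert series of the $\CC$-vector space $J^{\w,W(E_8)}_{*,E_8,2}/\mathfrak{m}\,J^{\w,W(E_8)}_{*,E_8,2}$.

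By Theorem \ref{Th index 2} the set $\{\varphi_{-4,2},\varphi_{-2,2},\varphi_{0,2}\}$ is a free $M_*$-basis of $J^{\w,W(E_8)}_{*,E_8,2}$ of weights $-4,-2,0$. Hence the Nakayama invariant equals $z^{-4}+z^{-2}+z^0$, which would also have to equal $z^{8}+z^{k_1}+z^{k_2}$. Equating these multisets forces $\{8,k_1,k_2\}=\{-4,-2,0\}$, impossible since $8$ is not among $-4,-2,0$. This contradiction would prove the theorem.

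The only delicate step is the graded Nakayama argument identifying the weights of a minimal generating set as a module invariant; everything else reduces to Theorem \ref{MTH}, to the well-known rank-one description of $J^{\w,W(E_8)}_{*,E_8,1}$, and to Theorem \ref{Th index 2}. I therefore expect the full write-up to be very short, with all the real work already done in the preceding sections.
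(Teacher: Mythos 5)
Your argument is correct. The paper's own proof of Theorem \ref{polynomial} also argues by contradiction from the index-$1$ and index-$2$ structure, but the mechanism is different: it asserts that $\vartheta_{E_8}$, $\varphi_{-4,2}$, $\varphi_{-2,2}$, $\varphi_{0,2}$ would all have to belong to the set $S$ of polynomial generators and then exhibits the explicit algebraic relation
\[
\vartheta_{E_8}^2=\tfrac{1}{1080}E_4\bigl(3E_4\varphi_{0,2}-E_4^2\varphi_{-4,2}-E_6\varphi_{-2,2}\bigr)+\Delta\varphi_{-4,2},
\]
contradicting algebraic independence. You never write down a relation: you pin down the index multiset $\{1,2,2,3,3,4,4,5,6\}$ from the Poincar\'e series of Theorem \ref{MTH}, observe that a polynomial structure would force the index-$2$ module to have a free homogeneous basis of weights $\{8,k_1,k_2\}$ (the $8$ coming from $\vartheta_{E_8}^2$), and rule this out against the basis weights $\{-4,-2,0\}$ of Theorem \ref{Th index 2} using the invariance of the degree multiset of a homogeneous basis. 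Both proofs exploit the same tension --- $\vartheta_{E_8}^2$ has weight $8$ while every index-$2$ generator has nonpositive weight --- but yours trades the explicit identity for abstract degree bookkeeping; this is slightly longer to set up but arguably cleaner, since the paper's assertion that the three $\varphi_{k,2}$ themselves must lie in $S$ is exactly the kind of statement your Nakayama step makes precise. The one step you flag as delicate (invariance of the weight multiset of a minimal homogeneous generating set over the graded local ring $M_*=\CC[E_4,E_6]$) is standard and applies here; note only that the weight grading on $J^{\w ,W(E_8)}_{*,E_8,2}$ is bounded below, so the equivalent Hilbert-series comparison is also legitimate.
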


\begin{proof}
Suppose, contrary to our claim, that $J^{\w ,W(E_8)}_{*, E_8,*}$ is a polynomial algebra over $M_*$.  Then there exists a finite set $S$ such that $J^{\w ,W(E_8)}_{*, E_8,*}=M_*[S]$ and the elements of $S$ are algebraically independent over $M_*$. This contradicts the fact that $\vartheta_{E_8}$, $\varphi_{-4,2}$, $\varphi_{-2,2}$, $\varphi_{0,2} \in S$ and the following algebraic relation
$$ \vartheta_{E_8}^2 = \frac{1}{1080}E_4 \left(3E_4 \varphi_{0,2}-E_4^2 \varphi_{-4,2}-E_6 \varphi_{-2,2}\right)+\Delta \varphi_{-4,2}.$$
\end{proof}

\subsection{The case of index 3}\label{Subsec:5.3}
In this subsection we continue to discuss the structure of the module of $W(E_8)$-invariant Jacobi forms of index $3$. We first claim that the possible minimum weight of $W(E_8)$-invariant weak Jacobi forms of index $3$ is $-8$. If there exists a $W(E_8)$-invariant weak Jacobi form $\phi$ of weight $2k<-8$ and index $3$ whose $q^0$-term is not zero, then we can construct a weak Jacobi form of weight $-10$ and index $3$ whose $q^0$-term is not zero. In fact, this function can be constructed as $E_{-10-2k}\phi$ if $2k\leq -14$, or $H_{-12}(\phi)$ if $2k=-12$. We now assume that there exists a $W(E_8)$-invariant weak Jacobi form $\phi$ of weight $-10$ and index $3$ whose $q^0$-term is represented as
$$ [\phi]_{q^0}=240c_0 +c_1 \sum_{2} + c_2 \sum_{4}+c_3\sum_{6}+c_4\sum_{8'}.$$
Then $c_4\neq 0$, otherwise $\Delta\phi$ will be a $W(E_8)$-invariant holomorphic Jacobi form of weight $2$, which is impossible. By means of the differential operators, we construct $\phi_{-8}=H_{-10}(\phi)$, $\phi_{-6}=H_{-8}(\phi_{-8})$, $\phi_{-4}=H_{-6}(\phi_{-6})$ and $H_{-4}(\phi_{-4})$. They are respectively weak Jacobi forms of weight $-8$, $-6$, $-4$, $-2$ with $q^0$-term of the form (order: $240c_0, c_1\sum_2, c_2\sum_4, c_3\sum_6, c_4\sum_{8'}$)
\begin{align*}
&\text{weight} \quad -10: & (a_{1,j})_{j=1}^9=(1,1,1,1,1)\\
&\text{weight} \quad -10+2(i-1): &a_{i,j}=\left(\frac{18-2i}{12}-\frac{j-1}{3}\right) a_{i-1,j}
\end{align*}
where $2\leq i \leq 5$, $1\leq j \leq 5$. For these Jacobi forms, if we take $\mathfrak{z}=0$ then their $q^0$-terms will be zero.  We thus get a system of $5$ linear equations with $5$ unknowns
$$ Ax=0, \quad A=(a_{i,j})_{5\times 5}, \quad x=(c_0,c_1,c_2,c_3,c_4)^t.$$

By direct calculations, this system has only trivial solution, which contradicts our assumption. Hence the possible minimum weight is $-8$. Indeed, there exists the unique $W(E_8)$-invariant weak Jacobi form of weight $-8$ and index $3$ up to a constant.  Suppose that $\phi$ is a non-zero weak Jacobi form of weight $-8$ with $q^0$-term of the form
$$240c_0 +c_1 \sum_{2} + c_2 \sum_{4}+c_3\sum_{6} +c_4\sum_{8'}.$$ 
Similarly, we can construct weak Jacobi forms of weight $-6$, $-4$, $-2$ with $q^0$-term of the form
\begin{align*}
&\text{weight} \quad -8: & (b_{1,j})_{j=1}^9=(1,1,1,1,1)\\
&\text{weight} \quad -8+2(i-1): &b_{i,j}=\left(\frac{16-2i}{12}-\frac{j-1}{3}\right) b_{i-1,j}
\end{align*}
where $2\leq i \leq 4$, $1\leq j \leq 5$. Then we can build a system of 4 linear equations with 5 unknowns
\begin{equation}\label{S1}
Bx=0, \quad B=(b_{i,j})_{4\times 5}, \quad x=(c_0,c_1,c_2,c_3,c_4)^t.
\end{equation}

We find that $(c_0,c_1,c_2,c_3,c_4)=(1,-4,6,-4,1)$ is the unique nontrivial solution of the above system. Therefore, the weak Jacobi form of weight $-8$ and index $3$ is unique if it exists.  Next, we construct many weak Jacobi forms of index $3$.

\begin{align*}
B_{-2,3}=&-5\frac{\vartheta_{E_8}B_2-\frac{1}{28}E_6(\vartheta_{E_8}\lvert T_{-}(3))}{\Delta}
=3\sum_2+3\sum_4+5\sum_6-11\times 240+O(q) \in J^{\w ,W(E_8)}_{-2, E_8,3}\\
\varphi_{-4,3}=&\frac{\vartheta_{E_8}A_2-\frac{1}{28}E_4[\vartheta_{E_8}\lvert T_{-}(3)]}{\Delta}
=\sum_2+\sum_4-\sum_6-240+O(q)\in J^{\w ,W(E_8)}_{-4, E_8,3}\\
A_{0,3}=&\vartheta_{E_8}\varphi_{-4,2}=2\sum_2-\sum_4-240+O(q) \in J^{\w ,W(E_8)}_{0, E_8,3} \\
\varphi_{-2,3}=&3H_{-4}(\varphi_{-4,3})=\sum_2+\sum_6-480+O(q) \in J^{\w ,W(E_8)}_{-2, E_8,3}\\
\varphi_{0,3}=&\frac{3}{8}\left(A_{0,3}+E_4\varphi_{-4,3}-2H_{-2}(\varphi_{-2,3})  \right)=\sum_2+O(q) \in J^{\w ,W(E_8)}_{0, E_8,3}
\end{align*}

\begin{remark}
There is another construction of $\varphi_{0,3}$
$$\varphi_{0,3}= * \sum_{\sigma \in W(E_8)} g(\tau, \sigma(\mathfrak{z}))$$
where $*$ is a constant and the function $g$ is defined as 
$$ g(\tau, \mathfrak{z})= \prod_{i=1}^8 \frac{\vartheta(\tau,2z_i)}{\vartheta(\tau,z_i)} +\prod_{i=1}^8 \frac{\vartheta(\tau,2z_i)}{\vartheta_{00}(\tau,z_i)}+\prod_{i=1}^8 \frac{\vartheta(\tau,2z_i)}{\vartheta_{01}(\tau,z_i)}+\prod_{i=1}^8 \frac{\vartheta(\tau,2z_i)}{\vartheta_{10}(\tau,z_i)}. $$
\end{remark}

We next construct the $W(E_8)$-invariant weak Jacobi form of weight $-8$ and index $3$. 
Firstly, we can check 
$$ 
E_4^2\varphi_{-4,3}+6E_6\varphi_{-2,3}-2E_4A_{0,3}-E_6B_{-2,3}=O(q)\in J^{\w ,W(E_8)}_{4, E_8,3}.
$$
If $ E_4^2\varphi_{-4,3}+6E_6\varphi_{-2,3}-2E_4A_{0,3}-E_6B_{-2,3}=0$, then we have
\begin{align*}
f_{-6,3}&=\frac{E_4\varphi_{-4,3}-2A_{0,3}}{E_6}=-3\sum_2+3\sum_4-\sum_6+240+O(q)\in J^{\w ,W(E_8)}_{-6, E_8,3},\\
H_{-6}(f_{-6,3})&=-\frac{3}{2}\sum_2+\frac{1}{2}\sum_4+\frac{1}{6}\sum_6+200+O(q)\in J^{\w ,W(E_8)}_{-4, E_8,3}.
\end{align*}
It is easy to see that $f_{-6,3}$,  $H_{-6}(f_{-6,3})$, $\varphi_{-4,3}$ are free over $M_*$ because $E_4\phi_{-8,3}$, $H_{-6}(f_{-6,3})$ and $\varphi_{-4,3}$ are independent. However
$$
E_6f_{-6,3}-3E_4 H_{-6}(f_{-6,3})-\frac{3}{2}E_4\varphi_{-4,3}=O(q).
$$
Hence we can get a non-zero weak Jacobi form of index 3 and weight $-12$, which is impossible. It follows that  $ E_4^2\varphi_{-4,3}+6E_6\varphi_{-2,3}-2E_4A_{0,3}-E_6B_{-2,3}\neq 0$, and we can construct 
\begin{equation}
\begin{split}
\varphi_{-8,3}=&*\frac{E_4^2\varphi_{-4,3}+6E_6\varphi_{-2,3}-2E_4A_{0,3}-E_6B_{-2,3}}{\Delta}\\
=&\sum_{8'}-4\sum_6+6\sum_4-4\sum_2+240+O(q) \in J^{\w ,W(E_8)}_{-8, E_8,3}.
\end{split}
\end{equation}
\begin{equation}
\varphi_{-6,3}=-3H_{-8}(\varphi_{-8,3})=\sum_{8'}-6\sum_4+8\sum_2-720+O(q) \in J^{\w ,W(E_8)}_{-6, E_8,3}.
\end{equation}

In fact, we get the coefficients of the $q^0$-term of $\varphi_{-8,3}$ from the solution of the system of linear equations (\ref{S1}). We now arrive at our main theorem in this subsection.

\begin{theorem}\label{Th index 3}
The space $J^{\w ,W(E_8)}_{*, E_8,3}$ is a free $M_*$-module generated by five weak Jacobi forms. More precisely, we have 
$$ J^{\w ,W(E_8)}_{*, E_8,3}=M_* \langle \varphi_{-8,3},\varphi_{-6,3},\varphi_{-4,3},\varphi_{-2,3},\varphi_{0,3} \rangle.$$
\end{theorem}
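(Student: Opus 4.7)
The plan is to verify that the five weak Jacobi forms $\varphi_{-2k,3}$, $k=0,1,2,3,4$, constructed above form a free $M_*$-basis of $J^{\w,W(E_8)}_{*,E_8,3}$. By Theorem \ref{MTH}, this module is free of rank $r(3)=5$ over $M_*$, so it suffices to verify that our five candidates are $M_*$-linearly independent and jointly span the module.

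For $M_*$-linear independence I would examine the $q^0$-terms. By Lemma \ref{lem1} these live in the $5$-dimensional $\CC$-space spanned by $\{1,\sum_2,\sum_4,\sum_6,\sum_{8'}\}$, and the explicit formulas listed above assemble the five $[\varphi_{-2k,3}]_{q^0}$ into a $5\times 5$ matrix whose full rank is a short numerical check (the bottom of the matrix is upper triangular once $\varphi_{0,3}$ and $\varphi_{-2,3}$ are used to clear the $\sum_2$ and $\sum_6$ columns). Assuming a relation $\sum_{k_0} f_{k_0}\,\varphi_{-2k_0,3}=0$ with $f_{k_0}\in M_*$ of appropriate weights, reading $q^0$-terms produces a $\CC$-linear relation among the $[\varphi_{-2k_0,3}]_{q^0}$ with coefficients equal to the constant terms $f_{k_0}(i\infty)$, so every $f_{k_0}$ must be a cusp form and hence divisible by $\Delta$. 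Dividing through by $\Delta$ yields a relation of strictly smaller total weight, and downward iteration terminates at a weight where every $M_{k+2k_0}$ vanishes, giving the contradiction.

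For spanning I would argue by descent on the weight. Given $\varphi\in J^{\w,W(E_8)}_{k,E_8,3}$ with $k$ large enough, the $\CC$-independence of the $[\varphi_{-2k_0,3}]_{q^0}$ lets one choose constants $c_{k_0}$ so that $\varphi-\sum_{k_0}c_{k_0}E_{k+2k_0}\varphi_{-2k_0,3}$ has trivial $q^0$-term; this remainder is then divisible by $\Delta$ with quotient in $J^{\w,W(E_8)}_{k-12,E_8,3}$, and the inductive hypothesis applies. The base cases $k\in\{-8,-6,-4,-2,0\}$, where the Eisenstein trick breaks down because some $k+2k_0$ is nonpositive or equal to two, are handled by the differential-operator/linear-system method used before the theorem statement: the weight $-8$ computation pinned that space to one dimension via the essentially unique solution $(1,-4,6,-4,1)$ of the system $Bx=0$, and exactly the same strategy---applying $H_k$ repeatedly and specializing to $\mathfrak{z}=0$ to exploit the vanishing of $W(E_8)$-invariant holomorphic modular forms of negative weight---yields dimension bounds at each of the remaining four base weights that match the $M_*$-span of our five candidates. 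This low-weight analysis is the main obstacle; once it is in place, the descent by $\Delta$ runs unobstructed and the rank count from Theorem \ref{MTH} closes the proof.
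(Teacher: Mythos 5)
Your proposal is correct and draws on the same toolbox as the paper (the rank count $r(3)=5$ from Theorem \ref{MTH}, the five-dimensional space of admissible $q^0$-terms from Lemma \ref{lem1}, the differential-operator linear systems combined with the $\mathfrak{z}=0$ evaluation and Lemma \ref{lf}, and division by $\Delta$), but the endgame is organized differently. The paper never verifies independence and spanning separately: beyond the facts established before the theorem ($J^{\w,W(E_8)}_{k,E_8,3}=\{0\}$ for $k\le -10$ and $\dim J^{\w,W(E_8)}_{-8,E_8,3}\le 1$), it only proves $\dim J^{\w,W(E_8)}_{-6,E_8,3}\le 1$ by one linear system, then checks via $q^0$-terms that each $\varphi_{-2k,3}$ lies outside the $M_*$-submodule generated by the lower-weight candidates (for $\varphi_{0,3}$ this is the one-line observation $[\varphi_{0,3}]_{q^0}(\tau,0)\ne 0$, since any weight-$0$ element of $M_*^{+}\cdot J$ has $q^0$-term vanishing at $\mathfrak{z}=0$), and concludes by counting: five generators in five distinct weights exhaust a free module of rank five. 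That shortcut is exactly what your route pays for with extra base cases: your descent additionally needs the dimension bounds at weights $-4$, $-2$ and $0$, each requiring its own linear-system computation (they do come out to $2$, $3$ and $4$, matching the $M_*$-span). One omission to repair: by your own criterion (``some $k+2k_0$ is \dots equal to two'') the weight $k=2$ is also a base case, since $E_2\varphi_{0,3}$ is unavailable; it is handled by noting that $M_2=\{0\}$ forces $[\phi]_{q^0}(\tau,0)=0$, which confines the $q^0$-term to the hyperplane spanned by the $q^0$-terms of the four negative-weight generators, after which subtraction and $\Delta$-division land in $J^{\w,W(E_8)}_{-10,E_8,3}=\{0\}$. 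With that case added, your argument closes, at the cost of three or four computations the paper's generator-counting argument avoids.
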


\begin{proof}
We first claim that there is no weak Jacobi form of weight $-6$ and index $3$ independent of $\varphi_{-6,3}$.  Conversely, suppose that there exists a weak Jacobi form of weight $-6$ which is linearly independent of $\varphi_{-6,3}$, noted by $f$. Without loss of generality, we can assume  
$$
[f]_{q^0}=240c_0 +c_1 \sum_{2} + c_2 \sum_{4}+c_3\sum_{6} \neq 0.
$$ 
Once again, we can construct weak Jacobi forms of weight $-4$, $-2$ and $0$ by the differential operators, respectively. They have $q^0$-terms of the form (order: $240c_0, c_1\sum_2, c_2\sum_4, c_3\sum_6$)
\begin{align*}
&\text{weight} \quad -6: &(c_{1,j})_{j=1}^4=(1,1,1,1)\\
&\text{weight} \quad -6+2(i-1): &c_{i,j}=\frac{9-i-2j}{6}c_{i-1,j}
\end{align*}
where $2\leq i\leq 4$, $1\leq j \leq 4$. For each Jacobi form of negative weight, if we take $\mathfrak{z}=0$ then its $q^0$-term will be zero. Hence we have
$$ 
\sum_{j=1}^4 c_{i,j}c_{j-1}=0, \quad 1\leq i \leq 3.
$$
For the Jacobi form of weight zero, by Lemma \ref{lf}, we have 
$$ 
\sum_{j=1}^4 (12-6j)c_{4,j}c_{j-1} =0.
$$
We thus get a system of linear equations of $4\times 4$. By direct calculations, we obtain $c_j=0$ for $0\leq j \leq 3$, which contradicts our assumption.

Theorem \ref{MTH} shows that $J^{\w ,W(E_8)}_{*, E_8,3}$ is a free $M_*$-module generated by five weak Jacobi forms. It is obvious that $\varphi_{-8,3}$, $\varphi_{-6,3}$ and $\varphi_{-4,3}$ are generators. Since $\varphi_{-2,3}$ is independent of $E_6 \varphi_{-8,3}$ and $E_4 \varphi_{-6,3}$, the function $\varphi_{-2,3}$ must be a generator. Moreover, $\varphi_{0,3}$ is also a generator on account of $[\varphi_{0,3}]_{q^0}(\tau,0)\neq 0$. We then conclude the eager result.
\end{proof}

In the rest of this subsection, we investigate the spaces of holomorphic Jacobi forms and Jacobi cusp forms of index $3$. 
Let $k\geq 2$. It is easy to see that the five dimensional space $\mathfrak{A}$ generated by $E_{2k+8}\varphi_{-8,3}$, $E_{2k+6}\varphi_{-6,3}$, $E_{2k+4}\varphi_{-4,3}$, $E_{2k+2}\varphi_{-2,3}$, $E_{2k-4}\Delta\varphi_{-8,3}$ (if $k=3$, we replace $E_{2k-4}\Delta\varphi_{-8,3}$ with $\Delta\varphi_{-6,3}$) does not contain non-zero holomorphic Jacobi form of weight $2k$. Moreover, for any $\phi\in J^{\w ,W(E_8)}_{2k,E_8,3}$, there exists a Jacobi form $f\in \mathfrak{A}$ such that $\phi - f$ is a holomorphic Jacobi form. We then assert
\begin{equation*}
\dim J^{W(E_8)}_{2k, E_8,3}= \dim J^{\w ,W(E_8)}_{2k, E_8,3}-5, \quad k\geq 2.
\end{equation*}
It is clear that $\dim J^{W(E_8)}_{2k, E_8,3}= 1$, for $k=2,3$. Thus, we deduce
\begin{align*}
A_3=&\frac{1}{28}E_4(\vartheta_{E_8}\lvert T_{-}(3))=\frac{27}{28}\Phi_{\Gamma_0(3),1,0}=1+q \sum_6 +O(q^2),\\
B_3=& \frac{81}{160}\Phi_{\Gamma_0(3),3E_2(3\tau)-E_2(\tau),0}
=1+q\left[-\frac{7}{20}\sum_6 -\frac{27}{20}\sum_4-\frac{9}{20}\sum_2+12 \right]+O(q^2).
\end{align*}

We further construct
\begin{align}
A_2\vartheta_{E_8}&=1+q\left[\sum_2+\sum_4\right]+O(q^2) \in J^{W(E_8)}_{8, E_8,3}\\
B_2\vartheta_{E_8}&=1+q\left[-\frac{3}{5}\sum_2-\frac{3}{5}\sum_4+24\right]+O(q^2) \in J^{W(E_8)}_{10, E_8,3},\\
\vartheta_{E_8}^3&=1+3q\sum_2+O(q^2) \in J^{W(E_8)}_{12, E_8,3}.
\end{align}

It is easy to check that the following vector spaces have the corresponding basis.
\begin{align*}
&J^{W(E_8)}_{8, E_8,3}=\CC\{E_4A_3,\;A_2\vartheta_{E_8} \}\\
&J^{W(E_8)}_{10, E_8,3}=\CC\{E_6A_3,\;E_4B_3,\;B_2\vartheta_{E_8} \}\\
&J^{W(E_8)}_{12, E_8,3}=\CC\{E_4^2A_3,\;E_6B_3,\;E_4A_2\vartheta_{E_8},\;\vartheta_{E_8}^3 \}
\end{align*}

From the above discussions, we claim that $A_3,B_3,A_2\vartheta_{E_8},B_2\vartheta_{E_8},\vartheta_{E_8}^3$ are free over $M_*$. 
This proves the following theorem.

\begin{theorem}\label{Th index 3h}
The space $J^{W(E_8)}_{*, E_8,3}$ is a free $M_*$-module generated by five holomorphic Jacobi forms. More precisely, we have 
$$ J^{W(E_8)}_{*, E_8,3}=M_* \langle A_3,\;B_3,\;A_2\vartheta_{E_8},\;B_2\vartheta_{E_8},\;\vartheta_{E_8}^3 \rangle.$$
\end{theorem}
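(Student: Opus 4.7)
The strategy is the one implicit in the excerpt: reduce via the structure theorem for weak Jacobi forms to a weight-by-weight comparison of dimensions, then verify linear independence of the five candidates by inspecting $q^0$/$q^1$-expansions in the Weyl-orbit basis. By Lemma \ref{lem: module} and Theorem \ref{MTH}, the module $J^{W(E_8)}_{*,E_8,3}$ is free of rank $r(3)=5$ over $M_*$, so it suffices to produce five $M_*$-linearly independent holomorphic Jacobi forms; generation then follows automatically from freeness and rank.

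The first step is to establish the key dimension formula
\[
\dim J^{W(E_8)}_{2k,E_8,3} = \dim J^{\w,W(E_8)}_{2k,E_8,3} - 5, \qquad k\geq 2.
\]
For this I would produce, for each $k\geq 2$, the explicit five-dimensional subspace $\mathfrak{A}_k \subset J^{\w,W(E_8)}_{2k,E_8,3}$ spanned by $E_{2k+8}\varphi_{-8,3}$, $E_{2k+6}\varphi_{-6,3}$, $E_{2k+4}\varphi_{-4,3}$, $E_{2k+2}\varphi_{-2,3}$, $E_{2k-4}\Delta\varphi_{-8,3}$ (replacing the last by $\Delta\varphi_{-6,3}$ when $k=3$) and show two things: (i) $\mathfrak{A}_k \cap J^{W(E_8)}_{2k,E_8,3} = \{0\}$, and (ii) every weak Jacobi form of index $3$ and weight $2k$ becomes holomorphic after subtraction of a suitable element of $\mathfrak{A}_k$. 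Both (i) and (ii) reduce, via Lemma \ref{Lemtest}(2), to verifying that the $q^0$-terms (and the $\sum_6$-coefficient in $q^1$) of these five weak forms span the full $5$-dimensional space of possible obstructions to holomorphicity at index $3$; this follows from the explicit $q^0$-expansions already recorded for $\varphi_{-8,3},\ldots,\varphi_{0,3}$.

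Granted the dimension formula, I compute the small-weight holomorphic dimensions from the values of $\dim J^{\w,W(E_8)}_{2k,E_8,3}$, which are read off from Theorem \ref{Th index 3}: this gives $\dim J^{W(E_8)}_{2k,E_8,3} = 1, 1, 2, 3, 4$ for $2k = 4, 6, 8, 10, 12$ respectively. At each weight I then verify that the candidate spanning sets $\{E_4 A_3, A_2\vartheta_{E_8}\}$, $\{E_6 A_3, E_4 B_3, B_2\vartheta_{E_8}\}$, $\{E_4^2 A_3, E_6 B_3, E_4 A_2\vartheta_{E_8}, \vartheta_{E_8}^3\}$ are $\CC$-linearly independent, by reading off their $q^1$-terms: the already-tabulated expansions of $A_3$, $B_3$, $A_2\vartheta_{E_8}$, $B_2\vartheta_{E_8}$, $\vartheta_{E_8}^3$ make this a rank check on small matrices with entries being coefficients of $1$, $\sum_2$, $\sum_4$, $\sum_6$.

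Finally, the $M_*$-linear independence of $A_3, B_3, A_2\vartheta_{E_8}, B_2\vartheta_{E_8}, \vartheta_{E_8}^3$ follows by descending weight: a hypothetical $M_*$-relation $\sum_{i=1}^{5} f_i g_i = 0$ with $g_i$ ranging over the five candidates of weights $4,6,8,10,12$ forces, upon inspection of the lowest-weight homogeneous component and use of the weight-4, 6, 8, 10, 12 basis statements above, each $f_i = 0$. Combined with the rank-$5$ freeness, this produces the desired module decomposition. The main obstacle I anticipate is step one — specifically part (ii), controlling the $q^1$-term in addition to the $q^0$-term (the $\sum_6$ contribution enters at $q^1$ by Lemma \ref{Lemtest}(2)) — because the five weak forms must collectively hit all five non-holomorphic degrees of freedom rather than some proper subspace, and this demands a careful simultaneous rank computation rather than separate weight-by-weight checks.
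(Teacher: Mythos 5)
Your proposal is correct and follows essentially the same route as the paper: the dimension formula $\dim J^{W(E_8)}_{2k,E_8,3}=\dim J^{\w,W(E_8)}_{2k,E_8,3}-5$ via the same correction space $\mathfrak{A}$, the resulting dimensions $1,1,2,3,4$ in weights $4$ through $12$, and the explicit basis checks at each of those weights using the tabulated $q^1$-expansions. One caution: five $M_*$-linearly independent elements of a free rank-$5$ module need not generate it, so the real content is your weight-by-weight verification that the candidates span each graded piece up to weight $12$ (which pins the generator weights at $4,6,8,10,12$) — that step, which you do carry out, is what the paper relies on as well.
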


In the end, we determine the structure of Jacobi cusp forms of index 3. We first construct many basic Jacobi cusp forms.
\begin{align*}
U_{10,3}=&-\frac{35}{54}E_6A_3-\frac{50}{27}E_4B_3+\frac{5}{2}B_2\vartheta_{E_8}=
q\left[\sum_4-\frac{2}{3}\sum_2-80\right]+O(q^2)\in J^{\cusp ,W(E_8)}_{10, E_8,3}\\
U_{12,3}=&E_4A_2\vartheta_{E_8}-\vartheta_{E_8}^3
=q\left[\sum_4-2\sum_2+240\right]+O(q^2)\in J^{\cusp ,W(E_8)}_{12, E_8,3}\\
U_{14,3}=&\Delta(E_4\varphi_{-2,3}+E_6\varphi_{-4,3})
=q\left[\sum_4+2\sum_2-720\right]+O(q^2)\in J^{\cusp ,W(E_8)}_{14, E_8,3}\\
V_{12,3}=&\Delta\varphi_{0,3}=q\cdot\sum_2+O(q^2)\in J^{\cusp ,W(E_8)}_{12, E_8,3}\\
U_{16,3}=&\Delta^2\varphi_{-8,3}=O(q^2)\in J^{\cusp ,W(E_8)}_{16, E_8,3}
\end{align*}

For arbitrary $k\geq 4$, we can show that the two dimensional space $\mathfrak{B}$ generated by $E_{2k-4}A_3$ and $E_{2k-8}\Delta\varphi_{-4,3}$ (if $k=5$, we replace $E_{2k-8}\Delta\varphi_{-4,3}$ with $E_4B_3$) does not contain non-zero Jacobi cusp form of weight $2k$. Moreover, for any $\phi\in J^{W(E_8)}_{2k,E_8,3}$, there exists a Jacobi form $g\in \mathfrak{B}$ such that $\phi - g$ is a Jacobi cusp form. We thus deduce
\begin{equation}
\dim J^{\cusp ,W(E_8)}_{2k, E_8,3}= \dim J^{W(E_8)}_{2k, E_8,3}-2, \quad k\geq 4.
\end{equation}
In a similar argument, we prove the next theorem.

\begin{theorem}\label{thcusp3}
The space $J^{\cusp ,W(E_8)}_{*, E_8,3}$ is a free $M_*$-module generated by five Jacobi cusp forms. More exactly, we have 
$$ J^{\cusp ,W(E_8)}_{*, E_8,3}=M_* \langle U_{10,3},\; U_{12,3},\;V_{12,3},\;U_{14,3},\;U_{16,3}\;\rangle.$$
\end{theorem}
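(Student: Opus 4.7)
The plan is to deduce Theorem \ref{thcusp3} from the holomorphic structure result Theorem \ref{Th index 3h} by a codimension argument, closely paralleling what was done for $J^{\cusp,W(E_8)}_{*,E_8,2}$ in the proof of Theorem \ref{Th index 2}. By Lemma \ref{lem: module} the cusp module is $M_*$-free of rank $5$, so the task reduces to exhibiting five $M_*$-free generators in the asserted bidegrees and accounting for all weights by a Hilbert series check.

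The first step is to justify the dimension formula
\[
\dim J^{\cusp,W(E_8)}_{2k,E_8,3}=\dim J^{W(E_8)}_{2k,E_8,3}-2, \quad k\geq 4,
\]
already stated in the excerpt. By Lemma \ref{Lemtest}(2), a holomorphic Jacobi form of index $3$ is a cusp form exactly when its $q^0$-term vanishes and the coefficient $c_3$ of $\sum_6$ in its $q^1$-term vanishes. The two-dimensional subspace $\mathfrak{B}_{2k}=\langle E_{2k-4}A_3,\; E_{2k-8}\Delta\varphi_{-4,3}\rangle$ (with $E_4B_3$ replacing the second generator when $k=5$, to avoid the quasi-modular $E_2$, and $E_0=1$ understood at $k=4$) maps bijectively to $\CC^2$ under the evaluation map $(c_0,c_3)$, as one sees directly from the explicit $q$-expansions of $A_3$, $B_3$ and $\varphi_{-4,3}$ recorded in the excerpt. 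Hence $\mathfrak{B}_{2k}\cap J^{\cusp,W(E_8)}_{2k,E_8,3}=0$ and $J^{W(E_8)}_{2k,E_8,3}=\mathfrak{B}_{2k}\oplus J^{\cusp,W(E_8)}_{2k,E_8,3}$, giving the claimed codimension.

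The second step is to verify that each of the five forms $U_{10,3}, U_{12,3}, V_{12,3}, U_{14,3}, U_{16,3}$ constructed in the excerpt really lies in $J^{\cusp,W(E_8)}_{*,E_8,3}$. For $V_{12,3}=\Delta\varphi_{0,3}$ and $U_{16,3}=\Delta^2\varphi_{-8,3}$ the vanishing of both obstructions is automatic because multiplication by positive powers of $\Delta$ pushes the Fourier support deep enough, and the leading terms of $\varphi_{0,3}$ and $\Delta\varphi_{-8,3}$ contain no $\sum_6$ contribution in the critical position. For $U_{10,3}$, $U_{12,3}$, $U_{14,3}$ the specific linear combinations in the excerpt are tuned to cancel the constant $c_0$ and the $\sum_6$ coefficient simultaneously, a short substitution using the listed Fourier expansions of $A_3$, $B_3$, $A_2\vartheta_{E_8}$, $B_2\vartheta_{E_8}$, $\vartheta_{E_8}^3$, $\varphi_{-2,3}$ and $\varphi_{-4,3}$.

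Finally, generation and freeness reduce to a Hilbert series match. The predicted Hilbert series from five $M_*$-free generators of weights $10,12,12,14,16$ is
\[
\mathcal{H}(q)=\frac{q^{10}+2q^{12}+q^{14}+q^{16}}{(1-q^4)(1-q^6)},
\]
which one checks weight-by-weight against the cusp dimensions obtained from Theorem \ref{Th index 3h} together with the codimension-$2$ formula of step one. Linear independence of the candidates over $M_*$ then follows from leading Fourier behaviour: $U_{10,3}$ is nonzero and generates the one-dimensional weight-$10$ cusp space; at weight $12$ the forms $U_{12,3}$ and $V_{12,3}$ are distinguished by the presence versus absence of $\sum_4$ in the $q^1$-term; and at weights $14$ and $16$ the forms $U_{14,3}$, $U_{16,3}$ fall outside the $M_*$-span of the earlier generators by inspection of their initial Fourier coefficients (in particular $U_{16,3}=\Delta^2\varphi_{-8,3}$ brings in the orbit $\sum_{8'}$ first carried by $\varphi_{-8,3}$). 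The main technical nuisance is the special case $k=5$ where $\mathfrak{B}_{10}$ must be defined using $E_4B_3$ instead of $E_2\Delta\varphi_{-4,3}$; confirming that $E_6A_3$ and $E_4B_3$ remain independent under the two obstruction functionals $(c_0,c_3)$ is a short but non-trivial Fourier calculation.
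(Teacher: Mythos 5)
Your proposal is correct and follows essentially the same route as the paper: the paper establishes the codimension formula $\dim J^{\cusp ,W(E_8)}_{2k,E_8,3}=\dim J^{W(E_8)}_{2k,E_8,3}-2$ for $k\geq 4$ via the same complement space $\mathfrak{B}$ (with the same $k=5$ substitution of $E_4B_3$), and then disposes of the theorem with the phrase ``in a similar argument,'' meaning exactly the weight-by-weight dimension count and leading-coefficient independence check that you spell out. Your write-up simply makes explicit the details the paper leaves implicit.
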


\subsection{The case of index 4}\label{Subsec:5.4}
In this subsection we study the space of $W(E_8)$-invariant Jacobi forms of index $4$. We first assert that the possible minimum weight of $W(E_8)$-invariant weak Jacobi forms of index $4$ is $-16$. If there exists a non-zero weak Jacobi form $\phi$ of index 4 and weight $k<-16$, then its $q^0$-term is not zero and is not of the form
$$c_4'\sum_{8'}+c_4{''}\sum_{8{''}}+240c_0,$$ 
otherwise we can construct a non-zero holomorphic Jacobi form of wight less than $4$ (i.e. $\Delta\phi$).
As in the case of index $3$, by the Eisenstein series and the differential operators, we can construct a weak Jacobi form of weight $-18$ with non-zero $q^0$-term and note it by $f$. For convenience, we write 
$$c_4'\sum_{8'}+c_4{''}\sum_{8{''}}=(c_4'+c_4{''})\sum_8=c_4\sum_8.$$ 

We can assume that $f$ has $q^0$-term of the form
$$ 240c_0 +c_1 \sum_{2} + c_2 \sum_{4}+c_3\sum_{6} +c_4\sum_{8} + c_5 \sum_{10} +c_6\sum_{12} +c_7\sum_{14'}+c_8\sum_{16'},$$
where $c_i$ are not all zero. We then construct weak Jacobi forms of weights $-16$, $-14$, $-12$, $-10$, $-8$, $-6$, $-4$, $-2$. They have $q^0$-terms of the form (order: $240c_0, c_1\sum_2, \cdots , c_8\sum_{16'}$)
\begin{align*}
&\text{weight} \quad -18: & (a_{1,j})_{j=1}^9=(1,1,1,1,1,1,1,1,1)\\
&\text{weight} \quad -18+2(i-1): &a_{i,j}=\frac{29-2i-3j}{12}a_{i-1,j}
\end{align*}
where $2\leq i \leq 9$, $1\leq j\leq 9$.  For these Jacobi forms, if we put $\mathfrak{z}=0$, then their $q^0$-terms will become zero. Hence we can get a system of linear equations: 
$$ Ax=0, \quad A=(a_{i,j})_{9\times 9}, \quad x=(c_0,c_1,\cdots,c_8)^t.$$
By direct calculations, we know that the determinant of the matrix $A$ is not zero, it follows that the $q^0$-term of $f$ is zero, which leads to  a contradiction. Hence the possible minimum weight is $-16$.  One weak Jacobi form of index $4$ and weight $-16$ can be constructed as
\begin{equation}
\begin{split}
\varphi_{-16,4} = &c \sum_{\sigma \in W(E_8)} h(\tau, \sigma(\mathfrak{z})) \\
=&\sum_{16'}-8\sum_{14'}+28\sum_{12}-56\sum_{10}+14\sum_{8{''}}+56\sum_{8'}-56\sum_{6}+28\sum_{4}-8\sum_{2}+240 + O(q),
\end{split}
\end{equation}
where $c$ is a constant and the function $h$ is defined as 
$$ h(\tau, \mathfrak{z})=\frac{1}{\Delta^2}\prod_{i=1}^{4}\vartheta(\tau,z_{2i-1}+z_{2i})^2\vartheta(\tau,z_{2i-1}-z_{2i})^2.$$

Next, we show that $\varphi_{-16,4}$ is the unique weak Jacobi form of index $4$ and weight $-16$ up to a constant. Similarly, suppose that there exists a weak Jacobi form $\phi_{-16,4}$ of weight $-16$ with $q^0$-term of the form
$$ 240c_0 +c_1 \sum_{2} + c_2 \sum_{4}+c_3\sum_{6} +c_4\sum_{8} + c_5 \sum_{10} +c_6\sum_{12} +c_7\sum_{14'}+c_8\sum_{16'}.$$
Then we can construct weak Jacobi forms of weight $-14$, $-12$, $-10$, $-8$, $-6$, $-4$, $-2$, respectively. They have $q^0$-terms of the form (order: $240c_0, c_1\sum_2, \cdots , c_8\sum_{16'}$)
\begin{align*}
&\text{weight} \quad -16: & (b_{1,j})_{j=1}^9=(1,1,1,1,1,1,1,1,1)\\
&\text{weight} \quad -16+2(i-1): &b_{i,j}=\frac{27-2i-3j}{12}b_{i-1,j}
\end{align*}
where $2\leq i \leq 8$, $1\leq j\leq 9$.  For these Jacobi forms, if we take $\mathfrak{z}=0$, then their $q^0$-terms will be zero.  We then get a system of linear equations: 
$$ Bx=0, \quad B=(b_{i,j})_{8\times 9}, \quad x=(c_0,c_1,\cdots,c_8)^t.$$
By direct calculations, it has the unique nontrivial solution
$$x=(1,-8,28,-56,70,-56,28,-8,1).$$ 
We see at once that $[\varphi_{-16,4}-\phi_{-16,4}]_{q^0}=*(\sum_{8'}-\sum_{8{''}})$. 
Thus $\Delta(\varphi_{-16,4}-\phi_{-16,4})$ is a holomorphic Jacobi form of weight $-4$, which yields $\varphi_{-16,4}=\phi_{-16,4}$.

Applying the differential operators to $\varphi_{-16,4}$, we construct the following basic weak Jacobi forms.

\begin{align*}
\varphi_{-14,4} =& -3H_{-16}(\varphi_{-16,4})\\
=& \sum_{16'}-2\sum_{14'}-14\sum_{12}+70\sum_{10}-28\sum_{8{''}}-112\sum_{8'}+
154\sum_{6}-98\sum_{4}+34\sum_{2}-1200 + O(q) 
\\
\varphi_{-12,4} =&-\frac{2}{7}H_{-14}(\varphi_{-14,4})-\frac{1}{7}E_4\varphi_{-16,4}\\
=& \sum_{14'}-4\sum_{12}+3\sum_{10}+2\sum_{8{''}}+8\sum_{8'}
-25\sum_{6}
+24\sum_{4}-11\sum_{2}+480 + O(q) \in J^{\w ,W(E_8)}_{-12, E_8,4}\\
\varphi_{-10,4} =&-\frac{4}{9}H_{-12}\left(\varphi_{-12,4}\right)-\frac{5}{162}
(E_4\varphi_{-14,4}-E_6\varphi_{-16,4})\\
=&\sum_{12}-4\sum_{10}+\sum_{8{''}}+4\sum_{8'}
-5\sum_{4}+4\sum_{2}-240 + O(q) \in J^{\w ,W(E_8)}_{-10, E_8,4}
\\
\varphi_{-8,4} =&-\frac{3}{5}H_{-10}(\varphi_{-10,4})-\frac{1}{15}E_4\varphi_{-12,4}+\frac{1}{90}E_6\varphi_{-14,4}-\frac{1}{90}E_4^2\varphi_{-16,4}\\
=&\sum_{10}-\frac{7}{10}\sum_{8{''}}-\frac{28}{10}\sum_{8'}+4\sum_{6}
-\sum_{4}-\sum_{2}+120 + O(q) \in J^{\w ,W(E_8)}_{-8, E_8,4}
\end{align*}

\begin{align*}
\varphi_{-6,4} =&-\frac{1}{2}E_4\varphi_{-10,4}+\frac{1}{6}
E_6\varphi_{-12,4}-\frac{1}{36}
(E_4^2\varphi_{-14,4}-E_{4}E_6\varphi_{-16,4})
-4H_{-8}(\varphi_{-8,4})\\
=&\sum_{8{''}}+4\sum_{8'}-14\sum_{6}
+12\sum_{4}-2\sum_{2}-240 + O(q) \in J^{\w ,W(E_8)}_{-6, E_8,4}
\\
\varphi_{-4,4} =&-\frac{10}{81}E_4\varphi_{-8,4}+\frac{5}{81}E_6\varphi_{-10,4}+\frac{5}{1458}
(E_{4}E_6\varphi_{-14,4}-E_{4}^3\varphi_{-16,4})\\
&-\frac{5}{243}
E_4^2\varphi_{-12,4}-\frac{2}{9}H_{-6}(\varphi_{-6,4})\\
=&\sum_{6}-2\sum_{4}+\sum_{2}+ O(q) \in J^{\w ,W(E_8)}_{-4, E_8,4}\\
\varphi_{-2,4} =&- \frac{5}{9}E_6\varphi_{-8,4}+\frac{5}{18}E_4^2\varphi_{-10,4}+\frac{5}{324}
(E_{4}^3\varphi_{-14,4}-E_{4}^2E_6\varphi_{-16,4})\\
&-\frac{5}{54}E_{4}E_6\varphi_{-12,4}+\frac{1}{6}E_4\varphi_{-6,4} +12H_{-4}(\varphi_{-4,4})\\
=&-7\sum_{4}+8\sum_{2}-240+ O(q) \in J^{\w ,W(E_8)}_{-2, E_8,4}\\
\varphi_{0,4} =&H_{-2}(\varphi_{-2,4})=2\sum_2 -120+ O(q) \in J^{\w ,W(E_8)}_{0, E_8,4}
\end{align*}

\begin{equation}
\begin{split}
\psi_{-8,4} =&\frac{\vartheta_{E_8}\lvert T_{-}(4)-73\vartheta_{E_8}(\tau,2\mathfrak{z})}{72\Delta}
=*\sum_{\sigma\in W(E_8)}\left[-\frac{1}{\Delta}\prod_{i=1}^8\vartheta(\tau,2z_i)\right](\tau,\sigma(\mathfrak{z}))\\
=&\sum_{8'}-\sum_{8{''}}+ O(q) \in J^{\w ,W(E_8)}_{-8, E_8,4}.
\end{split}
\end{equation}

We now arrive at our main theorem in this subsection.
\begin{theorem}\label{Th index 4}
The space $J^{\w ,W(E_8)}_{*, E_8,4}$ is a free $M_*$-module generated by ten weak Jacobi forms. More precisely, we have 
$$ J^{\w ,W(E_8)}_{*, E_8,4}=M_* \langle \varphi_{-2k,4},\; 0\leq k \leq 8; \;\psi_{-8,4} \rangle.$$
\end{theorem}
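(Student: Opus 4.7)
The plan is to parallel the index-2 and index-3 arguments (Theorems \ref{Th index 2} and \ref{Th index 3}). By Theorem \ref{MTH}, $J^{\w,W(E_8)}_{*,E_8,4}$ is a free $M_*$-module of rank $r(4)=10$, so it suffices to verify that the ten listed Jacobi forms are $M_*$-linearly independent; they then automatically form a basis.

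The minimum-weight analysis carried out immediately above the theorem already shows that $-16$ is the minimum weight and that $\varphi_{-16,4}$ spans the one-dimensional space $J^{\w,W(E_8)}_{-16,E_8,4}$. I would then proceed weight by weight, using $q^0$-term comparisons analogous to the linear systems $Ax=0$ and $Bx=0$ preceding the theorem, to check that each $\varphi_{-2k,4}$ for $k=7,6,\dots,1$ lies outside the $M_*$-span of the lower-weight $\varphi$-forms: the highest-norm orbit appearing in $[\varphi_{-2k,4}]_{q^0}$ (successively $\sum_{16'}$, $\sum_{14'}$, $\sum_{12}$, $\sum_{10}$, $\sum_{8}$, $\sum_{6}$, $\sum_{4}$) cannot be produced with matching coefficients by any $M_*$-combination of earlier $\varphi$-forms, by a small explicit linear-system argument. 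The form $\varphi_{0,4}$ is a generator because $[\varphi_{0,4}]_{q^0}(\tau,0)\neq 0$, whereas any $M_*$-combination of negative-weight generators has $q^0$-term vanishing at $\mathfrak{z}=0$.

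The substantive new step is showing that $\psi_{-8,4}$ is not an $M_*$-combination of the nine $\varphi$-forms. The key observation is the following invariant: for any two Weyl orbits $\orb(m'),\orb(m'')$ with $(m',m')=(m'',m'')$, the ratio of their coefficients in the $q^0$-term of a weak Jacobi form is preserved both under the operators $H_k$ of Lemma \ref{diffoperator} (since the multiplicative factor $\tfrac{4-k}{12}-\tfrac{(\ell,\ell)}{2t}$ depends on $\ell$ only through $(\ell,\ell)$) and under multiplication by any element of $M_*$ (which contributes only its constant value at $q=0$). The explicit formulas above show that each of $\varphi_{-16,4},\varphi_{-14,4},\varphi_{-12,4},\varphi_{-10,4},\varphi_{-8,4},\varphi_{-6,4}$ has coefficient ratio $\sum_{8''}:\sum_{8'}=1:4$ in its $q^0$-term, so every $M_*$-combination of the nine $\varphi$-forms inherits this ratio. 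Since $[\psi_{-8,4}]_{q^0}=\sum_{8'}-\sum_{8''}+O(q)$ has ratio $-1:1$, the form $\psi_{-8,4}$ cannot be written as such a combination, giving the required independence.

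The main obstacle will be the bookkeeping of the iterative construction: at each step one must verify that the chosen combination of $H_k$ and Eisenstein-series corrections yields a nonzero form with the stated $q^0$-term. This reduces to explicit (but mechanical) checks of solvability of linear systems in the orbit coefficients, streamlined by the $1:4$ ratio invariance. Once these are completed, the ten forms are $M_*$-linearly independent, and Theorem \ref{MTH} yields the claimed free-module structure.
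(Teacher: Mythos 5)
There is a genuine gap, and it sits in your very first sentence: in a free graded $M_*$-module of rank $10$, ten $M_*$-linearly independent homogeneous elements do \emph{not} automatically form a basis. (Already for rank one: $E_4\vartheta_{E_8}$ is a nonzero, hence $M_*$-independent, element of the free rank-one module $J^{\w,W(E_8)}_{*,E_8,1}$, but it does not generate it.) Theorem \ref{MTH} gives only the rank, not the weights of the generators; a priori the module could require, say, two generators in weight $-14$, in which case your ten forms would be independent yet fail to span. Consequently everything you propose to verify --- that each $\varphi_{-2k,4}$ lies outside the $M_*$-span of the lower-weight forms, and that $\psi_{-8,4}$ is not an $M_*$-combination of the $\varphi$'s --- only establishes the lower bound (independence), which is the easy half. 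The substantive half of the paper's proof is the \emph{upper} bound: for each negative weight one must show that \emph{every} weak Jacobi form of that weight already lies in the $M_*$-span of the listed generators. The paper does this by the now-familiar mechanism: given a hypothetical extra form $f$ of weight $-14$ (the other weights being similar), one applies the differential operators repeatedly, imposes the vanishing of the $q^0$-terms at $\mathfrak{z}=0$ together with Lemma \ref{lf}, obtains a square linear system $Cx=0$ with only the trivial solution, concludes that $[f]_{q^0}=*(\sum_{8'}-\sum_{8''})$, and then kills $f$ by observing that $\Delta f$ would be a holomorphic Jacobi form of negative weight. A separate argument is then needed to show that no form of weight $-6$ with $q^0$-term $\sum_{8'}-\sum_{8''}$ exists (via the relation $(E_6\psi_{-8,4}-E_4\psi_{-6,4})/\Delta\in J^{\w,W(E_8)}_{-14,E_8,4}$), so that $\psi_{-8,4}$ contributes exactly one generator in weight $-8$ and nothing above. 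None of this appears in your proposal.

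That said, your observation about the preserved ratio of coefficients of equal-norm orbits under $H_k$ and under multiplication by $M_*$ is correct and is a clean way to see that $\psi_{-8,4}$, whose $q^0$-term has $\sum_{8''}:\sum_{8'}=-1:1$ rather than the $1:4$ carried by all the $\varphi$-forms, is independent of them; the paper leaves this point implicit. But to repair the proof you must replace the appeal to ``independence implies basis'' by the weight-by-weight dimension bounds sketched above.
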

\begin{proof}
It is sufficient to show that there is no any other weak Jacobi form of weight less than $-4$ which is independent of $\varphi_{-2k,4},\; 0\leq k \leq 8$, and $\psi_{-8,4}$. We only prove that there is no weak Jacobi form of weight $-14$ independent of $\varphi_{-14,4}$ because other cases are similar. Suppose that there exists a weak Jacobi form of weight $-14$ which is linearly independent of $\varphi_{-14,4}$, noted by $f$. We assume  
$$
[f]_{q^0}=240c_0 +c_1 \sum_{2} + c_2 \sum_{4}+c_3\sum_{6} +c_4\sum_{8} + c_5 \sum_{10} +c_6\sum_{12} +c_7\sum_{14'}\neq 0.
$$ 
Once again, we can construct weak Jacobi forms of weight $-12$, $-10$, $-8$, $-6$, $-4$, $-2$ and $0$, respectively. They have $q^0$-terms of the following form (order: $240c_0, c_1\sum_2, \cdots , c_7\sum_{14'}$)
\begin{align*}
&\text{weight} \quad -14: & (c_{1,j})_{j=1}^8=(1,1,1,1,1,1,1,1,1)\\
&\text{weight} \quad -14+2(i-1): &c_{i,j}=\frac{25-2i-3j}{12}c_{i-1,j}
\end{align*}
where $2\leq i \leq 8$, $1\leq j \leq 8$.  For each Jacobi form of negative weight, if we put $\mathfrak{z}=0$ then its $q^0$-term will become zero. For the Jacobi form of weight zero, we can modify $c_{8,j}$ to $(14-6j)c_{8,j}$ by Lemma \ref{lf}. We then get a system of linear equations: 
$$ Cx=0, \quad C=(c_{i,j})_{8\times 8}, \quad x=(c_0,c_1,\cdots,c_7)^t.$$
By direct calculation, it has only the trivial solution. Therefore the $q^0$-term of $f$ is 
$$[f]_{q^0}= *\left(\sum_{8'}-\sum_{8{''}}\right).$$
Thus, $\Delta f$ is a holomorphic Jacobi form of weight $-2$ and then we have $*=0$, which contradicts our assumption.

At the end of the proof, we explain why there is no weak Jacobi form $\psi_{-6,4}$ of weight $-6$ with $[\psi_{-6,4}]_{q^0}= \sum_{8'}-\sum_{8{''}}$. If $\psi_{-6,4}$ exists, then $\psi_{-8,4}$, $\psi_{-6,4}$ and $\varphi_{-14,4}$ are free over $M_*$. This contradicts the fact that $(E_6\psi_{-8,4}-E_4\psi_{-6,4})/\Delta$ is in $J_{-14,E_8,4}^{\w ,W(E_8)}$.
\end{proof}

In the rest of this subsection, we study the spaces of holomorphic Jacobi forms and Jacobi cusp forms of index $4$. We first construct many basic Jacobi forms.

\begin{align}
A_4 =& \vartheta_{E_8}(\tau,2\mathfrak{z})=1+q\sum_{8{''}}+O(q^2)\\
B_4=&\frac{1}{33}B_2\lvert T_{-}(2)+\frac{2}{55}\Delta \varphi_{-6,4}= 1+q\left[\frac{1}{15}\sum_{8{''}}-\frac{28}{15}\sum_6-\frac{4}{15}\sum_2-8  \right]+O(q^2)
\end{align}

\begin{equation}
\begin{split}
C_{8,4}=&\frac{1}{54}\Delta(E_{4}^3\varphi_{-16,4}-E_{4}E_6\varphi_{-14,4}+6E_{4}^2\varphi_{-12,4}-18E_{6}\varphi_{-10,4}+36E_{4}\varphi_{-8,4})\\
=&q\left[\frac{1}{5}\sum_{8{''}}+\frac{4}{5}\sum_{8'}-4\sum_{6}
+6\sum_{4}-4\sum_{2}+240\right] + O(q^2) \in J^{W(E_8)}_{8, E_8,4}
\end{split}
\end{equation}

\begin{equation}
\begin{split}
U_{10,4}=&-\frac{5}{324}\Delta(E_{4}^2E_6\varphi_{-16,4}-E_{4}^3\varphi_{-14,4}+6E_{4}E_6\varphi_{-12,4}-18E_{4}^2\varphi_{-10,4}\\
&+36E_{6}\varphi_{-8,4}-\frac{54}{5}E_{4}\varphi_{-6,4}) -*\Delta^2\varphi_{-14,4}\\
=&q\left[\sum_{6}-3\sum_{4}+3\sum_{2}-240\right] + O(q^2) \in J^{\cusp ,W(E_8)}_{10, E_8,4}
\end{split}
\end{equation}

\begin{equation}
\begin{split}
U_{12,4}=&-\frac{5}{324}\Delta(E_{4}E_6^2\varphi_{-16,4}-E_{4}^2E_6\varphi_{-14,4}+6E_{6}^2\varphi_{-12,4}-18E_{4}E_6\varphi_{-10,4}\\
&+36E_{4}^2\varphi_{-8,4}-\frac{54}{5}E_{6}\varphi_{-6,4})
 -*\Delta^2E_4\varphi_{-16,4}\\
=&q\left[\sum_{6}-3\sum_{4}+3\sum_{2}-240\right] + O(q^2) \in J^{\cusp ,W(E_8)}_{12, E_8,4}
\end{split}
\end{equation}

Similar to the case of index $3$, we can show the following identities
\begin{align}
\dim J^{W(E_8)}_{2k, E_8,4}&= \dim J^{\w ,W(E_8)}_{2k, E_8,4}-13, \quad k\geq 2\\
\dim J^{\cusp ,W(E_8)}_{2k, E_8,4}&= \dim J^{W(E_8)}_{2k, E_8,4}-4, \quad k\geq 4,
\end{align}
and use them to prove the next theorem.
\begin{theorem}\label{Th index 4h}
\noindent
\begin{enumerate}
\item The space $J^{W(E_8)}_{*, E_8,4}$ is a free $M_*$-module generated by the following ten holomorphic Jacobi forms
\begin{align*}
&\operatorname{weight} \; 4:& &A_4&   &\Delta\psi_{-8,4}&\\
&\operatorname{weight}\; 6:& &B_4&   &\Delta\varphi_{-6,4}&\\
&\operatorname{weight} \; 8:& &C_{8,4}& &\Delta\varphi_{-4,4}&   &\Delta^2\varphi_{-16,4}&\\
&\operatorname{weight} \; 10:& &\Delta\varphi_{-2,4}& &\Delta^2\varphi_{-14,4}&\\
&\operatorname{weight} \; 12:& &\Delta^2\varphi_{-12,4}&
\end{align*}
\item The space $J^{\cusp ,W(E_8)}_{*, E_8,4}$ is a free $M_*$-module generated by the following ten Jacobi cusp forms
\begin{align*}
&\operatorname{weight} \; 8:&  &\Delta\varphi_{-4,4}-*\Delta^2\varphi_{-16,4}&\\
&\operatorname{weight}\; 10:& &\Delta\varphi_{-2,4}-*\Delta^2\varphi_{-14,4}&    &U_{10,4}& \\
&\operatorname{weight} \; 12:& &\Delta\varphi_{0,4}-*\Delta^2E_4\varphi_{-16,4}&    &U_{12,4}&  &\Delta^2\varphi_{-12,4}&\\
&\operatorname{weight} \; 14:& &\Delta^2(E_4\varphi_{-14,4}-E_6\varphi_{-16,4})& &\Delta^2\varphi_{-10,4}& \\
&\operatorname{weight} \; 16:& &\Delta^2\varphi_{-8,4}& &\Delta^2\psi_{-8,4}&
\end{align*}
\end{enumerate}
\end{theorem}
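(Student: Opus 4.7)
The proof follows the same strategy as the index-$3$ case (Theorems \ref{Th index 3h} and \ref{thcusp3}). By Lemma \ref{lem: module}, the three $M_*$-modules $J^{\w,W(E_8)}_{*,E_8,4}\supsetneq J^{W(E_8)}_{*,E_8,4}\supsetneq J^{\cusp,W(E_8)}_{*,E_8,4}$ are free of the same rank, which equals $r(4)=10$ by Theorem \ref{MTH}. The plan is therefore (a) to establish the two dimension-drop formulas
\begin{align*}
\dim J^{W(E_8)}_{2k,E_8,4} &= \dim J^{\w,W(E_8)}_{2k,E_8,4}-13,\qquad k\geq 2,\\
\dim J^{\cusp,W(E_8)}_{2k,E_8,4} &= \dim J^{W(E_8)}_{2k,E_8,4}-4,\qquad k\geq 4,
\end{align*}
and (b) to verify that the ten candidate generators listed in each basis are $M_*$-linearly independent and realize the correct dimension at every small weight.

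For step (a) on the holomorphic side, I would construct, for each $k\geq 2$, an explicit $13$-dimensional subspace $\mathfrak{A}_{2k}\subset J^{\w,W(E_8)}_{2k,E_8,4}$ made of Eisenstein-series and $\Delta$-multiples of the negative-weight generators of Theorem \ref{Th index 4}. By Lemma \ref{Lemtest}(3), the obstruction to holomorphy at index $4$ is a $13$-dimensional linear functional: the $9$ coefficients of the non-constant orbits in the $q^0$-term together with the $4$ coefficients of the forbidden large-norm orbits $\sum_{10},\sum_{12},\sum_{14'},\sum_{16'}$ in the $q^1$-term. The key check is that the obstruction vectors of a chosen spanning set of $\mathfrak{A}_{2k}$ form a non-singular $13\times 13$ matrix, so that every $\phi\in J^{\w,W(E_8)}_{2k,E_8,4}$ can be corrected by a unique element of $\mathfrak{A}_{2k}$ to land in $J^{W(E_8)}_{2k,E_8,4}$. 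The cuspidal drop is handled identically, but with a $4$-dimensional obstruction (the constant in $q^0$, the coefficients of $\sum_{8'},\sum_{8''}$ in $q^1$, and the coefficient of $\sum_{16'}$ in $q^2$) and a matching $4$-dimensional subspace $\mathfrak{B}_{2k}\subset J^{W(E_8)}_{2k,E_8,4}$ built out of $E_{2k-4}A_4$, $E_{2k-4}\Delta\psi_{-8,4}$ and two further explicit holomorphic forms of weight $2k$.

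For step (b), with the dimension formulas in hand, $\dim J^{W(E_8)}_{2k,E_8,4}$ and $\dim J^{\cusp,W(E_8)}_{2k,E_8,4}$ can be computed weight-by-weight from Theorem \ref{Th index 4} and the standard dimensions of $M_w$. At the singular weight $2k=4$, Lemma \ref{Lem:singular} directly gives $\{A_4,X_4\}$ as a basis, and a comparison of Fourier coefficients shows that $X_4$ is a scalar multiple of $\Delta\psi_{-8,4}$. For $2k=6,8,10,12$ an induction on weight, comparing leading Fourier coefficients from the explicit constructions of Subsection \ref{Subsec:5.4}, shows that the new generators listed in the theorem together with $E_4$- and $E_6$-multiples of earlier generators fill out the full space. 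The cuspidal bases are assembled analogously, after subtracting the unique multiple of a higher-weight form required to cancel the remaining cuspidal obstruction; this accounts for the asterisks in expressions such as $\Delta\varphi_{-4,4}-*\Delta^2\varphi_{-16,4}$.

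The main obstacle will be the uniform non-singularity check for the $13\times 13$ obstruction matrix across the small weights $2k=4,6$, where certain Eisenstein-series multipliers degenerate ($E_0=1$ and no $E_2\in M_*$) and must be replaced by $\Delta$-multiples of higher-weight generators. Thanks to the explicit $q^0$- and $q^1$-expansions of $\varphi_{-2j,4}$ and $\psi_{-8,4}$ recorded in Subsection \ref{Subsec:5.4}, this reduces to a finite but delicate linear-algebra calculation, more intricate than but entirely parallel to the index-$3$ verification in Theorem \ref{Th index 3h}.
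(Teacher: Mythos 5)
Your proposal follows the same route as the paper: the paper's own proof of this theorem consists precisely of the two dimension-drop identities $\dim J^{W(E_8)}_{2k,E_8,4}=\dim J^{\w ,W(E_8)}_{2k,E_8,4}-13$ for $k\geq 2$ and $\dim J^{\cusp ,W(E_8)}_{2k,E_8,4}=\dim J^{W(E_8)}_{2k,E_8,4}-4$ for $k\geq 4$, established ``similar to the case of index $3$'' by exhibiting complementary subspaces of Eisenstein- and $\Delta$-multiples of the weak generators, followed by a weight-by-weight verification of the listed forms; your identification of the $13$ holomorphic obstructions (the $9$ non-constant orbit coefficients in $q^0$ plus the coefficients of $\sum_{10},\sum_{12},\sum_{14'},\sum_{16'}$ in $q^1$) and of the $4$ cuspidal obstructions is exactly what Lemmas \ref{lem2.3} and \ref{Lemtest}(3) give. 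One subsidiary claim in your step (b) is false: $X_4$ is \emph{not} a scalar multiple of $\Delta\psi_{-8,4}$, since $X_4=1+O(q)$ has nonvanishing $q^0$-term while $[\Delta\psi_{-8,4}]_{q^0}=0$; what is true, and suffices for your argument, is that $\Delta\psi_{-8,4}$ is a nonzero element of $J^{W(E_8)}_{4,E_8,4}=\CC A_4\oplus\CC X_4$ (indeed proportional to $\frac{1}{73}\vartheta_{E_8}\lvert T_{-}(4)-A_4$ by its very construction) which is visibly independent of $A_4$, so $\{A_4,\Delta\psi_{-8,4}\}$ still spans the weight-$4$ piece.
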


Remark that these constants $*$ are chosen to cancel the term $q^2\sum_{16'}$ in the above constructions of Jacobi cusp forms.

\subsection{Isomorphisms between spaces of Jacobi forms}\label{Subsec:5.5}
In this subsection we use the embeddings of lattices to build two isomorphisms between the spaces of certain Jacobi forms.  We denote by $J_{k,L,t}^{\w ,\Orth(L)}$ the space of weak Jacobi forms of weight $k$ and index $t$ for the lattice $L$ which are invariant under the integral orthogonal group $\Orth(L)$. The dual lattice of $L$ is denoted by $L^*$.

We first consider the case of index  $2$. Recall that the Nikulin's lattice is defined as (see \cite[Example 4.3]{GN18})
\begin{equation*}
N_8=\latt{8A_1, h=(a_1+\cdots + a_8)/2}\cong D_8^* (2),
\end{equation*}
where $(a_i,a_j)=2\delta_{ij}$, $(h,h)=4$. It is easy to check that $N_8$ is a sublattice of $E_8$. We then have
$$ N_8<E_8 \Rightarrow E_8 < N_8^* \Rightarrow E_8(2) < N_8^*(2)\cong D_8.$$
This embedding of lattices induces the following isomorphism.

\begin{proposition}
The natural map
\begin{align*}
J_{k,D_8,1}^{\w ,\Orth(D_8)} &\longrightarrow J_{k,E_8,2}^{\w ,W(E_8)}\\
\phi(\tau,Z) &\longmapsto \frac{1}{\abs{W(E_8)}}\sum_{\sigma\in W(E_8)} \widehat{\phi}(\tau,\sigma(\mathfrak{z}))
\end{align*}
is a $M_*$-modules isomorphism. Here, $Z=\sum_{i=1}^8z_ie_i$, the set $\{e_i: 1\leq i \leq 8\}$ is the standard basis of $\RR^8$, and $\widehat{\phi}(\tau,\mathfrak{z})$ is defined as
\begin{equation*}
\widehat{\phi}(\tau,\mathfrak{z})=\phi(\tau,z_1+z_2,z_1-z_2,z_3+z_4,z_3-z_4,z_5+z_6,z_5-z_6,z_7+z_8,z_7-z_8).
\end{equation*}
\end{proposition}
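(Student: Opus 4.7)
The plan has three steps: verify the map is well-defined, match the graded ranks on both sides, and then compute enough to show the map is invertible.

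First I would check well-definedness. The substitution $(z_1, \ldots, z_8) \mapsto Z = (z_1+z_2, z_1-z_2, \ldots, z_7+z_8, z_7-z_8)$ realises the lattice embedding $E_8(2) \hookrightarrow D_8$ discussed in the paragraph preceding the proposition: a direct calculation shows that if $v = \sum_i x_i e_i$ lies in $E_8$, then its image lies in $D_8$, and the $E_8$-bilinear form of $v$ equals one half of the $D_8$-bilinear form of its image. Consequently $\widehat{\phi}$ is a weak Jacobi form on $E_8(2)$ of weight $k$ and index $1$, equivalently a weak Jacobi form on $E_8$ of weight $k$ and index $2$. Averaging over $W(E_8)$ preserves the weak Jacobi form property and produces a $W(E_8)$-invariant form, so the target space is correct.

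Next I would match the graded ranks. Applying Wirthm\"uller's Theorem \ref{thwi} to $R = C_8$ (using $W(C_8) = \Orth(D_8)$ and $L(C_8) = D_8$), the bigraded algebra $J^{\w,\Orth(D_8)}_{*,D_8,*}$ is polynomial over $M_*$, and its index-$1$ part $J^{\w,\Orth(D_8)}_{*,D_8,1}$ is a free $M_*$-module of rank $3$ with graded generators of weights $0, -2, -4$. By Theorem \ref{Th index 2}, $J^{\w,W(E_8)}_{*,E_8,2}$ is also a free $M_*$-module of rank $3$, with generators $\varphi_{-4,2}, \varphi_{-2,2}, \varphi_{0,2}$ of the same weights $-4, -2, 0$. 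Since the map is weight-preserving, it is a morphism of graded free $M_*$-modules with matching graded rank. Relative to these bases the matrix $M = (M_{ij})$ has entries $M_{ij} \in M_{d_j-d_i}$ with $d_i \in \{-4,-2,0\}$; because $M_{-4} = M_{-2} = M_2 = 0$, $M_0 = \CC$ and $M_4 = \CC E_4$, the matrix is upper triangular with diagonal entries in $\CC$ and a single off-diagonal entry in $\CC E_4$. Its determinant is therefore a complex scalar, and the map is a graded $M_*$-module isomorphism if and only if the three diagonal constants are nonzero.

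Finally I would verify these three constants. For the weight-$0$ entry I would invoke the Remark following Theorem \ref{Th index 2}, which exhibits $\varphi_{0,2}$ (up to a nonzero scalar) as the averaged pullback of an explicit $\Orth(D_8)$-invariant ratio of theta functions in the variables $z_i \pm z_j$; this immediately shows the weight-$0$ diagonal constant is nonzero. For weights $-4$ and $-2$ I would take the standard graded generators of $J^{\w,\Orth(D_8)}_{*,D_8,1}$ built as theta products in the variables $z_i \pm z_j$ (analogues on $D_8$ of the classical Eichler--Zagier index-$1$ generators for $A_1$), compute the $q^0$-term of their averaged pullback using Lemma \ref{lemorbit}, and match against the known $q^0$-terms of $\varphi_{-4,2}$ and $\varphi_{-2,2}$ recorded in Subsection \ref{Subsec:5.2}. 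The main obstacle is this last step: one must verify that each averaged pullback is genuinely nonzero, because the averaging could in principle kill a pullback (the subgroup $W(E_8) \cap \Orth(D_8)$ is strictly smaller than $W(E_8)$, since $|\Orth(D_8)| = 2^8 \cdot 8!$ is much smaller than $|W(E_8)|$). A short orbit-count via Lemma \ref{lem1} should suffice. Once the three diagonal constants are shown to be nonzero, the determinant of $M$ is a nonzero element of $\CC \subset M_*$, and the map is an $M_*$-module isomorphism.
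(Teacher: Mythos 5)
Your proposal is correct and follows essentially the same route as the paper: both sides are identified as free $M_*$-modules of rank $3$ with generators in weights $0$, $-2$, $-4$ (Wirthm\"uller's theorem for $C_8$, i.e. $\Orth(D_8)$, on the source and Theorem \ref{Th index 2} on the target), and everything reduces to a non-degeneracy check on $q^0$-terms. The ``main obstacle'' you flag --- that averaging over $W(E_8)$ might kill a pullback --- is resolved by exactly the observation the paper makes: the $\Orth(D_8)$-orbits of $(1,0,\dots,0)$ and $(\tfrac12,\dots,\tfrac12)$ are sent to the distinct Weyl orbits $\sum_2$ and $\sum_4$ respectively, so the $q^0$-term of the averaged pullback is obtained from $[\phi]_{q^0}$ by an injective substitution of orbit sums with positive coefficients and therefore cannot vanish unless $[\phi]_{q^0}$ does.
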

\begin{proof}
Under the discriminant groups, the $\Orth(D_8)$-orbit of $(\frac{1}{2}, ..., \frac{1}{2})$ corresponds to $\sum_4$. The $\Orth(D_8)$-orbit of $(1,0, ..., 0)$ corresponds to $\sum_2$. From this, we assert that the above map is injective by comparing $q^0$-terms of Jacobi forms.  The space $J_{*,D_8,*}^{\w ,\Orth(D_8)}$ is in fact the space of Weyl invariant weak Jacobi forms for the root system $C_8$. By Table \ref{tablewi},  there exist weak Jacobi forms $\phi_{-4,D_8,1}$, $\phi_{-2,D_8,1}$, $\phi_{0,D_8,1}$. Thus, by Theorem \ref{Th index 2}, we prove the surjectivity of the above map.
\end{proof}

Remark that 
$$
N_8<2D_4<E_8 \Rightarrow E_8(2)<2D_4^*(2)=2D_4<D_8
$$
and the induced map 
$$J_{k,2D_4,1}^{\w ,\Orth(2D_4)} \longrightarrow J_{k,E_8,2}^{\w ,W(E_8)}$$
is also a $M_*$-modules isomorphism.
 
We next consider the case of index $3$. We see from the extended Coxeter-Dynkin diagram of $E_8$
(see Figure \ref{figE8}) that $A_2\oplus E_6$ is a sublattice of $E_8$. Observing the extended Coxeter-Dynkin diagram of $E_6$ (see Figure \ref{figE6}), 
\begin{figure}[tb]
\noindent\[
\begin{tikzpicture}
\begin{scope}[start chain]
\foreach \dyni in {1,3,4,5,6} {
\dnode{\dyni}
}
\end{scope}
\begin{scope}[start chain=br going above]
\chainin (chain-3);
\dnodebr{2}
\dnodebr{E_6}
\end{scope}
\end{tikzpicture}
\]
\caption{Extended Coxeter-Dynkin diagram of $E_6$} 
\label{figE6}
\end{figure}
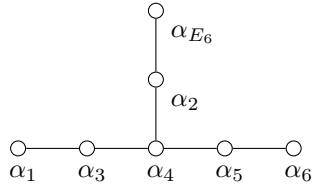
we find that $3A_2$ is a sublattice of $E_6$. We then have
$$ 4A_2<E_8 \Rightarrow E_8<4A_2^* \Rightarrow E_8(3)<4A_2^*(3)\cong 4A_2.$$

In a similar way, we can prove the following result.

\begin{proposition}
The natural map
\begin{align*}
J_{k,4A_2,1}^{\w ,\Orth(4A_2)} &\longrightarrow J_{k,E_8,3}^{\w ,W(E_8)}\\
\varphi(\tau,Z) &\longmapsto \frac{1}{\abs{W(E_8)}}\sum_{\sigma\in W(E_8)} \widetilde{\varphi}(\tau,\sigma(\mathfrak{z}))
\end{align*}
is a $M_*$-modules isomorphism. Here, we fix the standard model of $A_2$
\begin{align*}
A_2=\ZZ\beta_1 +\ZZ\beta_2,\; (\beta_i,\beta_i)=2,\; i=1,2,\; (\beta_1,\beta_2)=1,
\end{align*}
and $\widetilde{\varphi}(\tau,\mathfrak{z})$ is defined as
\begin{equation*}
\begin{split}
\widetilde{\varphi}(\tau,\mathfrak{z})=&\varphi(\tau,z_6-z_7,z_7+z_8,(z_1-z_2-z_3-z_4-z_5-z_6-z_7+z_8)/2,-z_1+z_2,\\
&-z_1-z_2,(z_1+z_2+z_3+z_4+z_5-z_6-z_7+z_8)/2,
-z_3+z_4,-z_4+z_5).
\end{split}
\end{equation*}
\end{proposition}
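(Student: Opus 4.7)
The plan is to mirror the argument given for the index-$2$ case in the previous proposition, with $D_8$ replaced by $4A_2$. First I would verify well-definedness: the inclusion $4A_2 < E_8$ dualizes to $E_8 \cong E_8^\vee \hookrightarrow (4A_2)^\vee$, and rescaling both sides by $3$ gives an isometric embedding $E_8(3) \hookrightarrow (4A_2)^\vee(3) \cong 4A_2$. Under this embedding, the pullback $\widetilde{\varphi}(\tau,\mathfrak{z})$ of a weak Jacobi form of weight $k$ and index $1$ for $4A_2$ is a weak Jacobi form of weight $k$ and index $3$ for $E_8$ (since the quasi-periodicity and modularity rescale correctly), and the subsequent average over $W(E_8)$ produces an element of $J^{\w,W(E_8)}_{k,E_8,3}$.

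For injectivity, I would compare $q^0$-terms. By Lemma~\ref{lem1}, the $q^0$-term of any element of $J^{\w,W(E_8)}_{*,E_8,3}$ is a $\CC$-linear combination of $1, \sum_2, \sum_4, \sum_6, \sum_{8'}$ (exactly the orbits $\orb(m)$ with $T(m)\le 3$, noting that $\sum_{8''}$ is excluded since $T(2w_8)=4>3$); by Theorem~\ref{Th index 3} these five functions span a $5$-dimensional $q^0$-space. On the $4A_2$ side I would identify five $\Orth(4A_2)$-orbits on $(4A_2)^\vee$, namely the trivial orbit together with the four smallest nontrivial orbits parametrised by the number of $A_2$-summands contributing a fundamental weight, and show that their pullbacks give linearly independent combinations of $1, \sum_2, \sum_4, \sum_6, \sum_{8'}$. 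This is the exact analogue of the assignments $(1,0,\ldots,0)\mapsto \sum_2$ and $(\tfrac12,\ldots,\tfrac12)\mapsto \sum_4$ used in the index-$2$ case, and it forces the induced map on $q^0$-terms, hence on full Jacobi forms, to be injective.

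For surjectivity, I would produce five independent elements on the $4A_2$ side. Wirthm\"uller's theorem applied to $G_2$ (Table~\ref{tablewi}) shows that $J^{\w,\Orth(A_2)}_{*,A_2,1}$ is a free $M_*$-module of rank $2$ with generators of weights $0$ and $-2$. Taking tensor products over the four copies of $A_2$ and symmetrizing by $S_4$ (the permutation subgroup of $\Orth(4A_2) = \Orth(A_2)^4 \rtimes S_4$) yields five independent $M_*$-generators of $J^{\w,\Orth(4A_2)}_{*,4A_2,1}$ of weights $0,-2,-4,-6,-8$, one in each weight. Together with injectivity and Theorem~\ref{Th index 3}, the matching of ranks and weight profiles forces the map to be an isomorphism.

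The main obstacle is the explicit verification of the orbit correspondence that secures injectivity: the embedding realised by $\widetilde{\varphi}$ is non-obvious because only three of the four $A_2$ summands lie inside the standard coordinate $A_2^3$-sublattice of $E_8$, while the fourth sits diagonally with half-integer coordinates. One must directly compute the pullback of the five smallest $\Orth(4A_2)$-orbits on $(4A_2)^\vee$ and confirm that the resulting $5\times 5$ transition matrix between the two bases of $q^0$-spaces is non-degenerate. Once this finite combinatorial check is in place, the remainder of the proof is essentially formal.
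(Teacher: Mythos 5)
Your proposal is correct and follows essentially the same route as the paper, whose proof of this proposition is literally "in a similar way" to the index-$2$ case: injectivity by matching the five admissible orbits in the $q^0$-terms on both sides, and surjectivity by producing source forms of weights $0,-2,-4,-6,-8$ as $S_4$-symmetrized tensor products of the two $\Orth(A_2)$-invariant generators and invoking the structure theorem for $J^{\w,W(E_8)}_{*,E_8,3}$ — exactly the forms $\varphi_{2j,4A_2,1}$ the paper writes down immediately after the proposition. The only caution is that you should not assert up front that these five forms generate $J^{\w,\Orth(4A_2)}_{*,4A_2,1}$ (the paper deduces that \emph{from} the isomorphism); for surjectivity it suffices that they exist and that their images, by the $q^0$-term computation, hit a full set of generators of the target.
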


It is known that $J_{*,A_2,1}^{\w ,\Orth(A_2)}$ is generated by $a_{0,1}\in J_{0,A_2,1}^{\w ,\Orth(A_2)}$ and $a_{-2,1}\in J_{-2,A_2,1}^{\w ,\Orth(A_2)}$ over $M_*$ (see Table \ref{tablewi}). Using these two functions, we construct 
\begin{align*}
\varphi_{0,4A_2,1}&= a_{0,1}\otimes a_{0,1}\otimes a_{0,1}\otimes a_{0,1} \in J_{0,4A_2,1}^{\w ,\Orth(4A_2)},\\
\varphi_{-2,4A_2,1}&= \sum a_{-2,1}\otimes a_{0,1}\otimes a_{0,1}\otimes a_{0,1} \in J_{-2,4A_2,1}^{\w ,\Orth(4A_2)},\\
\varphi_{-4,4A_2,1}&= \sum a_{-2,1}\otimes a_{-2,1}\otimes a_{0,1}\otimes a_{0,1} \in J_{-4,4A_2,1}^{\w ,\Orth(4A_2)},\\
\varphi_{-6,4A_2,1}&= \sum a_{-2,1}\otimes a_{-2,1}\otimes a_{-2,1}\otimes a_{0,1} \in J_{-6,4A_2,1}^{\w ,\Orth(4A_2)},\\
\varphi_{-8,4A_2,1}&= a_{-2,1}\otimes a_{-2,1}\otimes a_{-2,1}\otimes a_{-2,1} \in J_{-8,4A_2,1}^{\w ,\Orth(4A_2)},
\end{align*}
here the sums take over all permutations of $4$ copies of $A_2$. Then we conclude from the above isomorphism that $J_{*,4A_2,1}^{\w ,\Orth(4A_2)}$ is generated by $\varphi_{-2j,4A_2,1}$, $0\leq j \leq 4$, over $M_*$. Moreover, the image of $\varphi_{-8,4A_2,1}$ gives a new construction of our generator of index 3 i.e. $\varphi_{-8,3}$.

We can also consider the case of index 5. It is easy to see $2A_4<E_8$, which yields $E_8(5)<2A_4^*(5)$. Unfortunately, the induced map
\begin{equation}\label{isom5}
J_{k,2A_4^*(5),1}^{\w ,\Orth(2A_4)} \longrightarrow J_{k,E_8,5}^{\w ,W(E_8)}
\end{equation} 
is not an isomorphism. In fact, for $\phi\in J_{k,2A_4^*(5),1}^{\w ,\Orth(2A_4)}$, the function $\Delta^2\phi$ is always a holomorphic Jacobi form. But, for $\psi\in J_{k,E_8,5}^{\w ,W(E_8)}$, the function $\Delta^2\psi$ is not a holomorphic Jacobi form in general. It is becauce that the lattice $A_4^*(5)$ satisfies the following $\operatorname{Norm}_2$ condition (see \cite[Lemma 2]{GW17} and \cite[Lemma 3.5]{GW18})
\begin{align*}
&\operatorname{Norm}_2:& &\forall \bar{c} \in L^\vee/L \quad \exists h_c \in \bar{c} : (h_c,h_c)\leq 2.
\end{align*}

\subsection{Pull-backs of Jacobi forms}\label{Subsec:5.6}
We have seen from \S \ref{Subsec:5.3} and \S \ref{Subsec:5.4} that our approach based on differential operators works well when the absolute value of minimal weight equals the maximal norm of Weyl orbits appearing in $q^0$-terms of Jacobi forms. But when the index is larger than $4$, the absolute value of minimal weight will be less than the maximal norm of Weyl orbits, which causes our previous approach to not work well because there are not enough linear equations in this case. In order to study $W(E_8)$-invariant Jacobi forms of higher index, we introduce a new approach relying on pull-backs of Jacobi forms.

For convenience, we first recall some results on classical Jacobi forms introduced by Eichler and Zagier in \cite{EZ}. Let $J_{2k,t}^{\w}$ be the space of weak Jacobi forms of weight $2k$ and index $t$. The ring of weak Jacobi forms of even weight and integral index is a polynomial algebra over $M_*$ generated by two basic weak Jacobi forms ($\zeta=e^{2\pi iz}$)
\begin{align*}
\phi_{-2,1}(\tau,z)&=\zeta+\zeta^{-1}-2+O(q) \in J_{-2,1}^{\w},\\
\phi_{0,1}(\tau,z)&=\zeta+\zeta^{-1}+10+O(q) \in J_{0,1}^{\w}.
\end{align*}

Let $\phi\in J_{k,E_8,t}^{\w ,W(E_8)}$ and $v_4$ be a vector of norm $4$ in $E_8$. Then 
$ 
\phi(\tau,zv_4)
$
is a weak Jacobi form of weight $k$ and index $2t$. In order to compute the Fourier coefficients of $\phi(\tau,zv_4)$, we consider the pull-backs of Weyl orbits. Let $\sum_v$ be a Weyl orbit associated to $v$ defined in \S \ref{Subsec:5.1}. Recall that
$$
\sum_v=\frac{240}{\abs{W(E_8)}} \sum_{\sigma\in W(E_8)} \exp (2\pi i (\sigma(v),\mathfrak{z})).
$$
Since $W(E_8)$ acts transitively on the set $R_4$ of vectors of norm $4$ in $E_8$ (see Lemma \ref{lemorbit}), we have
\begin{align*}
\sum_v(zv_4)=&\frac{240}{\abs{W(E_8)}} \sum_{\sigma\in W(E_8)} \exp (2\pi i (\sigma(v),v_4)z)
= \frac{240}{\abs{W(E_8)}} \sum_{\sigma\in W(E_8)} \exp (2\pi i (v,\sigma(v_4))z)\\
=& \frac{240}{\abs{R_4}}\sum_{l\in R_4} \exp (2\pi i (v,l)z).
\end{align*}
In view of this fact, we define 
\begin{align*}
\max ( \sum_v, v_4 )=\max ( v, R_4)= \max \{ (v,l): l\in R_4   \}.
\end{align*}
It is easy to check that 
\begin{align*}
& \max ( w_1, R_4 )= 4 &  & \max ( w_2, R_4 )= 5 & 
& \max ( w_3, R_4 )= 7 &  & \max ( w_4, R_4 )= 10 & \\
& \max ( w_5, R_4 )= 8 &  & \max ( w_6, R_4 )= 6 & 
& \max ( w_7, R_4 )= 4 &  & \max ( w_8, R_4 )= 2 & 
\end{align*}
and the maximal value can be obtained at $l=(0,0,0,0,0,0,0,2)$. Thus, we get

\begin{align*}
&\max ( \sum_2, v_4 )=2&    &\max ( \sum_4, v_4 )=4& 
&\max ( \sum_{6}, v_4 )=4&    &\max ( \sum_{8'}, v_4 )=5& \\
&\max ( \sum_{8{''}}, v_4 )=4&    &\max ( \sum_{10}, v_4 )=6& 
&\max ( \sum_{12}, v_4 )=6&    &\max ( \sum_{14'}, v_4 )=7& \\
&\max ( \sum_{14{''}}, v_4 )=6&    &\max ( \sum_{16'}, v_4 )=8& 
&\max ( \sum_{16{''}}, v_4 )=7&    &\max ( \sum_{18'}, v_4 )=8& \\
&\max ( \sum_{18{''}}, v_4 )=6&    &\max ( \sum_{20'}, v_4 )=8& 
&\max ( \sum_{20{''}}, v_4 )=8&    &\max ( \sum_{22'}, v_4 )=9& \\
&\max ( \sum_{22{''}}, v_4 )=8&    &\max ( \sum_{24'}, v_4 )=8& 
&\max ( \sum_{24{''}}, v_4 )=9&    &\max ( \sum_{26'}, v_4 )=10& \\
&\max ( \sum_{26{''}}, v_4 )=9&    &\max ( \sum_{28'}, v_4 )=10& 
&\max ( \sum_{30'}, v_4 )=10&    &\max ( \sum_{32'}, v_4 )=11& \\
&\max ( \sum_{32{''}}, v_4 )=10&    &\max ( \sum_{36'}, v_4 )=12.& 
\end{align*}

This new approach can be used to recover some cases of index $3$ and $4$.

\vspace{5mm}

\textbf{Index 3:} Assume that $\phi = \sum_{8'} + \cdots +O(q) \in  J_{-2k,E_8,3}^{\w ,W(E_8)}$, where $\cdots$ stands for the Weyl orbits of norm less than $8$. Then we have 
$$
\phi(\tau,zv_4)=\zeta^{\pm 5}+ \cdots +O(q) \in J_{-2k,6}^{\w},
$$
here $\cdots$ stands for the terms of type $\zeta^{\pm j}$ with $0\leq j\leq 4$. Note that in the above equation the term $\zeta^{\pm 5}$ may have positive coefficient different from $1$. But this does not affect our discussion. Thus, we always omit this type of coefficient hereafter.    We claim that $-2k\geq -8$. 
If $-2k<-8$ i.e. $k>4$, then $J_{-2k,6}^{\w}=\phi_{-2,1}^k\cdot J_{0,6-k}^{\w}$. Since $J_{-2k,6}^{\w}\neq \{0\}$, we have $k\leq 6$. But $J_{-10,6}^{\w}$ is generated by $\phi_{-2,1}^5\phi_{0,1}=\zeta^{\pm 6}+\cdots$ and $J_{-12,6}^{\w}$ is generated by $\phi_{-2,1}^6=\zeta^{\pm 6}+\cdots$. This contradicts the Fourier expansion of $\phi(\tau,zv_4)$.

Assume that $\phi \in  J_{-2k,E_8,3}^{\w ,W(E_8)}$ has no Fourier coefficient $\sum_{8'}$ in its $q^0$-term. By Lemma \ref{Lemtest}, we have $\Delta \phi \in J_{12-2k,E_8,3}^{W(E_8)}$. Thus we get $12-2k>4$ i.e. $-2k>-8$.

\textit{Conclusion}: The possible minimum  weight in this case is $\geq -8$ and the dimension of $J_{-8,E_8,3}^{\w ,W(E_8)}$ is at most one. 

\vspace{5mm}

\textbf{Index 4:} 
Assume that $\phi = \sum_{16'} + \cdots +O(q) \in  J_{-2k,E_8,4}^{\w ,W(E_8)}$ with $k>0$. Then we have 
$$
\phi(\tau,zv_4)=\zeta^{\pm 8}+ \cdots +O(q) \in J_{-2k,8}^{\w}.
$$
Since $J_{-2k,8}^{\w}=\phi_{-2,1}^k\cdot J_{0,8-k}^{\w}$, we have $8-k\geq 0$ i.e. $-2k\geq -16$.
 
Assume that $\phi \in  J_{-2k,E_8,4}^{\w ,W(E_8)}$ has no Fourier coefficient $\sum_{16'}$ in its $q^0$-term. By Lemma \ref{lem2.3} and Lemma \ref{lem2.4}, the function $\eta^{42} \phi$ is a  $W(E_8)$-invariant holomorphic Jacobi form of weight $21-2k$ and index $4$ with a character. In view of the singular weight, we get $21-2k\geq 4$ i.e. $-2k\geq -16$. 

\textit{Conclusion}:  The possible minimum  weight in this case is $\geq -16$.

Assume that $\phi = \sum_{14'} + \cdots +O(q) \in  J_{-2k,E_8,4}^{\w ,W(E_8)}$ with $k>0$. Then we have 
$$
\phi(\tau,zv_4)=\zeta^{\pm 7}+ \cdots +O(q) \in J_{-2k,8}^{\w}.
$$
Since $J_{-2k,8}^{\w}=\phi_{-2,1}^k\cdot J_{0,8-k}^{\w}$, the spaces $J_{-16,8}^{\w}$ and $J_{-14,8}^{\w}$ are all generated by one function with leading Fourier coefficient $\zeta^{\pm 8}$. Thus $k\leq 6$ i.e. $-2k\geq -12$.

Assume that $\phi \in  J_{-2k,E_8,4}^{\w ,W(E_8)}$ has no Fourier coefficients $\sum_{16'}$ and $\sum_{14'}$ in its $q^0$-term. By Lemma \ref{lem2.3} and Lemma \ref{lem2.4}, the function $\eta^{36} \phi$ is a  $W(E_8)$-invariant holomorphic Jacobi form of weight $18-2k$ and index $4$ with a character. In view of the singular weight, we get $18-2k\geq 4$ i.e. $-2k\geq -14$. 

\textit{Conclusion}: The dimension of $J_{-16,E_8,4}^{\w ,W(E_8)}$ is at most one.

\subsection{The case of index 5}\label{Subsec:5.7}
In this subsection we use the approach in \S \ref{Subsec:5.6} to determine the possible minimum weight of the generators of $J_{*,E_8,5}^{\w ,W(E_8)}$.

\textbf{(I)} Assume that $\phi = \sum_{22'} + \cdots +O(q) \in  J_{-2k,E_8,5}^{\w ,W(E_8)}$ with $k>0$. Then we have 
$$
\phi(\tau,zv_4)=\zeta^{\pm 9}+ \cdots +O(q) \in J_{-2k,10}^{\w}.
$$
Since $J_{-2k,10}^{\w}=\phi_{-2,1}^k\cdot J_{0,10-k}^{\w}$, we have $10-k\geq 0$. But when $k= 9$ or $10$, the spaces $J_{-20,10}^{\w}$ and $J_{-18,10}^{\w}$ are all generated by one function with leading Fourier coefficient $\zeta^{\pm 10}$, which contradicts the Fourier expansion of $\phi(\tau,zv_4)$. Therefore, we get $k\leq 8$ i.e. $-2k\geq -16$.

\textbf{(II)} Assume that $\phi \in  J_{-2k,E_8,5}^{\w ,W(E_8)}$ has no Fourier coefficient $\sum_{22'}$ in its $q^0$-term. By Lemma \ref{Lemtest}, the function $\Delta^2\phi \in J_{24-2k,E_8,5}^{W(E_8)}$. By Lemma \ref{Lem:singular}, we have $24-2k\geq 6$ i.e. $-2k\geq -18$.

\textbf{(III)} Assume that $\phi \in  J_{-2k,E_8,5}^{\w ,W(E_8)}$ has no Fourier coefficients $\sum_{22'}$ and $\sum_{20'}$ in its $q^0$-term. By Lemma \ref{lem2.3} and Lemma \ref{lem2.4}, the function $\eta^{44}\phi $ is a $W(E_8)$-invariant Jacobi cusp form of weight $22-2k$ and index $5$ with a character. From the singular weight, it follows that $22-2k>4$ i.e. $-2k\geq -16$.

\textbf{(IV)} Assume that $\phi \in  J_{-2k,E_8,5}^{\w ,W(E_8)}$ has no Fourier coefficients $\sum_{22'}$, $\sum_{20'}$ and $\sum_{18'}$ in its $q^0$-term. By Lemma \ref{lem2.3} and Lemma \ref{lem2.4}, the function $\eta^{40}\phi $ is a $W(E_8)$-invariant Jacobi cusp form of weight $20-2k$ and index $5$ with a character. It follows that $20-2k>4$ i.e. $-2k\geq -14$.

By the discussions above, we get the following result.

\begin{proposition}
\begin{align*}
&\dim J_{-2k,E_8,5}^{\w ,W(E_8)}= 0, \quad  \text{if} \quad -2k\leq -20, \\
&\dim J_{-18,E_8,5}^{\w ,W(E_8)}\leq 1,\\
&\dim J_{-16,E_8,5}^{\w ,W(E_8)}\leq 3.
\end{align*}
Moreover, if the $W(E_8)$-invariant weak Jacobi form of weight $-18$ and index $5$ exists, then its $q^0$-term has no Fourier coefficient $\sum_{22'}$ but must contain Fourier coefficient $\sum_{20'}$.
\end{proposition}

We do not know if the $W(E_8)$-invariant weak Jacobi form of weight $-18$ and index $5$ exists. But the $W(E_8)$-invariant weak Jacobi forms of weight $-16$ and index $5$ do indeed exist. We next show how to construct one such Jacobi form. We first construct a weak Jacobi form of weight $-8$ and index $1$ for the lattice $A_4^*(5)$ invariant under the orthogonal group $\Orth(A_4)$. It is known that 
$$
M_2\left(\Gamma_0(5),\left(\frac{\cdot}{5}\right)\right)=\CC\eta^5(\tau)/\eta(5\tau)+\CC\eta^5(5\tau)/\eta(\tau).
$$ 
Using the analogue of Proposition \ref{Prop:lift}, we can construct two independent holomorphic Jacobi forms $f_1$ and $f_2$ of weight $4$ from the space $M_2(\Gamma_0(5),(\frac{\cdot}{5}))$. Then the function $(*f_1-*f_2)/\Delta$ will be a weak Jacobi form of weight $-8$ for $A_4^*(5)$. 
The tensor product of two copies of this weak Jacobi form defines a weak Jacobi form of weight $-16$ and index 1 for $2A_4^*(5)$, whose image under the  map (\ref{isom5}) gives a $W(E_8)$-invariant weak Jacobi form of weight $-16$ and index 5 with the following $q^0$-term
\begin{equation}
\begin{split}
\varphi_{-16,5}=&240 -6 \sum_{2} +13 \sum_{4}-8\sum_{6} -14\sum_{8} + 28 \sum_{10} -14\sum_{12}\\
&-8\sum_{14}+13\sum_{16}-6\sum_{{18}'}+\sum_{{20}'},
\end{split}
\end{equation}
here and subquently, we use the following notations
$$
c_j{'}\sum_{2j'}+c_j{''}\sum_{2j{''}}=(c_j{'}+c_j{''})\sum_{2j}=c_j\sum_{2j}, \quad j=4,7,8,9,10,11,12,13.
$$
In general, we only know the coefficients $c_j$ and it is hard to calculate the exact values of $c_j'$ and $c_j{''}$. Thus, the above notations are convenient for us.

\subsection{The case of index 6}
In this subsection we discuss the possible minimum weight of the generators of $J_{*,E_8,6}^{\w ,W(E_8)}$.

\textbf{(I)} Assume that $\phi = \sum_{36'} + \cdots +O(q) \in  J_{-2k,E_8,6}^{\w ,W(E_8)}$ with $k>0$. Then we have 
$$
\phi(\tau,zv_4)=\zeta^{\pm 12}+ \cdots +O(q) \in J_{-2k,12}^{\w}.
$$
From $J_{-2k,12}^{\w}=\phi_{-2,1}^k\cdot J_{0,12-k}^{\w}$, we obtain $12-k\geq 0$ i.e. $-2k\geq -24$.

\textbf{(II)} Assume that $\phi = \sum_{32'}+*\sum_{32''} + \cdots +O(q) \in  J_{-2k,E_8,6}^{\w ,W(E_8)}$ with $k>0$. Similarly, we have 
$$
\phi(\tau,zv_4)=\zeta^{\pm 11}+ \cdots +O(q) \in J_{-2k,12}^{\w},
$$
and then $k\leq 12$. But when $k=11$ or $12$, the space $J_{-2k,12}^{\w}$ is generated by one function with leading $q^0$-term $\zeta^{\pm 12}$, which gives a contradiction. Thus, $k\leq 10$ i.e. $-2k\geq -20$.

\textbf{(III)} Assume that $\phi \in  J_{-2k,E_8,6}^{\w ,W(E_8)}$ has no Fourier coefficient $\sum_{36'}$ in its $q^0$-term. Then the function $\eta^{64}\phi$ is a $W(E_8)$-invariant holomorphic Jacobi form of weight $32-2k$ and index $6$ with a character. Hence, $32-2k\geq 4$. But when $2k=28$, the Jacobi form $\eta^{64}\phi$ has singular weight, which yields that $\eta^{64}\phi$ has a Fourier expansion of the form
$$
q^{8/3}\left(\sum_{32'}-\sum_{32{''}} \right)+O(q^{11/3}).
$$
This contradicts the above argument \textbf{(II)}. Thus, we have $-2k\geq -26$.

\textbf{(IV)} Assume that $\phi \in  J_{-2k,E_8,6}^{\w ,W(E_8)}$ has no Fourier coefficients $\sum_{36'}$, $\sum_{32'}$ and $\sum_{32{''}}$ in its $q^0$-term. Then the function $\eta^{60}\phi$ is a $W(E_8)$-invariant holomorphic Jacobi form of weight $30-2k$ and index $6$ with a character. Hence, $30-2k\geq 4$ i.e. $-2k\geq -26$. Similarly, when $2k=26$, the Jacobi form $\eta^{60}\phi$ has singular weight, which implies that $\eta^{60}\phi$ has a Fourier expansion of the form
$$
q^{5/2}\sum_{30'}+O(q^{7/2}).
$$
This is impossible because $\phi(\tau,0)=0$. Thus, we have $-2k\geq -24$.

\textbf{(V)} Assume that $\phi \in  J_{-2k,E_8,6}^{\w ,W(E_8)}$ has no Fourier coefficients $\sum_{36'}$, $\sum_{32'}$, $\sum_{32{''}}$ and $\sum_{30'}$ in its $q^0$-term. Then the function $\eta^{56}\phi$ is a $W(E_8)$-invariant holomorphic Jacobi form of weight $28-2k$ and index $6$ with a character. Hence, $-2k\geq -24$. Similarly, when $2k=24$, the Jacobi form $\eta^{56}\phi$ has singular weight, which forces that $\eta^{56}\phi$ has a Fourier expansion of the form
$$
q^{7/3}\sum_{28'}+O(q^{10/3}).
$$
This is impossible because $\phi(\tau,0)=0$. Thus, we have $-2k\geq -22$.

Combining the above arguments together, we have

\begin{proposition}
\begin{align*}
&\dim J_{-2k,E_8,6}^{\w ,W(E_8)}= 0, \quad  \text{if} \quad -2k\leq -28, \\
&\dim J_{-26,E_8,6}^{\w ,W(E_8)}\leq 1,\\
&\dim J_{-24,E_8,6}^{\w ,W(E_8)}\leq 3.
\end{align*}
Moreover, if the $W(E_8)$-invariant weak Jacobi form of weight $-26$ and index $6$ exists, then its $q^0$-term has no Fourier coefficients $\sum_{36'}$ and $\sum_{32'}$ but must contain Fourier coefficient $\sum_{32''}$.
\end{proposition}

It is easy to continue the above discussions and prove that
\begin{align*}
&\dim J_{-22,E_8,6}^{\w ,W(E_8)}\leq 4,&
&\dim J_{-20,E_8,6}^{\w ,W(E_8)}\leq 6.&
\end{align*}

We construct two independent $W(E_8)$-invariant weak Jacobi forms of weight $-24$ and index $6$. From the embeddings of lattices $4A_2<E_8$ and $2D_4<E_8$, we see 
\begin{align*}
&E_8(6)<4A_2(2),& &E_8(6)<2D_4(3).&
\end{align*}
By Table \ref{tablewi}, there exist an $\Orth(A_2)$-invariant weak Jacobi form $\phi_{-6,A_2,2}$ of weight $-6$ and index $2$ for $A_2$ and an $\Orth(D_4)$-invariant weak Jacobi form $\phi_{-12,D_4,3}$ of weight $-12$ and index $3$ for $D_4$.   The function $\phi_{-6,A_2,2}$ can be constructed as
$$
\frac{\vartheta^2(\tau,z_1)\vartheta^2(\tau,z_1-z_2)\vartheta^2(\tau,z_1)}{\eta^{18}(\tau)}
$$
and its $q^0$-term is rather simple.
As a tensor product of four copies of $\phi_{-6,A_2,2}$, we construct a $W(E_8)$-invariant weak Jacobi form of weight $-24$ and index $6$:
\begin{equation}
\begin{split}
\varphi_{-24,6}=&240 -8 \sum_{2} + 24 \sum_{4}-24\sum_{6}-36\sum_{8}+120\sum_{10}-88\sum_{12}
-88\sum_{14}+198\sum_{16}\\
&-88\sum_{18}-88\sum_{20}+120\sum_{22}-36\sum_{24}
-24\sum_{26}+24\sum_{28'}-8\sum_{30'}+\sum_{32{''}}.
\end{split}
\end{equation}

The function $\phi_{-12,D_4,3}$ can be constructed as a linear combination of basic Jacobi forms $\phi_{-4,D_4,1}^3$ and $\phi_{-4,D_4,1}\psi_{-4,D_4,1}^2$, where $\phi_{-4,D_4,1}$ is the generator of $J_{-4,D_4,1}^{\w ,W(D_4)}$ invariant under the odd sign change (i.e. $z_1\mapsto -z_1$) and $\psi_{-4,D_4,1}$ is the generator of $J_{-4,D_4,1}^{\w ,W(D_4)}$ anti-invariant under the odd sign change. We refer to \cite{Ber99} for their constructions. The $q^0$-term of $\phi_{-12,D_4,3}$ is quite complicated but it is easy to see that it contains only one $\Orth(D_4)$-orbit of vectors of norm $18$. Thus, the tensor product of $\phi_{-12,D_4,3}$ gives a $W(E_8)$-invariant weak Jacobi form of weight $-24$ and index $6$ with leading Fourier coefficient $\sum_{36'}$ in its $q^0$-term. We note this function by $\psi_{-24,6}$.

If the unique $W(E_8)$-invariant weak Jacobi form $\varphi_{-26,6}$ of weight $-26$ and index $6$ exists, then it is possible to prove that the above inequalities of dimensions will be equalities and the generators can be constructed by applying the differential operators to $\varphi_{-26,6}$, $\varphi_{-24,6}$ and $\psi_{-24,6}$. In addition, the function $\varphi_{-4,2}\varphi_{-16,4}$ should be also a generator of weight $-20$.

\subsection{Further remarks}
We close this section with the remarks below.

\begin{remark}
A holomorphic function is called a $E_8$ Jacobi form if it only satisfies the last three conditions in Definition \ref{def}. The space of $W(E_8)$-invariant Jacobi forms is much smaller than the space of $E_8$ Jacobi forms. By \cite{FM}, the dimension of the space of $E_8$ holomorphic Jacobi forms of weight 4 and index 2 is 51. It follows that the dimension of the space of $E_8$ weak Jacobi forms of weight -8 and index 2 is 50. One such function is 
$$\frac{1}{\Delta(\tau)}\prod_{i=1}^4 \vartheta(\tau,z_{2i-1}+z_{2i})\vartheta(\tau,z_{2i-1}-z_{2i}) .$$
But we have proved that there is no non-zero $W(E_8)$-invariant weak Jacobi form of weight -8 and index 2. We refer to \cite{EK} for the dimensional formulas of the spaces of $E_8$ Jacobi forms of large weight.
\end{remark}

\begin{remark}
The methods described in this paper would be useful to study the ring of Jacobi forms for other lattices.  The lattice $A_4^*(5)$ is of great importance in \cite{GW17,GW18}. Using our method, it is easy to determine the space of weak Jacobi forms of index $1$ for the lattice $A_4^*(5)$ which are invariant under the integral orthogonal group of $A_4^*(5)$. The space $J_{*,A_4^*(5),1}^{\w ,\Orth(A_4)}$ is a free module over $M_*$ generated by six weak Jacobi forms of weights $-8$, $-6$, $-4$, $-4$, $-2$ and $0$.
\end{remark}

\begin{remark}
One question has not been answered in this paper is if the bigraded ring of $W(E_8)$-invariant weak Jacobi forms is finitely generated over $M_*$. This question is at present far from being solved. If this ring is finitely generated, then it will contain a lot of generators (more than $20$) and there are plenty of algebraic relations among generators. Moreover, there might be generators of index larger than $6$ because the generator of degree $30$ of the ring of $W(E_8)$-invariant polynomials does not appear in the leading term of Taylor expansion of any $W(E_8)$-invariant weak Jacobi form of index at most $6$ (see Remark \ref{remark:Wir}).  
\end{remark}

\section{Modular forms in ten variables}\label{Sec:6}
In this section we investigate modular forms for the orthogonal group $\Orth^+(2U\oplus E_8(-1))$. In \cite{HU}, Hashimoto and Ueda proved that the graded ring of modular
forms for $\Orth^+(2U\oplus E_8(-1))$ is a polynomial ring in modular forms of weights $4$, $10$, $12$, $16$, $18$, $22$, $24$, $28$, $30$, $36$, $42$.  The dimension of the space of modular forms of fixed weight can be computed by their results. In \cite{DKW}, the authors showed that one can choose the additive liftings of Jacobi-Eisenstein series as generators. We next give an upper bound of $\dim M_k(\Orth^+(2U\oplus E_8(-1)))$ based on our theory of $W(E_8)$-invariant Jacobi forms. Since \cite{HU} has never been published, the proof may contain some gaps and thus our result is not inessential.

Let $F$ be a modular form of weight $k$ with respect to $\Orth^+(2U\oplus E_8(-1))$ with the trivial character and $\mathfrak{z}=(z_1,\cdots, z_8)\in E_8\otimes\CC$. We consider its Fourier-Jacobi expansion
\begin{align*}
F(\tau,\mathfrak{z},\omega)&=\sum_{\substack{n,m\in \NN, \ell\in E_8\\2nm-(\ell,\ell)\geq 0}}a(n,\ell,m)\exp(2\pi i(n\tau+(\ell,\mathfrak{z})+m\omega))\\
&=\sum_{m=0}^{\infty}f_m(\tau,\mathfrak{z})\xi^m,
\end{align*}
where $\xi=\exp(2\pi i \omega)$.
Then $f_m(\tau,\mathfrak{z})$ is a $W(E_8)$-invariant holomorphic Jacobi form of weight $k$ and index $m$ and we have the following symmetry
$$
a(n,\ell,m)=a(m,\ell,n),\quad \forall (n,\ell,m)\in \NN \oplus E_8 \oplus \NN. 
$$
For $r \in \NN$, we set
$$M_k(\Orth^+(2,10))(\xi^r)=\{F\in M_k(\Orth^+(2U\oplus E_8(-1))) : f_m=0,\; m<r\}$$
and 
$$ J_{k,E_8,m}^{W(E_8)}(q^r)=\{f \in J_{k,E_8,m}^{W(E_8)}: f(\tau,\mathfrak{z})=O(q^r) \}.$$
We then have the following exact sequence 
$$ 0\longrightarrow M_k(\Orth^+(2,10))(\xi^{r+1})\longrightarrow  M_k(\Orth^+(2,10))(\xi^r)\stackrel{P_r}\longrightarrow J_{k,E_8,r}^{W(E_8)}(q^r),$$
where $r\geq 0$ and $P_r$ maps $F$ to $f_r$.
From this, we obtain the following estimation
$$ \dim M_k(\Orth^+(2,10))(\xi^{r})-\dim M_k(\Orth^+(2,10))(\xi^{r+1}) \leq \dim J_{k,E_8,r}^{W(E_8)}(q^r). $$
It is clear that $J_{k,E_8,r}^{W(E_8)}(q^r)=\{0\}$ for sufficiently large $r$, and then
$$ M_k(\Orth^+(2,10))(\xi^{r})=M_k(\Orth^+(2,10))(\xi^{r+1})=\cdots=\{0\}$$
for sufficiently large $r$. We thus deduce
\begin{equation}\label{es1}
\dim M_k(\Orth^+(2U\oplus E_8)) \leq \sum_{r=0}^{\infty} \dim J_{k,E_8,r}^{W(E_8)}(q^r)\\
\leq \sum_{r=0}^{\infty} \dim J_{k-12r,E_8,r}^{\w ,W(E_8)}.
\end{equation}

We use the pull-back from $W(E_8)$-invariant Jacobi forms to $W(E_7)$-invariant Jacobi forms to improve inequality (\ref{es1}).

\begin{proposition}\label{prop5}
Let us fix the model $E_7=\{v\in E_8: (v,w_8)=0 \}$. We have the following homomorphism
\begin{align*}
\Phi:\; J_{2k,E_8,m}^{\w ,W(E_8)}&\longrightarrow J_{2k,E_7,m}^{\w ,W(E_7)},\\
f(\tau,\mathfrak{z})&\longmapsto f(\tau,z_1,\cdots,z_6,z_7,-z_7). 
\end{align*}
If $\Phi(f)=0$, then we have $f/G^2 \in J_{2k+240,E_8,m-60}^{\w ,W(E_8)}$, where 
$$G(\tau,\mathfrak{z})=\prod_{u\in R_2^+(E_8)}\frac{\vartheta(\tau,(u,\mathfrak{z}))}{\eta^3(\tau)}$$
is a weak $E_8$ Jacobi form of weight $-120$ and index $30$ which is anti-invariant under $W(E_8)$.
The symbol $R_2^+(E_8)$ denotes the set of all positive roots of $E_8$.

From the generators of $W(E_7)$-invariant Jacobi forms in Table \ref{tablewi}, we conclude
\begin{equation}
J_{2k,E_8,m}^{\w ,W(E_8)}=\{0\}\quad \textit{if}\quad 2k< -5m.
\end{equation}
\end{proposition}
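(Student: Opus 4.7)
The plan is to establish both parts of the proposition by exploiting the embedding $E_7\hookrightarrow E_8$ as the orthogonal complement of the root $\alpha=e_7+e_8\in E_8$. The hyperplane $\{z_7+z_8=0\}$ is precisely $(E_7\otimes\CC)\subset (E_8\otimes\CC)$, and the stabilizer of $\alpha$ inside $W(E_8)$ is exactly $W(E_7)$. Restricting an $E_8$ Jacobi form via the substitution $(z_7,z_8)\mapsto(z_7,-z_7)$ therefore lands in the space of $W(E_7)$-invariant Jacobi forms: quasi-periodicity under $E_7\subset E_8$ and modularity in $\tau$ transfer unchanged, and Weyl invariance becomes invariance under the smaller group. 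Multiplicativity of the induced map $\Phi$ is automatic.

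For the divisibility claim under $\Phi(f)=0$, I would argue as follows. The vanishing of $f$ along $H_\alpha=\{(\alpha,\mathfrak{z})=0\}$ propagates by $W(E_8)$-invariance to every root hyperplane $H_u$ with $u\in R_2(E_8)$. Since the reflection $s_u\in W(E_8)$ fixes $H_u$ pointwise and sends $(u,\mathfrak{z})$ to its negative, the $W(E_8)$-invariance of $f$ forces even-order vanishing along $H_u$, hence of order $\geq 2$. Quasi-periodicity of $f$ propagates this to every translate of $H_u$ by $\ZZ\tau+\ZZ$, which is precisely the full divisor of $\vartheta(\tau,(u,\mathfrak{z}))$. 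Consequently $f$ is divisible, as a holomorphic Jacobi form, by $\prod_{u\in R_2^+(E_8)}\vartheta(\tau,(u,\mathfrak{z}))^2$, and absorbing the appropriate power of $\eta$ gives divisibility by $G^2$; the quotient is $W(E_8)$-invariant because $G^2$ is. The weight shift $+240$ comes from $|R_2^+(E_8)|=120$ copies of the weight $-1$ factor $\vartheta/\eta^3$; the index shift $-60$ comes from the identity $\sum_{u\in R_2^+(E_8)}(u,v)^2=30(v,v)$, which follows from the $W(E_8)$-equivariance of $\sum_{u\in R_2(E_8)}u\otimes u$ (forcing it to be a multiple of the identity) together with $|R_2(E_8)|=240$ and $\dim E_8=8$.

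For the final vanishing statement, I would read off from Wirthm\"uller's list that the weight-to-index ratios of the eight $E_7$-generators are $\{0,-2,-3,-4,-5,-4,-14/3,-9/2\}$, with minimum $-5$ attained by $\phi_{-10,2}$. Any monomial in the generators of total index $m$ therefore has weight $\geq -5m$, and multiplication by elements of $M_*$ only raises the weight, so every element of $J^{\w,W(E_7)}_{2k,E_7,m}$ satisfies $2k\geq -5m$. The hypothesis $2k<-5m$ thus forces $\Phi(f)=0$, so $f=G^2 g$ with $g\in J^{\w,W(E_8)}_{2k+240,E_8,m-60}$. An induction on $m$ now finishes: if $m<60$ the target index is negative and $g$ must vanish, forcing $f=0$; if $m\geq 60$ the hypothesis gives $2k+240<-5(m-60)-60<-5(m-60)$, so the inductive hypothesis yields $g=0$ and hence $f=0$, with the base case $m=0$ reducing to $M_{2k}=\{0\}$ for $2k<0$.

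The main obstacle will be justifying rigorously that the vanishing of $f$ along the single hyperplane $H_\alpha$ upgrades to divisibility by $G^2$ inside the ring of holomorphic Jacobi forms. Three ingredients must be combined carefully: (i) the even order of vanishing along each $H_u$, which uses $W(E_8)$-invariance under $s_u$; (ii) the propagation of this vanishing to the full quasi-periodic divisor of $\vartheta(\tau,(u,\mathfrak{z}))$; and (iii) the simultaneous divisibility by all $120$ theta-squares, which requires these zero divisors to be pairwise distinct (true because distinct positive roots cut out distinct hyperplanes). Once this divisibility is in hand, the transformation laws of $f/G^2$ reduce to bookkeeping on weights, indices, and the $W(E_8)$-invariance of $G^2$; and the inductive descent on $m$ works precisely because the shift $(+240,-60)$ strictly preserves the hypothesis $2k<-5m$.
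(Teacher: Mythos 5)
Your argument is correct and fills in exactly the route the paper intends (the paper states Proposition \ref{prop5} with only the embedded sketch and supplies no separate proof): restriction to the root hyperplane $(e_7+e_8)^{\perp}\cong E_7\otimes\CC$, transitivity of $W(E_8)$ on roots together with invariance under each reflection $s_u$ to force even-order vanishing along every translate of every root hyperplane and hence divisibility by $G^2$, the index computation via $\sum_{u\in R_2^+}(u,v)^2=30(v,v)$, and the descent $2k<-5m\Rightarrow 2k+240<-5(m-60)$ fed into Wirthm\"uller's $E_7$ table. The one detail worth adding is why $f/G^2$ is again a \emph{weak} form (Fourier support in $n\geq 0$); this holds because the $q^0$-term of $G^2$ is the square of the Weyl denominator, which is not identically zero, so the quotient stays bounded as $q\to 0$ for generic $\mathfrak{z}$.
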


As a consequence of the above proposition and (\ref{es1}), we deduce 
\begin{equation}\label{es2}
\dim M_{k}(\Orth^+(2U\oplus E_8(-1))) \leq \sum_{0\leq r \leq \frac{k}{7}}\dim J_{k-12r,E_8,r}^{\w ,W(E_8)}.
\end{equation}
We have $\dim M_{6}(\Orth^+(2U\oplus E_8(-1)))=0$ because $J_{6,E_8,1}^{W(E_8)}=\{0\}$.

By inequality \eqref{es2}, we get upper bounds of $\dim M_{2k}(\Orth^+(2U\oplus E_8(-1)))$. By \cite[Corollary 1.3]{HU}, we calculate the exact values of dimension. We list these datum in Table \ref{tabledim}.

\begin{table}[ht]
\caption{Dimension of orthogonal modular forms in ten variables}\label{tabledim}
\renewcommand\arraystretch{1.5}
\noindent\[
\begin{array}{|c|c|c|c|c|c|c|c|c|c|c|c|c|c|c|c|c|c|c|c|c|c|c|c|c|}
\hline 
\text{weight} & 4 & 6 & 8 & 10 & 12 & 14 & 16 & 18 & 20 & 22 & 24 & 26 &28 &30& 32& 34& 36& 38& 40& 42\\ 
\hline 
\text{bound}  & 1 & 0 & 1 & 1 & 2 & 1 & 3 & 2 & 4 & 4 & 6 & 5 & 9 & 8 & 12 & \textbf{13} & 17 & \textbf{17} & 24  & x  \\ 
\hline 
\text{dim.} & 1 & 0 & 1 & 1 & 2 & 1 & 3 & 2 & 4 & 4 & 6 & 5 & 9 & 8 & 12 & 12 & 17 & 16 & 24 & 23 \\
\hline 
\end{array}
\]
\end{table}
$$
x=23+\dim J_{-18,E_8,5}^{\w ,W(E_8)}.
$$
We see at once that the upper bound is equal to the exact dimension when the weight is less than $42$ and not equal to $34$ or $38$.

By means of the same technique, we can assert 
\begin{align*}
&\dim M_{4}(\Orth^+(2U\oplus E_8(-2)))=1,& &\dim M_{6}(\Orth^+(2U\oplus E_8(-2)))=1.&
\end{align*}
These two modular forms can be constructed as additive liftings of holomorphic Jacobi forms $A_2$ and $B_2$, respectively.

At the end of this section, we construct two reflective modular forms. Following \cite[Theorem 4.2]{G18}, we construct Borcherds products from weakly holomorphic Jacobi forms of weight $0$. It is easy to check that
\begin{align*}
&\phi_{0,2}=\frac{E_4^2A_2}{\Delta}+E_4\varphi_{-4,2}-2\varphi_{0,2}=q^{-1}+24+O(q)
\end{align*}
is a $W(E_8)$-invariant weakly holomorphic Jacobi form of weight $0$ and index $2$.  Its Borcherds product $\Borch(\phi_{0,2})\in M_{12}(\Orth^+(2U\oplus E_8(-2)),\det)$ is a reflective modular form of weight $12$ with  complete $2$-reflective divisor for $\Orth^+(2U\oplus E_8(-2))$. Another construction of this modular form can be found in \cite{GN18}.

The Borcherds product of $\varphi_{0,2}$ is a strongly reflective modular form of weight $60$ with respect to $\Orth^+(2U\oplus E_8(-2))$. Its divisor is determined by $2$-roots of $E_8$. It is anti-invariant with respect to the action of the Weyl group $W(E_8)$ and it is not an additive lifting. Moreover, its first Fourier--Jacobi coefficient is given by the theta block $\prod_{r\in R^+_2(E_8)}\vartheta(\tau, (r,\mathfrak{z}))$.

\bigskip

\noindent
\textbf{Acknowledgements}
This paper is a part of the author's doctoral thesis. The author is greatly indebted to his supervisor Valery Gritsenko for suggesting this problem and for many stimulating conversations. The author is also grateful to Kazuhiro Sakai for valuable discussions. This work was supported by the Labex CEMPI (ANR-11-LABX-0007-01) in the University of Lille.  The author thanks Max Planck Institute for Mathematics in Bonn for its hospitality and financial support. The author also thanks the referees for helpful comments and suggestions that make this paper better.

\bibliographystyle{amspalpha}

\begin{thebibliography}{ZGH+18}

\bibitem[Bou60]{B} N. Bourbaki, \textit{Groupes et alg\`{e}bres de Lie.} Chapter 4,5 et 6. 

\bibitem[Ber99]{Ber99} M. Bertola, \textit{Jacobi groups, Jacobi forms and their applications.} PhD thesis, SISSA, Trieste, 1999.

\bibitem[Ber00a]{Ber00a} M. Bertola, \textit{Frobenius manifold structure on orbit space of Jacobi group; Part I.} Differential Geom. Appl. \textbf{13} (2000) 19--41.

\bibitem[Ber00b]{Ber00b} M. Bertola, \textit{Frobenius manifold structure on orbit space of Jacobi group; Part II.} Differential Geom. Appl. \textbf{13} (2000) 213--233.

\bibitem[BS78]{BS78}  J. H. Bernstein, O. V. Schwarzman, \textit{Chevalley's theorem for complex crystallographic Coxeter groups.} 
 Funct. Anal. Appl. \textbf{12}:4 (1978), 308--310.

\bibitem[BS06]{BS06} J. H. Bernstein, O. V. Schwarzman, \textit{Chevalley's theorem for the complex crystallographic groups.} J. Nonlinear Math. Phys. \textbf{13} (2006), no. 3, 323--351.

\bibitem[CK00]{CK} Y. Choie, H. Kim, \textit{Differential operators and Jacobi forms of several variables.} J. Number Theory {\bf 82} (2000) 140--163.

\bibitem[DKW19]{DKW} C. Dieckmann, A. Krieg, M. Woitalla, \textit{The graded ring of modular forms on the Cayley half-space of degree two.} Ramanujan J. \textbf{48} (2019) 385--398.  

\bibitem[Dub96]{Dub96} B. Dubrovin, \textit{Geometry of $2D$ topological field theories.}  Integrable Systems and Quantum Groups, (Montecatini Terme, 1993), Lecture Notes in Math. \textbf{1620}, Springer, Berlin, 1996, 120--384.


\bibitem[Dub98]{Dub98} B. Dubrovin, \textit{Differential geometry of the space of orbits of a Coxeter group.} Surv. Diff. Geom. \textbf{4} (1999) 181--211.


\bibitem[EK94]{EK} M. Eie, A. Krieg, \textit{The theory of Jacobi forms over the Cayley numbers.}  Trans. Amer. Math. Soc. \textbf{342} (2): 793--805, 1994.

\bibitem[EZ85]{EZ} M. Eichler, D.~Zagier, \textit{The Theory of Jacobi
Forms.} Progress in Mathematics, vol. {\bf 55}. Birkh\"auser, Boston,
Mass., 1985.

\bibitem[FM07]{FM} E. Freitag, R. Salvati Manni, \textit{Modular forms for the even unimodular lattice of signature $(2,10)$.} J. Algebraic Geom. \textbf{16}, 753--791, 2007.

\bibitem[Gri91]{G} V. Gritsenko, \textit{Jacobi functions of n variables.} J. Soviet Math. \textbf{53} (1991) 243--252.

\bibitem[Gri94]{G94} V. Gritsenko, \textit{Modular forms and moduli spaces of Abelian and K3 surfaces.} Algerba i Analyz \textbf{6} (1994), 65--102;
English translation: St. Petersburg Math. J. \textbf{6} (1995), 1179--1208. 

\bibitem[Gri18]{G18} V. Gritsenko, \textit{Reflective modular forms and their applications.} Russian Math. Surveys \textbf{73}:5 (2018),  797--864.

\bibitem[GN98]{GN} V. Gritsenko, V. Nikulin,  
\emph{Automorphic forms and Lorentzian Kac--Moody algebras. Part II.}  Internat. J. Math. \textbf{9} (1998), 201--275. 

\bibitem[GN18]{GN18} V. Gritsenko, V. Nikulin, \textit{Lorentzian Kac-Moody algebras with Weyl groups of $2$-reflections.} Proc. Lond. Math. Soc. (3) \textbf{116} (2018), no.3, 485--533.

\bibitem[GW17]{GW17} V. Gritsenko, H. Wang, \textit{Conjecture on theta-blocks of order $1$.} Russian Math. Surveys \textbf{72}:5 (2017), 968--970.

\bibitem[GW18]{GW18} V. Gritsenko, H. Wang, \textit{Theta block conjecture for paramodular forms of weight $2$.} Proc. Amer. Math. Soc. \textbf{148} (2020), 1863--1878.


\bibitem[HU14]{HU} H. Hashimoto,  K. Ueda, \textit{The ring of modular forms for the even unimodular lattice of signature $(2,10)$.} Preprint 2014, arXiv:1406.0332.

\bibitem[Lor05]{L} M. Lorenz, \textit{Multiplicative invariant theory.} Invariant Theory and Algebraic Transformation Groups, VI, Encyclopaedia Math. Sci., vol. {\bf 135}, Springer-Verlag, Berlin (2005).

\bibitem[Loo76]{Loo76} E. Looijenga, \textit{Root systems and elliptic curves.} Invent. Math. \textbf{38} (1976), 17--32.

\bibitem[Loo80]{Loo80} E. Looijenga, \textit{Invariant theory for generalized root systems.} Invent. Math. \textbf{61} (1980), 1--32.

\bibitem[Moh02]{M} K. Mohri, \textit{Exceptional String: Instanton Expansions
and Seiberg-Witten Curve.} Rev. Math. Phys. \textbf{14} (2002) 913--975.

\bibitem[MNV+98]{MNVW} J. A. Minahan, D. Nemeschansky, C. Vafa, and N. P. Warner, \textit{E-Strings and $N = 4$ Topological Yang-Mills Theories.} Nucl. Phys. \textbf{B527} (1998), 581--623.

\bibitem[Mum83]{Mu} D. Mumford, \textit{Tata lectures on theta I.} Progress in Mathematics, vol. {\bf 28}, Birkh\"{a}user, Boston, Basel, Mass., 1983.

\bibitem[OP19]{OP} G. Oberdieck, A. Pixton, \textit{Gromov-Witten theory of elliptic fibrations: Jacobi forms and holomorphic anomaly equations.}  Geom. Topol. \textbf{23}:3 (2019), 1415--1489.

\bibitem[Sai90]{Sai90} K. Saito, \textit{Extended affine root systems. II. Flat invariants.} Publ. Res. Inst. Math. Sci. \textbf{26} (1990), 15--78. 

\bibitem[Sak17]{Sa1} K. Sakai, \textit{Topological string amplitudes for the local $\frac{1}{2}K3$ surface.} PTEP. Prog. Theor. Exp. Phys. 2017, no. $3$.

\bibitem[Sak19]{Sa2} K. Sakai, \textit{$E_n$ Jacobi forms and Seiberg--Witten curves.}  Commun. Number Theory Phys. \textbf{13} (2019), no. 1, 53--80.

\bibitem[Sat98]{Sat98} I. Satake, \textit{Flat structure for the simple elliptic singularity of type $\widetilde{E_6}$ and Jacobi form.} Proc. Japan Acad. Ser. A Math. Sci. \textbf{69} (1993), 247--251.


\bibitem[Sch06]{Sch1} N. R. Scheithauer, \textit{On the classification of automorphic products and generalized Kac-Moody algebras.} Invent. Math. \textbf{164} (2006), 641--678.

\bibitem[Sch15]{Sch2} N. R. Scheithauer, \textit{Some constructions of modular forms for the Weil representation of $\SL_2(\ZZ)$.} Nagoya Math. J. \textbf{220}(2015), 1--43.

\bibitem[SC98]{SC} N. J. A. Sloane, J. H. Conway, \textit{Sphere Packings, Lattices and Groups,} volume 21. Spring, New York, Berlin, Heidelberg, London, Paris, Tokyo, 1998.

\bibitem[Wir92]{Wi} K. Wirthm\"{u}ller, {\it Root systems and Jacobi forms.} Compos. Math. {\bf 82} (1992) 293--354.

\bibitem[ZGH+18]{ZGHPKL} M. Zotto, J.Gu, M. Huang, A. Poor, A. Klemm, G. Lockhart, \textit{Topological Strings on Singular Elliptic Calabi-Yau $3$-folds and Minimal $6d$ SCFTs.} J. High Energ. Phys. (2018) 2018: 156. 
\end{thebibliography}

\end{document}